\theoremstyle{plain}
\newtheorem{theorem}{Theorem}
\newtheorem{proposition}{Proposition}
\newtheorem{lemma}{Lemma}
\newtheorem{remark}{Remark}
\newtheorem{corollary}{Corollary}
\newtheorem{definition}{Definition}
\numberwithin{equation}{section}
\newcommand{\op}{\operatorname}
\def \fh {{\mathfrak h}}
\def \fa {{\mathfrak a}}
\def \fN {{\mathfrak N}}
\def \QQ {{\mathbb Q}}
\def \RR {{\mathbb R}}
\def \ZZ {{\mathbb Z}}
\def \NN {{\mathbb N}}
\def\CC {{\mathbb C}}
\def \TT {{\mathbb T}}
\def \SS {{\mathbb S}}
\def \cA {{\mathcal A}}
\def \cC {{\mathcal C}}
\def \cO {{\mathcal O}}
\def \cK {{\mathcal K}}
\def \cH {{\mathcal H}}
\def \cG {{\mathcal G}}
\def \cU {{\mathcal U}}
\def \cR {{\mathcal R}}
\def \cT {{\mathcal T}}
\def \cZ {{\mathcal Z}}
\def \cL {{\mathcal L}}
\def \cN {{\mathcal N}}
\def \cS {{\mathcal S}}
\def \cU {{\mathcal U}}
\def\-{{\setminus}}
\def\Hei{{\rm Heis}}
\begin{document}

\title[Algebraic Anosov actions of nilpotent Lie groups]
      {Algebraic Anosov actions of nilpotent Lie groups}

\author{Thierry Barbot}
\author{Carlos Maquera}
\thanks{The authors would like to thank FAPESP for the partial financial
support. Grants  2009/06328-2, 2009/13882-6 and 2008/02841-4 }

\keywords{Algebraic Anosov action, Cartan subalgebra}

\subjclass[2010]{Primary: 37D40; secondary: 37C85, 22AXX, 22F30}

\date{\today}

\address{Thierry Barbot\\ Universit\'e d'Avignon et des pays de Vaucluse\\
LANLG, Facult\'e des Sciences\\
33 rue Louis Pasteur\\
84000 Avignon, France.}

\email{thierry.barbot@univ-avignon.fr}

\address{Carlos Maquera\\ Universidade de
S{\~a}o Paulo - S{\~a}o Carlos \\Instituto de ci{\^e}ncias
matem{\'a}ticas e de Computa\c{c}{\~a}o\\
Av. do Trabalhador S{\~a}o-Carlense 400 \\
13560-970 S{\~a}o Carlos, SP\\
Brazil}

 \email{cmaquera@icmc.usp.br}

 \begin{abstract}
 In this paper we classify algebraic Anosov actions of nilpotent Lie groups on closed manifolds, extending the previous results by P. Tomter (\cite{tomter1, tomter2}).
 We show that they are all nil-suspensions over either suspensions of Anosov actions of $\ZZ^k$ on nilmanifolds, or (modified) Weyl chamber actions. We check 
 the validity of the generalized Verjovsky conjecture in this algebraic context. We also point out an intimate relation between algebraic Anosov actions and Cartan 
 subalgebras in general real Lie groups.
 \end{abstract}

 \maketitle

 \medskip

 \thispagestyle{empty}

\tableofcontents

 \section*{Introduction}

 A locally free action $\phi$ of a group $H$ on a closed manifold $M$ is said to be \textit{Anosov} if there exists  $a\in H$ such that
 $g:=\phi(a,\cdot)$ is normally hyperbolically with respect to the orbit
 foliation. As for Anosov flows, there exists a continuous $Dg$-invariant  splitting of the tangent
 bundle
  $$
  TM=E^{ss}\oplus T\cO \oplus E^{uu}
  $$ such that
  $$
  \begin{array}{cc}
   \|Dg^n|_{E^{ss}}\|\leq Ce^{-\lambda n} & \forall n>0 \\
   \|Dg^n|_{E^{uu}}\|\leq Ce^{\lambda n} & \forall n<0
  \end{array},
  $$
  where $T\cO$ denotes  the $k$-dimensional subbundle of $TM$ that is tangent to the orbits of
  $\phi.$

The most natural examples of Anosov actions  come from algebra: let $G$ be a connected Lie group, $K$ a compact subgroup
of $G$, $\Gamma \subset G$ a torsion-free uniform lattice, and $\cH$ a subalgebra of the Lie algebra $\cG$ of $G$ contained in the normalizer
of the Lie algebra $\cK$ tangent to $K$ . Elements of $\cH$
can be seen as left invariant vector fields on $G$, hence induce an action of $H$ on $\Gamma\backslash G/K$, where $H$ is the simply connected Lie group with
Lie algebra isomorphic to $\cH$ (the immersion $H \looparrowright G$ is not assumed to be an embedding).
The actions defined in this way are called \textit{algebraic}. More precisely, our terminology will be to call $(G, K, \Gamma, \cH)$ an \textit{algebraic action}.
The necessary and sufficient condition for this action to be Anosov is easy
to infer (Proposition~\ref{pro:alganosov}):  for some element $\fh_0$ of $\cH$, the action induced by ${\rm ad}(\fh_0)$ on $\cG/(\cH \oplus \cK)$ must be hyperbolic,
ie. all the eigenvalues must have a non-vanishing real part.
In this paper, we will only consider the case where $H$ is nilpotent, with sometimes a special attention to the case $H=\RR^k$ with $k\geq1$.

There are essentially two families of algebraic Anosov actions of $\RR^k$:

-- \textit{Suspensions of an action of $\ZZ^k$ by automorphisms of nilmanifolds:} such a suspension is Anosov as soon as one of the automorphisms
is an Anosov diffeomorphism (\S~\ref{sub:exsus});

-- \textit{Weyl chamber actions:} when $G$ is a real semisimple Lie group of $\RR$-rank $k$, the centralizer of a (real split) Cartan subspace $\mathfrak a$
is a sum $\mathfrak a \oplus \cK$ where $\cK$ is tangent to a compact subgroup $K$. Then the right action of $\mathfrak a$ on $\Gamma\backslash G/K$
is Anosov (\S~\ref{sub:exweyl}). Actually, this phenomenon is notoriously at the very basis of the representation theory of semisimple Lie groups.

The two previous constructions can be combined: consider
a Weyl chamber Anosov action $(\Gamma\backslash G/K, \mathfrak a)$, and a nilmanifold $\Lambda\backslash N$. Any representation $\rho: G \to {\rm Aut}(N)$
such that $\rho(\Gamma)$ preserves $\Lambda$ defines a flat bundle over $\Gamma\backslash G/K$; the horizontal lift of the $\mathfrak a$-action
is still Anosov as soon as some $\rho(\gamma)$ induces an Anosov automorphism of $\Lambda\backslash N$. We get an algebraic Anosov action
on $(\Lambda \rtimes_\rho \Gamma)\backslash(N \rtimes_\rho G)/K$.
These examples can still be deformed, by modifying the flat bundle structure, ie. by replacing $\Lambda \rtimes_\rho \Gamma$
by any other lattice in $N \rtimes_\rho G$.  For a more detailed discussion, see \S~\ref{sub:exsusweyl}.

This procedure can be generalized in the case where $(G, K, \Gamma, \cH)$ is any algebraic Anosov action,
leading to the notion of \textit{nil-suspension over $(G, K, \Gamma, \cH)$} (Definition~\ref{def:nilsus}): $(\hat{G}, \hat{K}, \hat{\Gamma}, \hat{\cH})$ is a nil-suspension of
$(G, K, \Gamma, \cH)$ if there is an algebraic map $\hat{\Gamma}\backslash\hat{G}/\hat{K} \to \Gamma\backslash{G}/K$ which is a flat bundle admitting as fibers nilmanifolds such that $\cH$ is the projection of $\hat{\cH}$ - but
the fibers are non necessarily transverse to $\hat{\cH}$, which has a component tangent to the fibers. When this component tangent to the fiber is trivial, the nil-suspension is called \textit{hyperbolic} (Definition~\ref{def:hyperbolic}).


P. Tomter gave in \cite{tomter1, tomter2} the complete list of algebraic Anosov flows, ie. algebraic Anosov actions in the case where the acting nilpotent Lie group is simply $\RR$. The analysis in somewhat simplified 
by the obvious fact that a nil-suspension over an algebraic Anosov flow which is still a flow is necessarily hyperbolic. The present
paper includes a new proof in this case, somewhat simpler than in \cite{tomter1}:
\medskip

\noindent\textbf{Theorem \ref{thm:tomter}.}
\textit{Let $(G, K, \Gamma, \RR)$ be an algebraic Anosov flow. Then, $(G, K, \Gamma, \RR)$ is commensurable to either
the suspension of an Anosov automorphism of a nilmanifold, or to a hyperbolic nil-suspension over the geodesic flow
of a locally symmetric space of real rank one.}
\medskip


In this paper, we extend this theorem to the case of any nilpotent Lie group $H$. 
It appears that in this case we get many more (interesting) examples, including the Weyl chamber actions.
Let us first consider the case when $G$ is solvable:

\medskip
\noindent\textbf{Theorem \ref{th:solvable}.}
\textit{Every algebraic Anosov action $(G, K, \Gamma, \cH)$ where $G$ is a solvable Lie group is
commensurable to a nil-suspension over the suspension of an Anosov $\ZZ^{p}$-action on the torus ($p \in \NN^*$).
In particular, up to commensurability, the compact Lie group $K$ is finite (hence trivial if connected).}
\medskip

We then consider the semisimple case.  One can easily produce new examples, starting from a Weyl chamber action $(\Gamma\backslash G/K, \mathfrak a)$:
 it may happen that $\mathfrak a \oplus \cK$ admits another splitting $\cH \oplus \cK'$ where $\cH$ is an abelian subalgebra commuting with the compact Lie
 algebra $\cK'$. Then $(\Gamma\backslash G/K', \cH)$ is still algebraic Anosov. A typical example is the lifting of a geodesic flow of a hyperbolic $3$-manifold
 to the bundle of unit vectors normal to the flow: what we get is an Anosov action of $\RR^2$ (better to say, of $\RR \times \SS^1$). We call these examples
 \textit{modified Weyl chamber actions}. 
 
 Observe that $(\Gamma\backslash G/K, \mathfrak a)$ is naturally a flat bundle over $(\Gamma\backslash G/K', \cH)$, with fibers tangent to the lifted action; but it is not a nil-suspension because $G/K'$ is not a quotient of $G/K$
 by a normal subgroup of $G$. For more details, see \S~\ref{sub:elliptic}.
We will prove:

\medskip
\noindent\textbf{Theorem \ref{th:ss}.}
\textit{Every algebraic Anosov action $(G, K, \Gamma, \cH)$ where $G$ is a semisimple Lie group is commensurable to
a modified Weyl chamber action.}
\medskip

Finally, we are left with the general situation:

\medskip
\noindent\textbf{Theorem \ref{th:mixed}.}
\textit{Let $(G, K, \Gamma, \cH)$ be an algebraic Anosov action, where $G$ is not solvable and not semisimple.
Then:}

\textit{-- either $(G, K, \Gamma, \cH)$ is commensurable to an algebraic Anosov action on a solvable Lie group (it happens when $L$ is compact),}

\textit{-- either $(G, K, \Gamma, \cH)$ is commensurable to a central extension over a (modified) Weyl chamber action (it happens when $G$ is reductive),}

\textit{-- or $(G, K, \Gamma, \cH)$ is commensurable to a nil-suspension over an algebraic Anosov action which is commensurable to
a reductive algebraic Anosov action, ie. a central extension over a (modified) Weyl chamber action.}
\medskip

\textit{Central extensions} are nil-suspensions which are totally not hyperbolic: the fibers of the associated
flat bundle over the suspended action are tori included in the orbits of the lifted action (see Definition~\ref{def:normal}).

The statement in the last case may appear sophisticated. But it is not: it cannot be replaced by the simpler statement
\textit{"$(G, K, \Gamma, \cH)$ is commensurable to a nil-suspension over
a reductive algebraic Anosov action"}.  The point is that until now we didn't give the precise definition of "commensurability".
If two algebraic actions are commensurable in the sense of Definition~\ref{def:equiv}, then they are \textit{dynamically commensurable}, in the sense that they
admit finite coverings which are smoothly conjugate. However, our notion of commensurability (Definition~\ref{def:equiv}) is finer; it means that one can go
from one algebraic action to the other by simple modifications on $G$, $K$, $\Gamma$ and $\cH$,  which are listed in \S~\ref{sub:commen}, and which obviously preserve the dynamically commensurable class, but where the conjugacy
is "algebraic".
In summary, one can replace the last item in the statement of Theorem \ref{th:mixed} by "$(G, K, \Gamma, \cH)$ is \textit{dynamically} commensurable to a nil-suspension over
a reductive algebraic Anosov action", but we would get a slightly less precise result.
See Remark~\ref{rk:pastop} and the example given in \S~\ref{sub:nilcentral} for a detailed discussion on that question.

This phenomenon is actually one of the facts that prevent us to consider here in full detail the rigidity question related to these algebraic Anosov actions. A striking property of (certain) Anosov actions is the $C^\infty$-rigidity property:

\medskip
\noindent{\textbf{Theorem} [Katok-Spatzier, \cite{KatSpatz1}]
\textit{Let $(G, K, \Gamma, \RR^k)$ be a \textbf{standard} algebraic action by the abelian group $\RR^k$ with $k\geq2$. Then, smooth actions of $\RR^k$ on $\Gamma\backslash{G}/K$ which are $C^1$-close to
the standard algebraic action are smoothly conjugate to it.}
\medskip

Standard algebraic actions of $\RR^k$ are suspensions of Anosov actions of $\ZZ^k$ on nilmanifolds which satisfy a certain irreducibility property, Weyl chamber actions of semisimple Lie groups of real rank $\geq2$
such that the lattice $\Gamma$ is irreducible, and "twisted symetric space examples" (in the terminology of \cite{KatSpatz1}), i.e. the examples $(N \rtimes_\rho G, K, \Lambda \rtimes_\rho \Gamma, \cH)$ introduced
above where the representation $\rho: G \to {\rm Aut}(N)$ satisfies some irreducibility condition. Let us mention that the general expectation in the field is that
Anosov actions of $\RR^k$ satisfying a suitable  irreducibility property are standard, i.e. algebraic, and there are several recent results in this direction (\cite{kalininspatz, fishkalininspatz}).

It would be interesting to do a systematic study, determining among algebraic Anosov actions which one are $C^\infty$-rigid, including the case where $\cH$ is nilpotent but not necessarily abelian. However, we have no
new ideas to propose here on this matter, outside some obvious remarks
(for example, central extensions are certainly not $C^\infty$-rigid), and considerations which would be
merely a rewriting of the results in \cite{KatSpatz1, KatSpatz2}.

Our original concern was about Anosov actions of $\RR^k$, but we soon realized that the case of actions
of nilpotent Lie groups was similar, even more natural. Indeed, a traditional ingredient in the study
of general Lie algebras are Cartan subalgebras (CSA in short), ie. nilpotent subalgebras equal to their own
normalizers. It is a classical result that every Lie algebra admits a CSA, and CSA's are precisely Engel subalgebras, ie. subalgebras $\mathfrak H \subset \cG$ such that the adjoint action of $\mathfrak H$ on $\cG/{\mathfrak H}$ has no $0$-eigenvalue (cf. Lemma~\ref{le:engel}). Now, if $(G, K, \Gamma, \cH)$ is an algebraic Anosov action, $\cH$ is closely related to
CSA's in $\cG$: for example, when $\cG$ is solvable, then $\cH$ \textit{is} a CSA in $\cG$, and hence is unique up
to conjugacy in $\cG$.

When $\cG$ is not solvable, the relation between $\cH$ and $\mathfrak H$ is not so direct: in that situation,
there is a one-to-one correspondence between CSA's in $\cG$ and CSA's in the Levi factor $\cL$ of $\cG$;
there is a well defined notion of \textit{hyperbolic CSA} in $\cG$, which is unique up to conjugacy to $\cG$.
The point is that for algebraic Anosov actions which are \textit{simplified}, ie. such that the compact group
$K$ is semisimple, the sum $\cH \oplus \fa_0$, where $\fa_0$ is a maximal torus in $K$, is a hyperbolic CSA,
hence unique up to conjugacy in $\cG$. For a more detailed discussion, see \S~\ref{sub:csach}.

 \subsection*{Notations}
We denote Lie groups by latin letters $G$, $H$, ... and Lie algebras by gothic letters $\cG$, $\cH$, ... The adjoint actions
are denoted by ${\rm Ad}: G \to \cG$ and ${\rm ad}: \cG \to \cG$.
Given a morphism $f: G \to H$, we denote by $f_\ast: \cG \to \cH$ the associated Lie algebra morphism.
The torus of dimension $\ell$ is denoted by $\TT^\ell$.

\subsection*{Organization of the paper}
In the preliminary section \ref{sec.preli} we give the precise definitions of (algebraic) Anosov
actions and of commensurability. In section \ref{sec:example} we give a detailed description of
the fundamental examples (suspensions, modified Weyl chamber actions). We also define nil-suspensions and nil-extensions. Nil-suspensions which are nil-extensions are precisely central extensions. 
This section include a study of the fact
that any nil-suspension is a sequence of central extensions and hyperbolic nil-suspensions which cannot be permuted.
Section \ref{sec:classification} contains the proofs of the classification Theorems \ref{th:solvable}, \ref{th:ss} and \ref{th:mixed}. It starts by a preliminary subsection \ref{sub:prelim} containing general results, in particular, a link between the acting algebra $\cH$ and CSA's in $\cG$ (Proposition~\ref{pro:CSA}).
The following subsections \ref{sub:solvablecase}, \ref{sub:sscase}, \ref{sub:mixedcase} are
devoted to the proof of the classification case, in respectively the solvable case, the semisimple case, and
the general case.
In section~\ref{sec:conclusion} we show how to briefly recover Tomter's results through our own study;
we check that codimension one algebraic Anosov actions satisfies the \textit{generalized Verjovsky conjecture,}
namely that they are (dynamically) commensurable to nil-extensions of either the suspension of an Anosov action
of $\ZZ^k$ on the torus, or the geodesic flow of a hyperbolic surface. We then briefly develop the relation
between Anosov actions and Cartan subalgebras.
The last section \ref{sec:appendice} is an appendix, where we have collected some classical algebraic
facts necessary for our study.

\section{Preliminaries}
\label{sec.preli}

 \subsection{Algebraic Anosov actions}
 \label{sub.def}

Let $H$ be a simply connected nilpotent Lie group of dimension $k$, let $\cH$ be the Lie algebra of $H$, and let $M$ be a $C^\infty$ manifold of dimension $n+k$, endowed with a Riemannian metric $\|\cdot \|,$ and let $\phi$ be a locally free smooth action of
the simply connected Lie group $H$ on $M$. Let $T\cO$ the $k$-dimensional subbundle of $TM$
 that is tangent to the orbits of $\phi$.

 \begin{definition}\hspace{-.5cm}.
 \label{defi:defcodone}
 \begin{enumerate}
   \item We say that $a\in H$  is an \textbf{Anosov element} for $\phi$ if $g=\phi^a$
   acts normally hyperbolically with respect to the orbit foliation. That is, there exists real
  numbers $\lambda > 0,\ C > 0$ and a continuous $Dg$-invariant splitting of the tangent bundle
  $$
  TM=E_a^{ss}\oplus T\cO\oplus E_a^{uu}
  $$
  such that
  $$
  \begin{array}{cc}
   \|Dg^n|_{E_a^{ss}}\|\leq Ce^{-\lambda n} & \forall n>0 \\
   \|Dg^n|_{E_a^{uu}}\|\leq Ce^{\lambda n} & \forall n<0
  \end{array}
  $$
   \item Call $\phi$ an \textbf{Anosov action} if some $a\in H$ is
   an Anosov element for $\phi$.
  \end{enumerate}

 The action $\phi$ is a {\it codimension-one} Anosov action
 if $E^{uu}_a$ is one-dimensional for some Anosov element $a \in H$.
 \end{definition}

 Here, we are concerned with \textit{algebraic} Anosov actions of $H$.

\begin{definition}
\label{def:algebraicaction}
A (nilpotent) algebraic action is a quadruple $(G, K, \Gamma, \cH)$ where:

--  $G$ is a connected Lie group,

-- $K$ is a compact subgroup of $G$,

-- $\cH$ is a nilpotent Lie subalgebra of the Lie algebra $\cG$ of $G$ contained in the normalizer of $\cK$,
the Lie subalgebra tangent to $K$, and such that $\cH \cap \cK = \{ 0 \}$,

-- $\Gamma$ is a uniform lattice in $G$ acting freely on $G/K$.

If the group $K$ is trivial, we simply denote the action by $(G, \Gamma, \cH)$.
\end{definition}

The justification of this terminology is that, given such a data, we have a locally free right action of the Lie group $H$ associated
to $\cH$ on the quotient manifold $\Gamma\backslash{G}/K$. The algebraic action is said \textit{Anosov} if this right action
is ... Anosov!

\begin{proposition}
\label{pro:alganosov}
An algebraic action $(G, K, \Gamma, \cH)$ is Anosov if and only if there is an element $\fh_0$ of $\cH$
and an ${\rm ad}(\fh_0)$-invariant splitting $\mathcal{G}=\mathcal{U}\oplus \mathcal{S}\oplus \mathcal{K}\oplus
 \mathcal{H}$ of the Lie algebra $\cG$ of $G$ such that the eigenvalues of ${\rm ad} \fh_0|_{\mathcal{U}}$ (resp.  ${\rm ad} \fh_0|_{\mathcal{S}}$) have positive (resp. negative) real part.
\end{proposition}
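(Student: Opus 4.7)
The plan is to set up a dictionary between the dynamical Anosov property of $\phi^a$ and a spectral property of $\mathrm{ad}(\fh_0)$, mediated by the natural identification of the tangent bundle of $M=\Gamma\backslash G/K$ with the associated bundle $\Gamma\backslash G\times_K(\cG/\cK)$ via the left-invariant trivialization of $TG$. Under this identification, the tangent space to the $H$-orbit through any point is the image of $\cH$ in $\cG/\cK$ (which injects because $\cH\cap\cK=\{0\}$), and the elementary identity
\[g\exp(sX)\exp(\fh_0)=g\exp(\fh_0)\exp\bigl(s\,\mathrm{Ad}(\exp(-\fh_0))X\bigr)\]
shows that $D\phi^a$ acts on each fibre $\cG/\cK$ as a constant (basepoint-independent) linear operator, namely the adjoint action of $a^{\pm 1}$. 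The normally hyperbolic estimates therefore reduce to sign conditions on the real parts of the eigenvalues of $\mathrm{ad}(\fh_0)$ on $\cG/\cK$.

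For the ($\Leftarrow$) implication, I would take the splitting $\cG=\cU\oplus\cS\oplus\cK\oplus\cH$ and pass to the quotient to obtain an $\mathrm{ad}(\fh_0)$-invariant decomposition $\cG/\cK=\bar\cU\oplus\bar\cS\oplus\bar\cH$ whose summands carry eigenvalues of positive, negative and zero real part respectively. Defining $E^{uu},E^{ss},T\cO\subset TM$ from these subspaces via the associated bundle construction, the eigenvalue hypotheses on $\cU,\cS$ translate directly, through the formula for $D\phi^a$ just derived, into the required exponential contraction/expansion estimates. The $K$-equivariance needed for the subbundles to descend from the associated bundle follows from the normalizer condition $[\cH,\cK]\subset\cK$ together with a compactness-of-$K$ argument of the same flavor as the one used in the converse direction below.

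For the ($\Rightarrow$) implication, fix an Anosov element $a=\exp\fh_0$ and consider the generalized eigenspace decomposition $\cG=V_+\oplus V_0\oplus V_-$ of $\mathrm{ad}(\fh_0)$ sorted by the sign of the real part of the eigenvalue. Since $\fh_0\in\cH$ and $\cH$ is nilpotent, Engel's theorem gives that $\mathrm{ad}(\fh_0)|_\cH$ is nilpotent, so $\cH\subset V_0$. The crucial algebraic step is to show $\cK\subset V_0$: because $K$ is compact, the continuous one-parameter family $t\mapsto\mathrm{Ad}(\exp(t\fh_0))|_{K^\circ}$ of automorphisms of $K^\circ$ induces a path in the discrete group $\mathrm{Aut}(Z(K)^\circ)\cong GL(\dim Z(\cK),\ZZ)$, which must therefore be constant, forcing $\mathrm{ad}(\fh_0)|_{Z(\cK)}=0$; on the compact semisimple ideal $[\cK,\cK]$ every derivation is inner, and inner derivations of a compact Lie algebra are skew-adjoint with respect to a bi-invariant inner product, hence have purely imaginary eigenvalues. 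Through the dictionary of the first step, the Anosov hypothesis translates exactly into the statement that $\mathrm{ad}(\fh_0)$ on $\cG/(\cH\oplus\cK)$ has no eigenvalue of zero real part, i.e.\ $V_0\subset\cH\oplus\cK$; combined with the two inclusions just proved and with $\cH\cap\cK=\{0\}$, this yields $V_0=\cH\oplus\cK$. Setting $\cU:=V_+$ and $\cS:=V_-$ then produces the required splitting.

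The main obstacle I expect is the step $\cK\subset V_0$, which genuinely uses the compactness of $K$ (through discreteness of $\mathrm{Aut}$ of a torus, and through the structure of derivations of compact semisimple Lie algebras); once this is in place the rest of the proposition is pure linear algebra.
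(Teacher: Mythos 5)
Your proposal is correct and follows essentially the same route as the paper's proof: identify the tangent bundle of $\Gamma\backslash G/K$ with $\cG/\cK$ via the left-invariant trivialization, observe that the derivative of $\phi^a$ is there a constant linear map (the adjoint of $a^{\pm 1}$), and match the dynamical splitting with the spectral decomposition of ${\rm ad}(\fh_0)$, using compactness of $K$ to force purely imaginary eigenvalues on $\cK$ and nilpotency of $\cH$ to force a nilpotent restriction to $\cH$. The only organizational difference is that you pin down $V_0=\cH\oplus\cK$ by a growth-rate and dimension count and you spell out the ellipticity of ${\rm ad}(\fh_0)|_{\cK}$ (discreteness of the automorphism group of the central torus, skew-adjointness of inner derivations on the compact semisimple part --- an argument the paper only asserts here and carries out later in Lemma~\ref{le:commute}), whereas the paper instead invokes the uniqueness of the stable and unstable bundles and their invariance under left translations; both routes are sound.
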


\begin{remark}
Several authors include in the definition of Anosov actions the property that the center of the group contains an Anosov element (\cite{push, tavarez}).
This restriction is crucial in the case of general Lie groups in order to ensure the invariance by the entire group of the stable-unstable decomposition of a given Anosov element. However, this restriction, in the case of algebraic Anosov actions of nilpotent Lie groups is unnecessary for that purpose, as shown by
the following Lemma~\ref{le:niltrig}. 
\end{remark}

\begin{lemma}
\label{le:niltrig}
Let $N$ be nilpotent Lie subgroup of $\op{GL}(V)$, where $V$ is a real vector space of finite dimension. For every element $x$ of $N$ let $\op{Spec}(x)$ denote the set of (complex) eigenvalues
of $x$. For every element $\lambda$ of $\op{Spec}(x)$ let $V_\lambda(x)$ denote the generalized $\lambda$-eigenspace of $x$, ie. the maximal subspace of $V$ which is $x$-invariant and such that
the only eigenvalue of the restriction of $x$ to $V_\lambda(x)$ is $\lambda$. Then, the spectral decomposition $V = \oplus_{\lambda \in \op{Spec(x)}} V_\lambda$ is $N$-invariant.
\end{lemma}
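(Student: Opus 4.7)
The plan is to reduce the statement to a standard computation in the enveloping associative algebra of a nilpotent Lie algebra of endomorphisms. First, I would complexify: extending scalars from $\RR$ to $\CC$, the complex generalized eigenspaces $V_\lambda(x)_\CC$ decompose $V_\CC = V \otimes_\RR \CC$. Since $N$ acts by real operators and $V_\lambda(x) = V \cap (V_\lambda(x)_\CC \oplus V_{\bar\lambda}(x)_\CC)$ (with the convention that $V_\lambda = V_{\bar\lambda}$ when $\lambda \in \RR$), $N$-invariance of the complex decomposition implies $N$-invariance of the real one. So we may assume $V$ is complex.

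Next, I would replace the group $N$ by its Lie algebra. For connected $N$ (the case that arises in the paper's applications), $N$ is generated by $\exp(\mathfrak{n})$ where $\mathfrak{n} \subset \mathfrak{gl}(V)$ is the nilpotent Lie algebra of $N$, and a subspace is $\exp(Y)$-invariant iff it is $Y$-invariant (via the power series of $\exp$). Moreover, if $x = \exp X_0$ and $X_0 = D + N_0$ is the additive Jordan decomposition, then $\exp X_0 = \exp(D)\exp(N_0)$ is the multiplicative Jordan decomposition, so
\[
V_\mu(x) \;=\; \bigoplus_{\nu \,:\, e^\nu = \mu} V_\nu(X_0).
\]
Hence it suffices to prove the Lie-algebraic version: $Y V_\nu(X_0) \subseteq V_\nu(X_0)$ for all $X_0, Y \in \mathfrak{n}$ and $\nu \in \CC$.

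The heart of the argument is the identity
\[
A^s Y \;=\; \sum_{k=0}^{s} \binom{s}{k} ({\rm ad}\, A)^k(Y)\, A^{s-k}
\]
in the associative algebra $\op{End}(V)$, proved by an easy induction on $s$. Apply it with $A = X_0 - \nu\,\Id$, so that ${\rm ad}(A) = {\rm ad}(X_0)$ and $V_\nu(X_0) = \bigcup_r \ker A^r$. Since $\mathfrak{n}$ is nilpotent, $({\rm ad}\,X_0)^N(Y) = 0$ for some $N$, so only the first $N$ terms of the sum can survive. For $v \in V_\nu(X_0)$ with $A^r v = 0$, choosing $s \geq r+N$ makes every surviving term apply some $A^{s-k}$ with $s-k \geq r$ to $v$, which kills it; hence $A^s(Yv) = 0$ and $Yv \in V_\nu(X_0)$.

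The main obstacle I expect is really the bookkeeping in these reduction steps (complexification, relating eigenvalues of $\exp X_0$ to those of $X_0$, passing from group to Lie algebra); the core operator identity together with the nilpotency of $\mathfrak{n}$ is elementary. A possibly disconnected $N$ would require an additional argument — for instance, passing to the Zariski closure, which is a nilpotent linear algebraic group and splits as a central torus of semisimple elements times a unipotent normal subgroup, so that semisimple parts of elements of $N$ automatically commute with all of $N$ — but this refinement is not needed for the paper's applications.
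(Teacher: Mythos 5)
Your proof is correct. Note that the paper itself does not prove this lemma: it declares it classical and simply cites \cite{onishchik} (Theorem 9.1, p.~42), so what you have written is a complete, self-contained argument where the paper has only a reference --- and it is essentially the standard one, namely the invariance of generalized weight spaces under a nilpotent Lie algebra of operators, obtained from the binomial identity $A^sY=\sum_k\binom{s}{k}(\operatorname{ad}A)^k(Y)\,A^{s-k}$ together with the nilpotency of $\operatorname{ad}X_0$ on $\mathfrak n$. All your reduction steps are sound: the complexification, the passage from $x=\exp X_0$ to $X_0$ via the compatibility of the additive and multiplicative Jordan decompositions, and the identity $V_\mu(\exp X_0)=\bigoplus_{e^\nu=\mu}V_\nu(X_0)$. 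Two small points deserve to be made explicit rather than left implicit. First, writing every $x\in N$ as $\exp X_0$ uses the surjectivity of $\exp:\mathfrak n\to N$ for a \emph{connected} nilpotent Lie group (true: $\exp$ is a diffeomorphism on the universal cover and descends to the quotient by a discrete central subgroup). Second, your parenthetical ``a subspace is $\exp(Y)$-invariant iff it is $Y$-invariant'' is an overstatement --- the converse fails for a single $Y$ (e.g.\ $Y=\operatorname{diag}(0,2\pi i)$ has $\exp Y=\operatorname{Id}$) --- but your argument only uses the true direction ($Y$-invariance implies $\exp(Y)$-invariance), so nothing breaks. Your closing remark on the disconnected case is also apt; in the paper's only use of the lemma (Proposition~\ref{pro:alganosov}) the relevant group is connected, so the connected case suffices.
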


\begin{proof}
This Lemma is classical. See for example \cite[Theorem 9.1, p. 42]{onishchik}.
\end{proof}

\noindent\textit{Proof of Proposition~\ref{pro:alganosov}.} Let $(G, K, \Gamma, \cH)$ be an algebraic action. Select an ${\rm Ad}(K)$-invariant metric on $\cG$, and equip $G$ with the associated left invariant metric. It induces a
metric on $\Gamma\backslash G/K$. The tangent space at $K$ of $G/K$ is naturally identified with the quotient $\cG/\cK$. The adjoint action of every element $\fh$ of $\cH$ induces an action
on $\cG/\cK$, that we still denote by ${\rm ad}(\fh)$.

Assume that $(G, K, \Gamma, \cH)$ is Anosov, ie. that there is an Anosov element $\fh_0$ of $\cH$.
The stable and unstable bundles $E_{\fh_0}^{ss}$ and $E_{\fh_0}^{uu}$ lifts as $\cH$-invariant sub-bundles. Since the right action of $\cH$ commutes with the left action of $G$ on $G/K$,
these subbundles, which are unique, have to be preserved by left translations. Hence, they define an ${\rm ad}(\cH)$-invariant splitting $\cG/\cK = \bar{\cS} \oplus \bar{\cU} \oplus \bar{\cH}$, where $\bar{\cU}$ is expanded by
${\rm ad}(\fh_0)$, $\bar{\cS}$ contracted by ${\rm ad}(\fh_0)$, and such that the restriction of ${\rm ad}(\fh_0)$ to $\bar{\cH}$ is nilpotent. Since the restriction of ${\rm ad}(\fh_0)$ to $\cK$ has only purely imaginary eigenvalues,
the spectral decomposition of ${\rm ad}(\fh_0)$ on $\cG$ must be of the form $\cG = \mathcal{U}\oplus \mathcal{S}\oplus \mathcal{K}\oplus \mathcal{H}$ as required.

Inversely, assume that $\cG$ admits an ${\rm ad}(\fh_0)$-invariant splitting $\mathcal{G}=\mathcal{U}\oplus \mathcal{S}\oplus \mathcal{K}\oplus \mathcal{H}$ as stated in the Proposition.
According to Lemma~\ref{le:niltrig}, the splitting $\mathcal{G}=\mathcal{U}\oplus \mathcal{S}\oplus \mathcal{K}\oplus  \mathcal{H}$ is ${\rm ad}(\cH)$-invariant. The Proposition then follows easily.\qed

In the sequel, we will need to consider the universal covering. In doing so, we may unwrap $K$ in the universal covering of $G$
in a non-compact subgroup. Hence we need to introduce the following notion:

\begin{definition}
\label{def:algebraicaction2}
A generalized (nilpotent) algebraic action is a quadruple $(G, K, \Gamma, \cH)$ satisfying all the items of Definition~\ref{def:algebraicaction},
except the second item replaced by:

-- the intersection $K \cap \Gamma$ is a uniform lattice in $K$, contained in the center of $G$.
\end{definition}

\subsection{Commensurability}
\label{sub:commen}
Our first concern is the classification of Anosov actions of nilpotent Lie group $\cH$ up to finite coverings and topological conjugacy.
There are several obvious ways for two algebraic actions to be conjugated one to the other, or, more generally,
to be finitely covered by conjugated algebraic actions:

\begin{enumerate}
\item If $\Gamma'' = \Gamma \cap \Gamma'$ has finite index in $\Gamma$ and $\Gamma'$, then $(G, K, \Gamma'', \cH)$ is a finite covering of
$(G, K, \Gamma, \cH)$ and $(G, K, \Gamma', \cH)$.

\item If $K'$ is a finite index subgroup of $K$, then $(G, K', \Gamma, \cH)$ is a finite covering of $(G, K, \Gamma, \cH)$.

\item Let $G'$ be a Lie subgroup of $G$ containing $\Gamma$, such that $\cH$ is contained in $\cG'$ and such that $G$ is generated by the union $K \cup G'$: then the natural map $G'/(K \cap G') \to G/K$ is bijective, and $(G, K, \Gamma, \cH)$ is conjugate to $(G', K\cap G', \Gamma, \cH)$.

\item Let $\cH'$ be a Lie subalgebra of $\cH \oplus \cK$ supplementary to $\cK$, ie. the graph of a Lie morphism $\zeta: \cH \to \cK$.
Then $(G, K, \Gamma, \cH')$ is conjugate to $(G, K, \Gamma, \cH)$.

\item Assume that $K$ contains a subgroup $L$ which is normal in $G$. Consider the quotient map $p: G \to G/L$.
The image of $\cH$ under $p_\ast$ is a Lie subalgebra $\bar{\cH}$ in direct sum with $p_\ast(\cK)$. This map
induces a diffeomorphism $\bar{p}: \Gamma\backslash G/K \to p(\Gamma)\backslash(G/L)/(K/L)$ which
is a conjugacy between $(G/L, K/L, p(\Gamma), \bar{\cH})$ and $(G, K, \Gamma, \cH)$.


\item Finally, let $p: G \to G'$ be an epimorphism with discrete kernel contained in $\Gamma$. Then $p(K)$ is still compact, $p(\Gamma)$
is discrete in $G'$, and $p(\Gamma) \cap p(K)$ is trivial. The morphism $p$ induces a finite covering of $(G', p(K), p(\Gamma), p_\ast(\cH))$ by $(G, K, \Gamma, \cH)$.
\end {enumerate}

\begin{definition}
\label{def:equiv}
Two algebraic actions are \textbf{commensurable} if there is a sequence
of transformations using items $(1) - (6)$ going from one of them to the other.

If moreover the resulting actions are conjugate, then the actions are \textbf{equivalent}.
\end{definition}

\begin{remark}
If two algebraic actions are commensurable, then they admit finite coverings which are conjugate one to the other.
In particular, if one of them is Anosov, the other is Anosov too.
\end{remark}

\begin{remark}
\label{rk:connected}
Let $K_0$ denote the identity component of $K$. By item $(2)$, $(G, K, \Gamma, \cH)$ and $(G, K_0, \Gamma, \cH)$ are
commensurable. Hence, up commensurability, we can always assume, and we do, that $K$ is connected.
\end{remark}

\begin{remark}
\label{rk:noidealK}
Let $\cL$ be an ideal of $\cG$ contained in $\cK$ and maximal for this property. Then, by item $(5)$,
$(G/L, K/L, \bar{\Gamma}, \bar{\cH})$ and $(G, K, \Gamma, \cH)$ are commensurable, and there is no non-trivial
ideal of the Lie algebra of $G/L$ contained in $K/L$. Hence, up commensurability, we can always assume, and we do, that $\cK$ contains
no ideal of $\cG$.
\end{remark}

\begin{lemma}
\label{le:commute}
Up to equivalence we can assume that $K$ centralizes of $H$.
\end{lemma}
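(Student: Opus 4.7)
The plan is to find a Lie subalgebra $\cH' \subset \cH \oplus \cK$ supplementary to $\cK$ that commutes with $\cK$. Once it is in hand, item~(4) of Definition~\ref{def:equiv} gives an equivalence $(G,K,\Gamma,\cH) \sim (G,K,\Gamma,\cH')$. By Remark~\ref{rk:connected} I may assume $K$ connected, so this infinitesimal centralization will integrate to $K$ commuting with the (immersed) image of $H'$ in $G$, which is exactly what is required.

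The key step is to argue that the adjoint representation $\rho := \op{ad}|_\cH \colon \cH \to \op{Der}(\cK)$ takes values in $\op{Inn}(\cK)$. Write $\cK = \cZ(\cK) \oplus [\cK,\cK]$, the canonical decomposition of a compact Lie algebra. The action of $\cH$ on $\cK$ integrates to a conjugation action of the connected group $H$ on $K$, which preserves the identity component $T$ of $Z(K)$; since $T$ is a torus, $\op{Aut}(T) \cong \op{GL}(\dim T,\ZZ)$ is discrete, so any continuous homomorphism from the connected group $H$ into it must be trivial. Differentiating, $\rho$ vanishes on $\cZ(\cK)$. On the semisimple summand $[\cK,\cK]$, Whitehead's lemma gives that every derivation is inner, so there is a unique $\xi(X) \in [\cK,\cK]$ with $\op{ad}(\xi(X)) = \rho(X)$ on all of $\cK$; uniqueness together with $\rho$ being a Lie morphism forces $\xi \colon \cH \to [\cK,\cK]$ to be a Lie algebra morphism.

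I then set $\cH' := \{\, X - \xi(X) : X \in \cH \,\}$. For $Z \in \cK$ one has $[X - \xi(X), Z] = \rho(X)Z - [\xi(X),Z] = 0$, so $\cH'$ centralizes $\cK$. A short bracket expansion using this centralization gives $[X - \xi(X), Y - \xi(Y)] = [X,Y] - \xi([X,Y])$, so $\cH'$ is a Lie subalgebra of $\cH \oplus \cK$ (isomorphic to $\cH$ via projection) supplementary to $\cK$, and item~(4) of Definition~\ref{def:equiv} applies.

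The only delicate point is the triviality of the $H$-action on the central torus $T$ of $K$; it relies crucially on the \emph{compactness} of $K$ (which forces $\op{Aut}(T)$ to be discrete), not merely on the reductive nature of $\cK$. Without compactness, $\rho$ could genuinely act on $\cZ(\cK)$ and no lift $\xi$ would exist, blocking the construction of $\cH'$.
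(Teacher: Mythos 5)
Your proof is correct and follows essentially the same route as the paper: the decomposition $\cK = \cZ(\cK)\oplus[\cK,\cK]$ is exactly the paper's radical/Levi splitting of the compact algebra, the central torus is handled via the discreteness of its automorphism group, the semisimple summand via innerness of derivations, and the conclusion is reached by replacing $\cH$ with the graph of $-\xi$ and invoking item (4) of Definition~\ref{def:equiv}. Nothing further is needed.
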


\begin{proof} \noindent
Let $\cK = \cK^\ast \oplus \cH_1$ be the splitting in semisimple (the Levi factor) and abelian (the radical) subalgebras.
This splitting is unique, and $\cH_1$ lies in the center of $\cK$. The adjoint action
of $\cH$ preserves this splitting.

-- The Lie algebra $\cH_1$ is tangent to a compact abelian subgroup $H_1$
of $G$, a torus, which is $H$-invariant. But the automorphism group of a torus is discrete, isomorphic to the group ${\rm GL}(\ell, \ZZ)$
for some integer $\ell$. Hence the action of $H$ is trivial, and $\cH$ lies in the centralizer of $\cH_1$.

-- Now consider the induced adjoint action ${\rm ad}: \cH \to {\rm Aut}(\cK^\ast)$. Since $\cK^\ast$ is semisimple,
every derivation is an inner derivation, ie. ${\rm Aut}(\cK^\ast) \approx \cK^\ast$.
Therefore, there is a Lie algebra morphism $\nu: \cH \to \cK^\ast$  such that ${\rm ad}(h)$ and
${\rm ad}(\nu(h))$ have the same adjoint action on $\cK^\ast$ for every $h$ in $\cH$.
Replace $\cH$ by the graph $\cH_\nu \subset \cH \oplus \cK^\ast$
of $-\nu$: we obtain an equivalent algebraic action $(G, K, \Gamma, \cH_\nu)$ for which $\cH_\nu$ lies in the centralizer of $\cK$.
\end{proof}

\section{Examples}
\label{sec:example}

\subsection{The "irreducible" models}
In this section, we describe the main examples, from which all other algebraic Anosov actions can be constructed through nil-suspensions, which will be described
in the next section.

\subsubsection{Suspensions of  Anosov actions of $\mathbb{Z}^k$ on nilmanifolds}
\label{sub:exsus}
Let $N$ be a simply connected  nilpotent Lie group and  $\Lambda$ a torsion-free uniform lattice in $N$.
Let $\rho:\mathbb{R}^k\to {\rm Aut(N)}$ be a representation such that $\rho(a)$ is hyperbolic for some
$a\in \mathbb{R}^k$ and  $\rho(v)$ preserves $\Lambda$ for all $v$ in $\mathbb{Z}^k \subset \RR^k.$
Then $\rho$ induces an Anosov action of $\mathbb{Z}^k$ on the nilmanifold $\Lambda\backslash{N}$,
and the suspension is an Anosov action of $\mathbb{R}^k$ on a manifold which is a bundle over the
torus $\mathbb{T}^k$ with fiber a nilmanifold.

This example is algebraic: the group $G$ is the semi-direct product of $N$ by $\RR^k$ defined
by the morphism ${\rho}: \RR^k \rightarrow {\rm Aut(N)}$; the compact group $K$ is trivial, and
the lattice $\Gamma$ is a semidirect product of $\Lambda$ by $\ZZ^k$.

\begin{remark}
In these examples, there is an element $v$ of $\ZZ^k$ such that $\rho(v)$ induces an Anosov diffeomorphism on $\Lambda\backslash{N}$.
Observe that it is known that any Anosov diffeomorphism on a nilmanifold $\Lambda\backslash{N}$ is topologically
conjugate to a linear one, ie. induced by an automorphism of $N$ preserving the lattice $\Lambda$ (\cite{manning}).

It is actually conjectured that any Anosov diffeomorphism is topologically conjugate to a hyperbolic \textit{infra}-nilmanifold
automorphism (cf. for example \cite{franks}, page 63). These automorphisms are defined as follows: let $F$ be a finite group
of automorphisms of the nilpotent Lie group $N$, and let $\Delta$ be a lattice in the semi-direct product $G = N \rtimes F$.
Then $\Delta$ acts freely and properly on $N$, defining a \textit{infra-nilmanifold} $\Delta\backslash{N}$.
Let $f: G \to G$ be any automorphism preserving $\Delta$ and for which $f(N)=N$: it induces a diffeomorphism
on $\Delta\backslash{N}$  called a hyperbolic \textit{infra}-nilmanifold.
When $f_*: \mathcal{N} \to \mathcal{N}$ is hyperbolic, this diffeomorphism is Anosov.

Suspensions of algebraic actions of $\ZZ^k$ on an infra-nilmanifold by infra-nilmanifold automorphisms, including
an Anosov one, are examples of Anosov actions of $\RR^k$, but they are all commensurable to suspensions of
actions of $\ZZ^k$ by automorphisms on nilmanifolds.
\end{remark}

\subsubsection{Weyl chambers actions}
\label{sub:exweyl}
Let $G$ be a non-compact semisimple connected real Lie group \textit{with finite center,} with Lie algebra
$\cG$. Let $\Gamma$ be a torsion-free uniform lattice in $G$, and $\mathfrak a$ a
($\RR$-split) Cartan subspace of $G$ (see the Appendix, \S~\ref{sec:appendice}).
The centralizer of $\mathfrak a$ in $\cG$ is a
sum $\mathfrak a \oplus \cK$, where $\cK$ is the Lie algebra of a compact subgroup $K \subset G$. The right action of $\mathfrak a$
induces a $\RR^k$-action on the compact quotient $M =
\Gamma\backslash{G}/K$. The adjoint action of $\mathfrak a$ on $\cG$ preserves a splitting~:

$$
\cG = \mathcal{K}  \oplus \mathfrak a \oplus \sum_{\alpha \in \Sigma} \mathcal{G}^\alpha
$$

where every $\alpha$ (the {\em roots}) are linear forms
describing the restriction of ${\rm ad}(a)$ on $\cG^\alpha$.
The regular elements $a$ of $\mathfrak a$ for
which $\alpha(a) \neq 0$ corresponding to the Weyl chambers of the Lie group,
form a union of open convex cones of Anosov elements according to our definition.

This family of examples was mentioned in \cite{Imhof}. They are called \textit{Weyl chamber flows} in \cite{KatSpatz1}, but
we prefer to call them \textit{Weyl chamber actions,} since \textit{flow} should be reserved to the case $k=1$.

\subsection{Nil-extensions}
\label{sub:tellex}

\begin{definition}
\label{def:extension}
An action $(\hat{M}, \hat{\phi})$ of a simply connected nilpotent Lie group $\hat{H}$ is a \textbf{nil-extension} of an action $(M,\phi)$ of $H$
if there is an epimorphism $r: \hat{H} \to H$ and a bundle $p: \hat{M} \to M$ such that:

-- the $\phi$-orbits  of the kernel $H_0$ of $r$ are the fibers of $p$,

-- the kernel $H_0$ contains a lattice $\Lambda_0$ which is the isotropy group of the $H_0$-action at every point $\hat{x}$ of $\hat{M}$,

-- $p: \hat{M} \to M$  is a $r$-equivariant principal $\Lambda_0\backslash H_0$ bundle: for every $a$ in $\hat{H}$ we have:
$$p \circ \hat{\phi}^a = \phi^{r(a)} \circ p$$

\end{definition}

In particular, $p$ is a principal $\Lambda_0\backslash H_0$-bundle. The fibers of $p$ are contained in the orbits of $\hat{\phi}$. Observe that a nil-extension is never faithful since every isotropy group contains the lattice $\Lambda_0$.

Clearly, a nil-extension of $(M, \phi)$ is Anosov if and only if $(M, \phi)$ is Anosov. If $a$ is an Anosov element of $H$ such that $\phi^a$
fixes a point $x$ in $M$, any lift $\hat{a}$ of $a$ in $\hat{H}$ induces on the nilmanifold $p^{-1}(x)$ a diffeomorphism which is partially
hyperbolic, with central direction tangent to the restriction of $\cH$ to the fiber.

Nil-extensions do not introduce new essential dynamical feature
of the initial algebraic action they are constructed from. Hence we consider them as "trivial" deformations.

When the kernel $H_0$ is free abelian, the nil-extension is called a \textbf{abelian extension} (compare with \cite{brintop, bringroup}). One can define
nil-extensions as iterated abelian extensions (cf. \S~\ref{sub:niltor}). Be careful: the converse is not true. Indeed, the iteration of abelian extensions
may lead to a non-nilpotent solvable group.

In the next subsection, given an algebraic Anosov action  $(G, K, \Gamma, \cH)$ we see two different ways to produce abelian extensions which are still algebraic.

\subsubsection{Modified Weyl chamber actions}
\label{sub:elliptic}

Let $(G, K, \Gamma,\mathfrak a)$ be a Weyl chamber action. In some cases, the compact Lie group $K$ is not semisimple:
its Lie algebra $\cK$ splits as a sum of an abelian ideal $\cT_e$ (the radical, which is also the nilradical) and a semisimple (compact) ideal $\cK^\ast$.
Actually, $\mathfrak a \oplus \cT_e$ is the (nil)radical of $\mathfrak a \oplus \cK$, and $\cK^\ast$ a Levi factor. Observe that since Levi factors are unique up to conjugacy,
and since $\mathfrak a \oplus \cT_e$ commutes with $\cK^\ast$, it is the unique Levi factor.

Let $K'$ be a connected closed subgroup of $K$ containing the Levi factor $K^\ast$ of $K$: it is tangent to $\cK^\ast \oplus \cK_e$,
where $\cK_e$ is a subalgebra of $\cT_e$. Let $\cH_e$ be any subalgebra in $\cT_e$ supplementary to $\cK_e$. Let $\cH'$ be the subalgebra $\mathfrak a \oplus \cH_e$. Then $(G, K', \Gamma, \cH')$ is Anosov, since the action of any element
of $\cH' \subset \mathfrak a \oplus \cT_e$ admitting as $\mathfrak a$-component a regular element is still hyperbolic transversely to $\cH' \oplus (\cK_e \oplus \cK^\ast) = \mathfrak a \oplus \cK$.
For example, one can select $\cK_e = \{ 0 \}$: then $K' = K^\ast$ and $\cH' = \mathfrak a \oplus \cT_e$: this choice lead to the \textit{simplification of the Weyl chamber action}
(see Definition~\ref{defi:simplification}).

The projection $p: \Gamma\backslash{G}/K' \to \Gamma\backslash{G}/K$ is a principal bundle, admitting as fibers the right orbits of the torus tangent to $\cK_e$.
It follows that the algebraic action  $(G, K', \Gamma, \cH')$ is an abelian extension of $(G, K, \Gamma, \mathfrak a)$. Observe that the simplification
$(G, K^\ast, \Gamma, \mathfrak a \oplus \cT_e)$ is an abelian extension over any other modified Weyl chamber action obtained by other choices of $\cK_e$, $\cH$.

Such a construction is possible only when $\cT_e$ is not trivial, ie. when $K$ is not already semisimple and admits indeed a non-trivial radical.
It is the case for example when $G={\rm SO}(p,p+2)$: it has $\RR$-rank $p$, and
the compact part $K$ of the centralizer of a $\RR$-split Cartan subgroup is isomorphic to ${\rm SO}(2)$. Therefore, the right action of $A \times K$ on $\Gamma\backslash{G}$
is an abelian extension of the Weyl chamber action on $\Gamma\backslash{G}/K$.

More generally, when $G$ admits $\ell$ simple factors of the form above, then the radical of $K$ contains a torus $\TT^\ell$, product of the subgroups
${\rm SO}(2)$ in each of the $\ell$ factors.

As a particular case, consider the real Lie group $G = {\rm SO}(1,3)$. A Cartan subspace of ${\mathfrak{so}}(1,3)$ is the set of matrices of the form:

$$\left(\begin{array}{cccc}
    0 & t & 0 & 0 \\
    t & 0 & 0 & 0 \\
    0 & 0 & 0 & 0 \\
     0 & 0 & 0 & 0 \end{array}\right)$$

The compact part $K$ of the centralizer is the Lie group isomorphic to ${\rm SO}(2)$ comprising the matrices:

$$\left(\begin{array}{cccc}
    1 & 0 & 0 & 0 \\
    0 & 1 & 0 & 0 \\
    0 & 0 & \cos\theta & \sin\theta \\
     0 & 0 & -\sin\theta & \cos\theta \end{array}\right)$$

It is well-known that the Weyl chamber flow $(G = {\rm SO}(1,3),K , \Gamma, \mathfrak a)$ is the geodesic flow on the unit tangent bundle $\Gamma\backslash {\rm SO}(1,3)/K$ over the hyperbolic manifold
$\Gamma\backslash {\rm SO}(1,3)/{\rm SO}(3) \approx \Gamma\backslash{\mathbb H}^3$. The only way to modify this Weyl chamber flow is to include the entire $\cK$ in the new $\cH'$,
hence to consider the algebraic action $(G, \Gamma, \cH' = \cH \oplus \cK)$ with no more compact subgroup.

It is an action of $\RR \times \SS^1$ on $\Gamma\backslash{\rm SO}(1,3)$. Observe that this homogeneous manifold is the unit frame bundle over the hyperbolic
manifold $\Gamma\backslash{\mathbb H}^3$, with is naturally identified with the bundle over $\Gamma\backslash{\mathbb H}^3$, made of pairs of orthogonal unit tangent
vectors $(u,v)$ (there is a unique $w$ in $\Gamma\backslash T^1{\mathbb H}^3$ such that $(u, v, w)$ is an oriented basis).
The action of the first component $\RR$ corresponds to the motion of $u$ along the geodesic flow, pushing $v$ along the geodesic by parallel transport, whereas the action of
the second component $\SS^1$
fixes the first vector $u$ and rotates the second vector $v$ in the $2$-plane orthogonal to $v$.

This example cannot be generalized for geodesic flows in higher dimensions, since the compact part of the centralizer of a Cartan subspace in ${\rm SO}(1,n+1)$ is
isomorphic to ${\rm SO}(n)$, which is not abelian for $n>2$.

\begin{remark}
\label{rk:nocompact}
Every modified Weyl chamber action is equivalent to a modified Weyl chamber action associated to semisimple Lie group with
no compact simple factors.

Indeed, let $(G, K, \cH, \Gamma)$ be a modified Weyl chamber action, where $G$ is simply connected.
Let $C$ be the product of the all the \textbf{compact} simple factors of $G$. Then $G = G_{nc}.C$ where $G_{nc}$
is a semisimple Lie group with no compact factor. Since $C$ is compact, the projections of $\cS$, $\cU$ in its Lie algebra $\cC$
are trivial; hence $\cC$ is the sum of the projections of $\cH$ and $\cK$. Since $C$ is semisimple, its radical is trivial, therefore
the projection of $\cH$ is trivial too: $\cC$ is actually the projection of the semisimple part $K^\ast$ of $K$. Hence,
$K$ contains the compact part $C,$ which is normal in $G$. Now by item (5) in the definition of commensurability, $(G, K, \cH, \Gamma)$ is equivalent
to $(G/C \approx G_{nc}, K/C, p_\ast(\cH), p(\Gamma))$ where $p$ is the projection $p: G \rightarrow G/C$.

\end{remark}


\subsubsection{Central extensions}
\label{sub:extensioncentral}
In the previous section, abelian extensions were defined keeping the same Lie group $G$, but modifying the compact $K$. Here
we describe another procedure, keeping essentially the same compact group $K$, but enlarging the group $G$.
In the next section, this notion will be generalized to the notion of \textit{nil-suspensions.} 

\begin{definition}
\label{def:normal}
An algebraic action $(\hat{G}, \hat{K}, \hat{\Gamma}, \hat{\cH})$ is a \textbf{central extension} of $(G, K, \Gamma, \cH)$ if:
\begin{enumerate}
\item there is a central exact sequence
$$0 \rightarrow H_0 \rightarrow \hat{G} \rightarrow^p G \rightarrow 1$$
\item the Lie algebra $\cH_0$ of $H_0$ is contained in $\hat{\cH}$ (hence $\cH_0 \cap \hat{\cK} = \{ 0 \}$),
\item $p(\hat{K}) = K$,
\item $p(\hat{\Gamma}) = \Gamma$,
\item $p_\ast(\hat{\cH}) = \cH$.
\end{enumerate}
\end{definition}

\begin{lemma}
If $(\hat{G}, \hat{K}, \hat{\Gamma}, \hat{\cH})$ is a central extension of $(G, K, \Gamma, \cH)$
in the sense of Definition~\ref{def:normal}, then the associated action on $(\hat{\Gamma} \backslash \hat{G}/\hat{K})$ is
an abelian extension in the sense of Definition~\ref{def:extension} of the action on $\Gamma\backslash G/K$.
\end{lemma}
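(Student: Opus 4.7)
The plan is to read off the data of Definition~\ref{def:extension} directly from the central exact sequence $0\to H_0\to\hat{G}\stackrel{p}{\to}G\to 1$. First I would construct the epimorphism $r:\hat{H}\to H$: since $\hat{H}$ and $H$ are simply connected nilpotent, the surjection $p_\ast|_{\hat{\cH}}:\hat{\cH}\to\cH$ from item (5) lifts uniquely to such an $r$. Item (2) gives $\cH_0\subset\hat{\cH}$, so $\ker r_\ast=\hat{\cH}\cap\ker p_\ast$ coincides with $\cH_0$; consequently (the identity component of) $H_0$ is identified with $\ker r$ through the immersion $\hat{H}\looparrowright\hat{G}$. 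For clarity I would assume $H_0$ connected, a harmless reduction since the finite discrete group $\pi_0(H_0)$ can be absorbed into $\hat{\Gamma}$ and quotiented out. The centrality hypothesis then forces $H_0$ to be abelian, which is precisely what will make the nil-extension abelian rather than merely nilpotent.

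Next I would define the candidate bundle $\bar{p}:\hat{M}=\hat{\Gamma}\backslash\hat{G}/\hat{K}\to\Gamma\backslash G/K=M$ by $[\hat{g}]\mapsto[p(\hat{g})]$. Items (3) and (4) make this well defined, and the $r$-equivariance $\bar{p}\circ\hat{\phi}^{\hat{a}}=\phi^{r(\hat{a})}\circ\bar{p}$ is immediate, since $p$ intertwines the right translation on $\hat{G}$ by $\hat{a}\in\hat{H}$ with the right translation on $G$ by $r(\hat{a})\in H$.

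The crux is then to identify the fibers of $\bar{p}$ with $H_0$-orbits and to pin down the common isotropy. The fiber above $[g]$ is the set of classes $[\hat{g}\cdot h]$ with $h\in H_0$, for any fixed lift $\hat{g}$ of $g$: this follows from $H_0=\ker p$ together with centrality, which lets one rewrite $\hat{\Gamma}\hat{g}\hat{K}\cdot H_0=\hat{\Gamma}\hat{g}\cdot H_0\hat{K}$; this set is exactly the $H_0$-orbit of $[\hat{g}]$. Two such classes $[\hat{g}h_1]$ and $[\hat{g}h_2]$ coincide iff there exist $\hat{\gamma}\in\hat{\Gamma}$, $\hat{k}\in\hat{K}$ with $h_1h_2^{-1}=\hat{g}^{-1}\hat{\gamma}\hat{g}\cdot\hat{k}$; applying $p$ and using that $\Gamma$ acts freely on $G/K$ forces $p(\hat{\gamma})=1$, and then centrality of $H_0$ gives $h_1h_2^{-1}\in\Lambda_0:=(H_0\cap\hat{\Gamma})\cdot(H_0\cap\hat{K})$, a subgroup independent of $\hat{g}$. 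The intersection $H_0\cap\hat{K}$ is finite, because $\cH_0\cap\hat{\cK}=\{0\}$ (as $\cH_0\subset\hat{\cH}$ and $\hat{\cH}\cap\hat{\cK}=\{0\}$ by Definition~\ref{def:algebraicaction}); and $H_0\cap\hat{\Gamma}$ is a uniform lattice in $H_0$ by the standard theorem on lattices in short exact sequences, since $\hat{\Gamma}$ is uniform in $\hat{G}$ while $p(\hat{\Gamma})=\Gamma$ is uniform in $G$. Hence $\Lambda_0$ is a uniform lattice, and $\bar{p}$ is a principal $\Lambda_0\backslash H_0$-bundle, which completes the verification of Definition~\ref{def:extension}. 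The main obstacle I foresee is the bookkeeping around a possible disconnectedness of $\ker p$: if one does not reduce to the connected case, one must check that the extra components of $H_0$ are absorbed by $\Lambda_0$, i.e. that $\Lambda_0$ meets every connected component of $H_0$.
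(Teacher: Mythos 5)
Your proof is correct and follows essentially the same route as the paper's: you define $\bar p$ via $p$, identify its fibers with $H_0$-orbits using centrality, and compute the common isotropy as $\Lambda_0=(H_0\cap\hat\Gamma)\cdot(H_0\cap\hat K)$ by combining the freeness of the $\Gamma$-action on $G/K$ with the lattice-intersection theorem and the finiteness of $H_0\cap\hat K$. Your explicit construction of $r:\hat H\to H$ and the remark on $\pi_0(H_0)$ are harmless elaborations of points the paper leaves implicit.
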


\begin{proof}
Since $p(\hat{K})=K$, we have an induced
map $\bar{p}: \hat{\Gamma} \backslash \hat{G}/\hat{K} \to \Gamma\backslash G/K$. This map is equivariant
with respect to the (right) actions of $\hat{\cH}$ and $\cH$.

Moreover, let $\hat{\Gamma} \hat{g} \hat{K}$ and $\hat{\Gamma}\hat{g}'\hat{K}$ be two elements
of $\hat{\Gamma} \backslash \hat{G}/\hat{K}$ having the same image under $p$. Then, there is an element
$\gamma$ of $\Gamma$ and an element $k$ of $K$ such that:
$$p(\hat{g})=\gamma p(\hat{g}')k$$
Now there is a $\hat{\gamma}$ in $\hat{\Gamma}$ and a $\hat{k}$ in $\hat{K}$ such that $p(\hat{\gamma})=\gamma$ and $p(\hat{k})=k$. Then:
$$p(\hat{g})=p(\hat{\gamma}\hat{g}'\hat{k})$$
Since $H_0$ is the kernel of $p$, we get that $\hat{\Gamma} \hat{g} \hat{K}$ and $\hat{\Gamma}\hat{g}'\hat{K}$ lies in the same
orbit of the right action of $H_0$. Conversely, the fibers of $\bar{p}$ contains the orbits
of this action of $H_0$.

Let $\Lambda$ be the intersection $\hat{\Gamma} \cap H_0$. Since $\hat{\Gamma}$ is a uniform lattice, and
since $p(\hat{\Gamma})=\Gamma$, $\Lambda$ is a lattice in ${\rm Ker}(p) = H_0$.
Let $F$ be the intersection $H_0 \cap \hat{K}$. Since $H_0$ is tangent to $\hat{\cH}$ and since $\hat{K}$ is compact, $F$ is a finite group.
The product $F\Lambda$ is a lattice $\Lambda_0$  in $H_0$.
Since $H_0$ is in the center of $\hat{G}$, the (right) action of $H_0$ induces an action of $H_0/\Lambda_0$ on $\hat{\Gamma} \backslash \hat{G}/\hat{K}$.

Let $h$ be an element of $H_0$ fixing some element $\hat{\Gamma} \hat{g} \hat{K}$.
Then, there is an element $\hat{\gamma}$ of $\hat{\Gamma}$ and some element $\hat{k}$ of $\hat{K}$ such that $\hat{g}h = \hat{\gamma}\hat{g}\hat{k}$.
Taking the image under $p$ we get that ${p}(\hat{\gamma})$ fixes some element of $G/K$ (namely, $p(\hat{g})K$). Since the action of $\Gamma$ on $G/K$ is free,
$\hat{\gamma}$ lies in $\Lambda$. Therefore, it commutes with $\hat{g}$, and we have $h=\hat{\gamma}\hat{k}$. In particular,
$\hat{k}$ lies in $F$.
We finally get $h \in \Lambda_0$, hence the action of $H_0/F\Lambda_0 \approx \TT^\ell$ is free.

In summary, $\bar{p}$ is an equivariant map whose fibers are the orbits of $\TT^\ell$, hence an equivariant principal $\TT^\ell$-bundle.
The lemma is proved.
\end{proof}


\begin{remark}
\label{rk:product}
A very particular situation, where all the hypothesis in Definition~\ref{def:normal} are trivially satisfied, is the \textbf{product} case, when
$\hat{G}$ is simply the product $H_0 \times G$: it amounts to simply multiply the algebraic Anosov action by a torus $\TT^\ell$.
\end{remark}

\begin{remark}
\label{rk:l+l}
A central extension of a central extension is not necessarily a central extension. Indeed,
the kernel $H_0$ of the the first central extension does not necessarily lift in the biggest group.
\end{remark}

\subsubsection{Non-product central extensions}
\label{sub:starkov}
Here we provide a typical example, inspired by an example due to Starkov (cf. \cite{KatSpatz1}), of a non-product central extension:

Let $\Hei$ be the Heisenberg group (of dimension 3): there is a non-split central extension:
$$0 \to \RR \to \Hei \to \RR^2 \to 0$$
A convenient way to describe $\Hei$ is to define it as the product $\RR \times \RR^2$ equipped with the operation:
$$(t, u).(t', u') = (t+t'+\omega(u,u'), u+u')$$
where $\omega: \RR^2\times\RR^2 \to \RR$ is a symplectic form. Here, we will consider a symplectic form $\omega$ taking integer values
on $\ZZ^2 \subset \RR^2$.

Let $\Hei_\ZZ$ denotes the lattice of $\Hei$ made of integer elements of $\RR \times \RR^2$.
Then for every $A$ in ${\rm SL}(2,\ZZ)$ the map $A^*: \Hei \to \Hei$ defined by $A^*(t,u) = (t, A(u))$
is a morphism preserving $\Hei_\ZZ$ (observe that $A^*$ is trivial on the center of $\Hei$).

If $A$ is hyperbolic, then the induced diffeomorphism $\bar{A}^*: \Hei_\ZZ\backslash\Hei \to \Hei_\ZZ\backslash\Hei$ is
partially hyperbolic, with central direction tangent to the orbits of the center.
Let $(M_0, \phi_0)$ be the suspension of $\bar{A}^*$. The flow $\phi_0$ commutes with the (periodic) flow induced by the center of $\Hei$. These flows
altogether define a $\RR^2$-action on $M_0$ which is Anosov. It is algebraic, with trivial compact group $K$ and as group $G$
an extension of $\Hei$ by $\RR$. It is moreover a central extension of the suspension of $\bar{A}: \ZZ^2\backslash\RR^2 \to \ZZ^2\backslash\RR^2$.
This central extension is not a product, since (the $3$-step solvable) group $\Hei \rtimes_{A^*} \RR$ is not isomorphic to the ($2$-step solvable) product $\RR \times (\RR^2 \rtimes_A \RR)$.

\subsection{Nil-suspensions \textit{per se}}
\label{sub:exsusnil}
Let $(G, K, \Gamma, \cH)$ be an algebraic Anosov action. Consider an exact sequence:
\begin{equation}
\label{eq:nilgroup}
1 \longrightarrow N \longrightarrow \hat{G} \longrightarrow G \longrightarrow 1
\end{equation}
where $N$ is a nilpotent Lie group. This exact sequence implies an exact sequence:
\begin{equation}
\label{eq:nilalg}
0 \longrightarrow \cN \longrightarrow \hat{\cG} \longrightarrow \cG \longrightarrow 0
\end{equation}
where $\cN$ is the (nilpotent) Lie algebra of $N$. Observe that we don't require (\ref{eq:nilgroup}) and (\ref{eq:nilalg}) to be split.

Assume that $K$ lifts as a compact subgroup $\hat{K}$ in $\hat{G}$ 
whose tangent Lie subalgebra $\hat{\cK}$ satisfies $\hat{\cK} \cap \hat{\cN} = \{ 0 \}$.

Let $\hat{\Gamma}$ be any uniform lattice in $\hat{G}$ whose projection in $\hat{G}/N \approx G$ is $\Gamma$.
The intersection $\Lambda := \hat{\Gamma} \cap N$ is then a lattice in $N$. Observe that the projection map $\hat{G} \to \hat{G}/N \approx G$ induces a bundle map $\hat{\Gamma}\backslash\hat{G}/\hat{K} \to \Gamma\backslash{G}/K$
whose fibers are the orbits of the right action of $N$ (observe that since $N$ is normal in $\hat{G}$, right orbits and left orbits of $N$
are the same). More precisely, the fibers are diffeomorphic to the nilmanifold $\Lambda\backslash{N}$,
where $\Lambda = \Gamma \cap N$.

Let $\hat{\cH}$ denote a nilpotent subalgebra of $\hat{\cG}$ such that the projection $p(\hat{\cH})$ is $\cH$.

\begin{definition}
\label{def:nilsus}
$(\hat{G}, \hat{K}, \hat{\Gamma}, \hat{\cH})$ is a nil-suspension over $(G, K, \Gamma, \cH)$.
\end{definition}

The bundle map $\hat{\Gamma}\backslash\hat{G}/\hat{K} \to \Gamma\backslash{G}/K$ is equivariant relatively to the actions of $\hat{\cH}$, $\cH$.
The orbits of the right action of $\hat{\cH}$ admit a part tangent to the fibers of this fibration: the orbits of $\hat{\cH} \cap \cN$. When this tangent
part is the entire fiber, ie. when $\hat{\cH}$ contains $\cN$, the nil-suspension is a nil-extension in the sense of Definition~\ref{def:extension}.
In particular, central extensions are precisely nil-suspensions for which the exact sequences (\ref{eq:nilgroup}), (\ref{eq:nilalg}) are central,
and such that $\cN$ is contained in $\hat{\cH}$.

An arbitrary nil-suspension of an algebraic Anosov action is not necessarily Anosov. The proof of the following proposition is obvious:

\begin{proposition}
\label{pro:nilanosov}
The algebraic action $(\hat{G}, \hat{K}, \hat{\Gamma}, \cH)$ is Anosov if and only if some element $\fh_0$ of $\cH$ admits a lift $\hat{\fh}_0$ in $\hat{\cH}$ such that the induced adjoint action on $\cN/(\hat{\cH} \cap \cN)$ is hyperbolic.\qed
\end{proposition}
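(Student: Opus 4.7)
The plan is to reduce the claim to Proposition~\ref{pro:alganosov} and to a short exact sequence of $\rm{ad}(\hat{\fh}_0)$-modules that splits the Anosov condition for the nil-suspension into a "base" part and a "fiber" part. By Proposition~\ref{pro:alganosov}, the nil-suspension is Anosov iff some $\hat{\fh}_0 \in \hat{\cH}$ has the property that the induced adjoint action on $\hat{\cG}/(\hat{\cH}\oplus\hat{\cK})$ is hyperbolic, so I would try to understand this quotient in terms of the simpler quotients $\cG/(\cH\oplus\cK)$ and $\cN/(\hat{\cH}\cap\cN)$.

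First, for any $\hat{\fh}_0 \in \hat{\cH}$ with projection $\fh_0 := p_\ast(\hat{\fh}_0) \in \cH$, I would establish the short exact sequence
\[
0 \longrightarrow \cN/(\hat{\cH}\cap\cN) \longrightarrow \hat{\cG}/(\hat{\cH}\oplus\hat{\cK}) \longrightarrow \cG/(\cH\oplus\cK) \longrightarrow 0.
\]
Surjectivity of the right-hand arrow follows from $p_\ast(\hat{\cH})=\cH$ and $p_\ast(\hat{\cK})=\cK$. The only nontrivial computation is the identification of its kernel, which reduces to verifying $\cN \cap (\hat{\cH}\oplus\hat{\cK}) = \hat{\cH}\cap\cN$: if $n = \hat{h}+\hat{k} \in \cN$ with $\hat{h}\in\hat{\cH}$, $\hat{k}\in\hat{\cK}$, applying $p_\ast$ yields $p_\ast(\hat{h}) = -p_\ast(\hat{k}) \in \cH\cap\cK=\{0\}$, so $\hat{k}\in\hat{\cK}\cap\cN=\{0\}$ by the standing assumption of the nil-suspension construction, giving $n=\hat{h}\in\hat{\cH}\cap\cN$. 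That $\rm{ad}(\hat{\fh}_0)$ preserves this sequence is automatic: $\cN$ is an ideal in $\hat{\cG}$, and $\hat{\cH}$ normalizes $\hat{\cK}$ by Definition~\ref{def:algebraicaction}.

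Second, I would invoke the elementary linear-algebra fact that a linear endomorphism of the middle term of a short exact sequence of finite-dimensional vector spaces is hyperbolic iff it is hyperbolic on both the kernel and the quotient (the spectrum is the multiset union of the two spectra). Applied here, this says that the action of $\hat{\fh}_0$ on $\hat{\cG}/(\hat{\cH}\oplus\hat{\cK})$ is hyperbolic iff \emph{both} the action of $\hat{\fh}_0$ on $\cN/(\hat{\cH}\cap\cN)$ and the projected action of $\fh_0$ on $\cG/(\cH\oplus\cK)$ are hyperbolic. Combining this with Proposition~\ref{pro:alganosov} on both sides, and using the standing hypothesis that $(G, K, \Gamma, \cH)$ is already Anosov (so that Anosov elements $\fh_0 \in \cH$ exist and fill an open cone), gives the claimed equivalence.

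No serious obstacle arises: the proof is essentially an exact-sequence bookkeeping. The only mild subtlety — and really the only content — is the identification $\cN\cap(\hat{\cH}\oplus\hat{\cK}) = \hat{\cH}\cap\cN$, which is precisely where the two transversality hypotheses $\cH\cap\cK=\{0\}$ and $\hat{\cK}\cap\cN=\{0\}$ get consumed.
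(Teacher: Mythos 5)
The paper offers no argument for Proposition~\ref{pro:nilanosov} beyond declaring it obvious, so there is no proof to compare against; your write-up is a correct and essentially complete justification, and the reduction to Proposition~\ref{pro:alganosov} via the ${\rm ad}(\hat{\fh}_0)$-equivariant exact sequence $0 \to \cN/(\hat{\cH}\cap\cN)\to\hat{\cG}/(\hat{\cH}\oplus\hat{\cK})\to\cG/(\cH\oplus\cK)\to 0$ is the natural route. Your identification of the kernel, consuming $\cH\cap\cK=\{0\}$ and $\hat{\cK}\cap\cN=\{0\}$, is exactly where the transversality hypotheses enter, and Lemma~\ref{le:spectre} gives the "hyperbolic on the middle iff hyperbolic on both ends" step. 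The one point you should spell out is the final combination in the "if" direction: the hypothesis hands you an $\fh_0$ that is good for the fiber, while the standing Anosov assumption hands you a possibly different element good for the base, and you need a single $\hat{\fh}_1$ hyperbolic on all of $\hat{\cG}/(\hat{\cH}\oplus\hat{\cK})$. Openness of the two sets of good elements is not by itself enough (your "open cone" remark); what makes it work is density: each condition is the complement in $\cH$ of finitely many hyperplanes of the form $\{\mathrm{Re}\,\lambda_i=0\}$, where the weights $\lambda_i$ of the solvable algebra $p_\ast^{-1}(\cH)$ acting on $\cN\otimes\CC$ vanish on $\cN$ and hence descend to $\cH$ --- precisely the content of the Remark the paper places after the proposition --- so both sets are open and dense and therefore intersect. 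With that sentence added, the argument is complete.
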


\begin{remark}
Observe that for if $\fh_0$ admits a lift in in $\hat{\cH}$ satisfying the hypothesis of Proposition~\ref{pro:nilanosov}, then \textbf{all} the lifts of $\fh_0$ have the same property.
Indeed, the algebra $p^{-1}(\cH)$ is solvable, hence its adjoint action on $\cN \otimes \CC$ can be put in the triangular form. The diagonal coefficients provide linear maps $\lambda_1$, ... , $\lambda_n: p^{-1}(\cH) \rightarrow \CC$,
that vanish on $\cN$, and therefore induce maps $\bar{\lambda}_1$, ... , $\bar{\lambda}_n: \cH \rightarrow \CC$. Then the condition stated in Proposition~\ref{pro:nilanosov} is equivalent to the statement that
for every $i$, the complex number $\lambda_i(\fh_0)$ is either $0$ or have a non-vanishing real part.
\end{remark}

Some special cases deserve a particular terminology.

\begin{definition}
\label{def:hyperbolic}
Let $(\hat{G}, \hat{K}, \hat{\Gamma}, \hat{\cH})$ is an Anosov nil-suspension over $(G, K, \Gamma, \cH)$.
When the action on $\cN$ is hyperbolic, i.e. when $\cN \cap \hat{\cH}$ is trivial, the nil-suspension is \textbf{hyperbolic}.

When the nilpotent group $N$ is abelian, i.e. when the nilmanifold $\Lambda\backslash N$ is a torus, the nil-suspension
is a $\TT^\ell$-suspension.
\end{definition}

\subsubsection{Nil-suspensions as iterated $\TT^\ell$-suspensions}
\label{sub:niltor}
Every nil-suspension can be obtained by iteration of  $\TT^\ell$-suspensions.
Indeed, let $N_n$ be the last non-trivial term in the lower central serie of $N$, and let $\bar{N}$ be the quotient $N/N_n$.
Let $\bar{\rho}: \hat{G} \to {\rm Aut}(\bar{N})$ be the induced representation.
According to Theorem~\ref{th:latticenil} $\Lambda_n = \Lambda \cap N_n$ is a lattice in $N_n$.
It follows that the projection of $\hat{\Gamma}$ in $\hat{G}/N_n$
is a lattice $\hat{\Gamma}_n$. Then $(\hat{G}/N_n, \hat{K}/N_n, \hat{\Gamma}_n, \hat{\cH}/(\hat{\cH} \cap \cN_n))$ is
an Anosov nil-suspension over  $(G, K, \Gamma, \cH)$. Clearly, $(\hat{G}, \hat{K}, \hat{\Gamma}, \hat{\cH})$ is $\TT^\ell$-suspension
over $(\hat{G}/N_n, \hat{K}/N_n, \hat{\Gamma}_n, \hat{\cH}/(\hat{\cH} \cap \cN_n)$: the fiber is the torus $\Lambda_n\backslash N_n$,
and the representation is the one induced by the action of $\hat{G}$ on $N_n$ by conjugacy.

The claim follows by induction.

\begin{remark}
Proposition~\ref{pro:nilanosov} has a simpler formulation in the case of $\TT^\ell$-suspensions: in this case, the adjoint action of $\hat{\cG}$ on $\cN$
induces an action of $\cG$ on the abelian Lie algebra $\cN$, since inner automorphisms of $\cN$ are then trivial. Then, the condition is that
the action of $\cH$ on $\cN$ has to be partially hyperbolic, with central direction $\hat{\cH} \cap \cN$.

This criteria can be applied successively in the general case at each level of a tower of $\TT^\ell$-suspensions defining the general nil-suspension
\end{remark}

\begin{remark}
Here we give an example where it appears clearly that the operations of hyperbolic $\TT^\ell$-suspensions and central extensions
do not commute.

Let $V$ be a $3$-dimensional vector space, endowed with a basis $(e_0, e_1, e_2)$. Let $N$ be the product $V \times \bigwedge^2V$ equipped
with the composition law:
$$(u, \omega).(u', \omega') := (u+u', \omega + \omega' + u\wedge u')$$
$N$ is a nilpotent Lie group, with $[N, N] = \bigwedge^2V$.
Let $\Delta$ be the lattice in $V$ comprising $\ZZ$-linear combinations of $(e_0, e_1, e_2)$, and let
$\Delta^*$ be the lattice in $\bigwedge^2V$ comprising $\ZZ$-linear combinations of  the $e_i \wedge e_j$.
Then $\Delta \times \Delta^*$ is a lattice $\Gamma$ in the group $N$.

Let $A$ be any linear automorphism
of $V$ preserving $\Delta$. It induces a linear transformation on $\bigwedge^2V$ preserving $\Delta^*$.
We select $A$ so that it preserves $e_0$ and admits two eigenvector $u_+$, $u_-$,
associated respectively to real eigenvalues $\lambda$, $\lambda^{-1}$. Then the map ${\bf A}: (u, \omega) \mapsto (A(u), A^*(\omega))$ induces
a partially hyperbolic automorphism $f_A$ on the $2$-step nilmanifold $\Gamma\backslash{N}$. The central direction of ${\bf A}$ is the subgroup $H_0$ comprising
elements of the form $(a e_0, b u_+\wedge u_-)$ with $a$, $b \in \RR$.

Then the suspension of $f_A$, together with the right action of $H_0$, define an Anosov action of $\RR^3$. This suspension is a central extension over the Anosov action defined
in a similar way, but where $\bigwedge^2V$ is replaced by its quotient by the line spanned by $u_+ \wedge u_-$ (which indeed intersects $\Delta^*$ non-trivially).
This new Anosov action is itself a hyperbolic supension over the Anosov $\RR^2$-action defined as the combination of the suspension of the diffeomorphism
of  $\Delta\backslash V$ induced by $A$ and the translations in $\Delta\backslash V$ along the central direction $e_0$.

This last $\RR^2$-action is itself a central extension over the suspension of an Anosov automorphism of the $2$-torus.

However, the initial Anosov action of $\RR^3$ is not a central extension over the suspension of any Anosov action of $\ZZ^k$.
\end{remark}

\subsection{Nil-suspensions: the fundamental cases}
We have defined the notion of nil-suspension in the general case. Our main Theorem states that, up to commensurability,
every algebraic Anosov action is a nil-suspension over the suspension of an Anosov action of $\ZZ^k$ or over (a central extension of)
a modified Weyl chamber action. Even if dynamically poorly relevant, nil-suspensions are already by themselves quite complicated.
In this section, we try to provide more precisions about how can be nil-suspensions over these fundamental examples.

\subsubsection{Nil-suspensions over suspensions of actions of $\ZZ^k$}
\label{sub:nilZk}

Let us consider an algebraic action $(G, \Gamma, \cH)$, suspension of an Anosov action of $\ZZ^k$:
$G$ is a semidirect product $\RR^p \rtimes_\rho \RR^k$, where $\rho: \RR^k \to \op{GL}(p, \RR)$ is a morphism,
whose restriction to $\ZZ^k$ takes value in $\op{GL}(p, \ZZ)$, and such that $\rho(\RR^k)$ (and therefore $\rho(\ZZ^k)$)
contains a hyperbolic element. To produce such an action is already not so trivial, involving finite field extensions over $\QQ$
(see for example Example 3 in \cite{Bar-Maq}).

Now let $(\hat{G}, \hat{\Gamma}, \hat{\cH})$ a nil-suspension over $(G, \Gamma, \cH)$. There is an exact sequence:
$$1 \to N_0 \to \hat{G} \to G \to 1$$
where $N_0$ is a nilpotent (connected) Lie group. Let $N$ be the preimage in $\hat{G}$ of the normal subgroup $\RR^p$ of $G$.
We assume here that $N$ is the nilradical of $\hat{G}$; it actually follows from our classification Theorem in the solvable case (Theorem~\ref{th:solvable})
that we don't loose any generality in doing so.

Let $N_1=N$, $N_2 = [N_1, N_1]$, etc... denote the terms of the lower central serie of $N$. Let us consider the last non-trivial term of this serie.
Let $H_n$ be the intersection between $\hat{H}$ and $N_n$. Since $N_n$ is contained in the center of $N$, $H_n$
commutes with every element of $N$. Moreover, it is preserved by the adjoint action of $\hat{H}$, which induces an action
of $\RR^k \approx G/N$. This induced action is moreover unipotent (since $\hat{H}$ is nilpotent). Let
$H^0_n$ be the (non trivial) subspace in $H_n$ comprising common fixed points for this action of $\RR^k$. Every element of $H^0_n$
is in the center of $N$ and commutes with $\hat{H}$: it lies in the center of the entire $\hat{G}$.

The intersection $\Delta = \Gamma \cap \hat{H}$ is a lattice in $\hat{H}$. Let $h_0$ be an Anosov element in this intersection. The action
of $h_0$ by conjugacy induces an automorphism in $N_n$ which is partially hyperbolic, with central direction $H_n$, and fixing pointwise $H^0_n$.
According to Remark~\ref{rk:latticenil}, $H^0_n \cap \Lambda$ is a lattice in $H^0_n$. We can then take the quotient by $H^0_n/(H^0_n \cap \Lambda)$. It follows that the action is a central extension over an algebraic Anosov action such that $H^0_n$ has been deleted.

Repeating inductively this procedure, we obtain that $(\hat{G}, \hat{\Gamma}, \hat{\cH})$ is a nil-extension over  an algebraic Anosov action such that
$H_n$ is trivial (this nil-extension is not necessarily a central extension, cf. Remarque~\ref{rk:l+l}).

Now by taking the quotient by $N_n$ as in \S~\ref{sub:niltor} we get that the Anosov action is a $\TT^\ell$-suspension over another algebraic Anosov action,
for which the nilradical has now nilpotent index $n-1$. Observe that this suspension is
hyperbolic since $H_n = H \cap N_n$ is trivial.

By induction, we obtain that $(\hat{G}, \hat{\Gamma}, \hat{\cH})$ is obtained from $(G, \Gamma, \cH)$
by an alternating succession of nil-extensions and hyperbolic $\TT^\ell$-suspensions. Observe that this conclusion
does not hold for nil-suspensions over arbitrary algebraic Anosov actions: here, the fact that the quotient $\hat{G}/N$ is abelian
is crucial.

\subsubsection{Nil-suspensions over Weyl chamber actions}
\label{sub:exsusweyl}

A first remark is that when the algebraic action $(G, K, \Gamma, \cH)$ is a Weyl chamber action,
then the hypothesis of Proposition~\ref{pro:nilanosov} is easily satisfied.

More precisely: let $G$ be a semisimple Lie group, let $(G, K, \Gamma, \mathfrak a)$ be a Weyl chamber action, and let $\hat{G}$ be a nil-extension of
$G$:
\begin{equation}
\label{eq:eq}
1 \to N \to \hat{G} \to G \to 1
\end{equation}
Then $G$ is the semisimple part of $\hat{G}$, and $N$ is the nilradical of $\hat{G}$: the exact sequence \ref{eq:eq} is split.
In particular, up to finite covers, $\hat{G}$ is isomorphic to a semi-direct product $G \ltimes N$. Let $\rho: \cG \to {\rm GL}(\cN)$ be the representation induced by the adjoint representation.
Then, as a general fact for representations of semisimple real Lie groups,
for every $a$ in $\mathfrak A$, $\rho(a)$ is $\RR$-split (see \cite[Chapter 4, Proposition~4.3]{onishchik}, the point
is that any weight is a $\RR$-linear combination of roots).
Therefore, non-zero weights of $\rho$ are either positive or negative. In other words,
once fixed a Weyl chamber $C$ we have a splitting $\cN = \cN^{\mathfrak a} \oplus \cN^+ \oplus \cN^-$
such that $[\mathfrak a, \cN^{\mathfrak a}] = 0$, and such that for every element $a_0$ of $C$, the restriction of $a_0$ to $\cN^+$ (respectively $\cN^-$) has only positive (respectively negative)
eigenvalues. Define $\hat{\cH}$ as the sum $\cN^{\mathfrak a} \oplus \mathfrak a$ in $\hat{\cG} \approx \cN \oplus \cG$. Choose $\hat{K}$ as the copy of $K$ in the Levi factor $G$ of $\hat{G}$,
and let $\hat{\Gamma}$ be a cocompact lattice of $\hat{G}$ projecting on $\Gamma$:

\begin{proposition}
The nil-suspension $(\hat{G}, \hat{K}, \hat{\Gamma}, \hat{\cH})$ of $(G, K, \Gamma, \mathfrak a)$ is Anosov.\qed
\end{proposition}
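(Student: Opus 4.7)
The plan is to invoke Proposition~\ref{pro:nilanosov}. Since the underlying Weyl chamber action $(G, K, \Gamma, \mathfrak a)$ is already known to be Anosov (\S~\ref{sub:exweyl}), it suffices to exhibit an element $\fh_0 \in \mathfrak a$ that lifts to some $\hat{\fh}_0 \in \hat{\cH}$ whose induced adjoint action on the quotient $\cN/(\hat{\cH} \cap \cN)$ is hyperbolic. Everything else that is needed for the nil-suspension setup has already been arranged in the excerpt.

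I would first identify the quotient. The vector-space decomposition $\hat{\cG} \approx \cG \oplus \cN$ coming from the Levi splitting $\hat{G} \approx G \ltimes N$ (recalled in the excerpt, up to finite cover) gives $\mathfrak a \cap \cN = 0$ inside $\hat{\cG}$; hence
$$\hat{\cH} \cap \cN = (\mathfrak a \oplus \cN^{\mathfrak a}) \cap \cN = \cN^{\mathfrak a},$$
so the quotient $\cN/(\hat{\cH} \cap \cN)$ is canonically identified with $\cN^+ \oplus \cN^-$. Next, I would pick any regular $a_0$ in the Weyl chamber $C$ and take $\hat{\fh}_0 := a_0 \in \mathfrak a \subset \hat{\cH}$ as its tautological lift. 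By the construction of the splitting $\cN = \cN^{\mathfrak a} \oplus \cN^+ \oplus \cN^-$ as a sum of weight spaces for the representation $\rho: \cG \to {\rm GL}(\cN)$ restricted to $\mathfrak a$, together with the $\RR$-split property of $\rho(a_0)$ recalled in the excerpt, the operator ${\rm ad}(a_0)$ has strictly positive real eigenvalues on $\cN^+$ and strictly negative real eigenvalues on $\cN^-$. Therefore the induced action on the quotient is hyperbolic, and Proposition~\ref{pro:nilanosov} applies directly.

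There is essentially no dynamical obstacle: once the splitting of $\cN$ under $\mathfrak a$ and the definition $\hat{\cH} = \mathfrak a \oplus \cN^{\mathfrak a}$ are in place, the verification of hyperbolicity is almost tautological. The only routine items I would double-check to ensure that $(\hat{G}, \hat{K}, \hat{\Gamma}, \hat{\cH})$ is genuinely an algebraic action in the sense of Definition~\ref{def:algebraicaction} are that $\hat{\cH}$ is nilpotent (it is the direct product $\mathfrak a \times \cN^{\mathfrak a}$ of an abelian algebra with a nilpotent one, since $[\mathfrak a, \cN^{\mathfrak a}] = 0$), that $\hat{\cH} \cap \hat{\cK} = 0$ (immediate from $\mathfrak a \cap \cK = 0$ and $\cG \cap \cN = 0$), and that $\hat{\cH}$ normalizes $\hat{\cK}$ (using that $\cK$ commutes with $\mathfrak a$ and preserves every $\mathfrak a$-weight space of $\cN$); the last item is the only one requiring a little more care and, if necessary, can be enforced by an appeal to Lemma~\ref{le:commute}.
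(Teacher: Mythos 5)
Your proposal is correct and follows exactly the route the paper intends: the paper states this proposition with an immediate \qed, the proof being precisely the observation that $\hat{\cH}\cap\cN=\cN^{\mathfrak a}$, that a regular element $a_0$ of the Weyl chamber acts with nonzero real eigenvalues on $\cN/(\hat{\cH}\cap\cN)\cong\cN^+\oplus\cN^-$ by the $\RR$-split weight decomposition, and that Proposition~\ref{pro:nilanosov} then applies. Your additional sanity checks (nilpotency of $\mathfrak a\oplus\cN^{\mathfrak a}$, triviality of $\hat{\cH}\cap\hat{\cK}$, normalization of $\hat{\cK}$) are all valid and consistent with the construction preceding the statement.
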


\begin{remark}
\label{rk:katoksus}
$\TT^\ell$-suspensions over Weyl chamber actions was already introduced in \cite[Example 2.7]{KatSpatz1} where
they were called twisted symmetric space examples, but only in the {hyperbolic} case, i.e. when
the central direction $V^{\mathfrak a}$ is trivial. It was also observed that this construction can be iterated, alternating central extensions
and $\TT^\ell$-suspensions.
\end{remark}

Actually, the main difficulty in constructing nil-suspensions over Weyl chamber actions is to find a representation $\rho$ such that $G \ltimes_\rho N$ admits indeed
a cocompact lattice $\hat{\Gamma}$.
The typical way to do so is to consider a $\QQ$-algebraic structure one $G$, i.e. to consider
$G$ as the real form of $\QQ$-algebraic group $\bf G$, such that $G_\ZZ$ (which, by Borel-Harish-Chandra Theorem is always a lattice)
is cocompact, and a $\QQ$-representation $\rho$ from ${\bf G}$ into a $\QQ$-vector space ${\bf V}$.
Then, $\rho(G_\ZZ)$ preserves a lattice $\Delta$ of ${\bf V}_\QQ$.
Take then as nilpotent Lie group $N$ the real form $V := {\bf V}_\RR$ and as cocompact lattice $\hat{\Gamma}$
of $\hat{G} = G \ltimes_\rho V$ the subgroup $\Gamma \ltimes_\rho \Delta$.

\begin{definition}
\label{def:arithmeticsuspension}
$(G \ltimes_\rho V, K, \Gamma \ltimes_\rho \Delta, {\mathfrak a} \ltimes_\rho V^{\mathfrak a})$ is an \textit{arithmetic} $\TT^\ell$-suspension over $(G, K, \Gamma, {\mathfrak a})$.
\end{definition}

One can also deform the lattice $\hat{\Gamma} = \Gamma \ltimes_\rho \Delta$; after such a deformation we still have an algebraic Anosov action.

A special way to do so is the following: let $\tau: \Gamma \to V$ be any $1$-cocycle, i.e.
a map satisfying:
$$ \forall \gamma, \gamma' \in \Gamma, \;\; \tau(\gamma\gamma') = \tau(\gamma) + \rho(\gamma)(\tau(\gamma'))$$

Then:
$$\begin{array}{ccc}
\Gamma \ltimes_\rho \Delta & \rightarrow & G \ltimes_\rho V \\
(\gamma, \delta) & \mapsto & (\gamma, \delta + \tau(\gamma))
\end{array}$$
is a morphism, whose image is a cocompact lattice of $G \ltimes_\rho V$, that we denote by $\hat{\Gamma}_\tau$.

\begin{definition}
$(G \ltimes_\rho V, K, \hat{\Gamma}_\tau, {\mathfrak a} \ltimes_\rho V^{\mathfrak a})$ is a \textit{deformation of an arithmetic} $\TT^\ell$-suspension over $(G, K, \Gamma, {\mathfrak a})$.
\end{definition}

Observe that cohomologous cocycles produce conjugated lattices, hence equivalent algebraic Anosov actions.

Geometrically, the fact that $\tau$ vanishes in $H^1(\Gamma, V)$ (the non-deformed case) means that there is a Levi factor $\approx G$ whose
intersection with $\hat{\Gamma}_\tau$ is a lattice in itself. In other words, the flat bundle over $\Gamma\backslash G/K$ admits then
a horizontal section (the projection in $\Gamma\backslash\hat{G}/K$ of a right orbit in $G \ltimes_\rho V$
of a Levi factor), which, in particular, is preserved by the action. In this case, there is an equivariant copy of the Weyl chamber action in the suspended action.

\begin{proposition}
\label{pro:arithmetic}
Assume that $\Gamma$ is an irreducible lattice in $G$ and that the real rank of $G$ is at least $2$ and that $G$ has no compact simple factor.
Then, any $\TT^\ell$-suspension over $(G, K, \Gamma, \cA)$ is
commensurable to a deformation of an arithmetic $\TT^\ell$-suspension.

If moreover $G$ has no simple factor of real rank $1$, then any $\TT^\ell$-suspension over $(G, K, \Gamma, \cA)$ is
commensurable to an arithmetic $\TT^\ell$-suspension.
\end{proposition}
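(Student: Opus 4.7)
The plan is to encode a $\TT^\ell$-suspension, up to commensurability, by a triple $(\rho, \Delta, [\tau])$ consisting of a representation $\rho: G \to \op{GL}(V)$, a $\rho(\Gamma)$-invariant lattice $\Delta \subset V$, and a cohomology class $[\tau] \in H^1(\Gamma, V)$, and then to apply Margulis's arithmeticity, superrigidity, and $H^1$-vanishing theorems to this triple.

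I would start with the structural reduction. Since $G$ is semisimple and $\cN$ is a finite-dimensional $\cG$-module (abelian in the $\TT^\ell$ case), Whitehead's lemma gives $H^2(\cG,\cN)=0$, so the Lie-algebra extension $0 \to \cN \to \hat{\cG} \to \cG \to 0$ splits. Passing to finite covers and replacing $N$ by its universal cover via commensurability operations $(1)$ and $(6)$ of \S\ref{sub:commen}, we reduce to $\hat G = G \ltimes_\rho V$ with $V$ a real vector space; by Lemma~\ref{le:commute} and Remark~\ref{rk:nocompact} we may further arrange that $\hat K$ is the copy of $K$ in the Levi factor $G$. Cocompactness of $\hat\Gamma$ then forces $\Delta := \hat\Gamma \cap V$ to be a uniform lattice of $V$, and $\hat\Gamma \to \Gamma$ becomes an extension of $\Gamma$ by $\Delta$. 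A set-theoretic section $\gamma \mapsto (\gamma, \tau(\gamma))$ inside $G \ltimes_\rho V$ yields a cocycle $\tau: \Gamma \to V/\Delta$; after passing to a finite-index subgroup it lifts to a cocycle $\tau: \Gamma \to V$, and cohomologous cocycles produce $V$-conjugate lattices, hence equivalent algebraic actions.

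Next I would apply Margulis's theorems. Under the standing hypotheses, arithmeticity produces a $\QQ$-algebraic group ${\bf G}$ with ${\bf G}(\RR)^0$ isogenous to $G$ and $\Gamma$ commensurable to ${\bf G}(\ZZ)$. Superrigidity applied to $\rho|_\Gamma$ then shows, after a linear change of coordinates on $V$, that $\rho$ arises from a $\QQ$-rational representation of ${\bf G}$ on a $\QQ$-vector space ${\bf V}$ with $V = {\bf V}_\RR$; consequently $\rho({\bf G}(\ZZ))$ stabilizes a $\ZZ$-lattice $\Delta_0 \subset {\bf V}_\QQ$ commensurable to $\Delta$. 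This identifies $(\hat G, \hat K, \hat\Gamma, \hat{\cH})$, up to commensurability, with the deformation $\hat\Gamma_\tau$ of the arithmetic $\TT^\ell$-suspension of Definition~\ref{def:arithmeticsuspension}, giving the first assertion. For the second, if $G$ has no simple factor of real rank $1$ then $H^1(\Gamma, V)=0$ for every finite-dimensional real representation $V$, by the higher-rank vanishing theorem; hence $\tau$ is a coboundary and $\hat\Gamma$ is $V$-conjugate to $\Gamma \ltimes_\rho \Delta$ itself.

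The main obstacle lies in applying superrigidity so that both the geometric representation $\rho$ coming from the extension \emph{and} the geometric lattice $\Delta$ are simultaneously identified, after commensuration and a change of basis, with $\QQ$-rational data of ${\bf G}$, in a manner compatible with $\hat K$ and with the structure $\hat{\cH} = \mathfrak a \oplus V^{\mathfrak a}$ highlighted in \S\ref{sub:exsusweyl}. A secondary subtlety is the $H^1$-vanishing, which for non-unitary finite-dimensional real representations is classical but requires an appeal to higher-rank rigidity, or to property~(T) together with a decomposition of $V$ into pieces for which a unitary argument can be made.
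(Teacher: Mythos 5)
Your argument reaches the right conclusion and shares its backbone with the paper's proof (Margulis arithmeticity, rationality of $\rho$ and $\Delta$, then control of the remaining freedom in the lattice $\hat\Gamma$), but it diverges on the last point. The paper delegates the entire deformation analysis to Mostow's theorem on lattices in $G\ltimes_\rho V$ (\cite[Ch.~4, Thm.~2.3]{vinberg2}) and, for the second assertion, to the rigidity discussion at the end of that section; you instead parametrize the lattices explicitly by a $1$-cocycle and invoke the higher-rank vanishing of $H^1(\Gamma,V)$. Your route is more self-contained and makes visible what the black box is doing; it is a legitimate alternative. Two smaller remarks: superrigidity is not needed to make $\rho$ rational, since $\rho$ is already a representation of all of $G$ -- as in the paper, Zariski density of $\Gamma$ (Borel density) plus the fact that $\rho(\Gamma)$ preserves $\Delta$ already forces $\rho\circ r$ to be a $\QQ$-morphism for the $\QQ$-structure that $\Delta$ defines on $V$; and you are right that property~(T) alone does not give $H^1(\Gamma,V)=0$ for non-unitary finite-dimensional $V$, so the second part genuinely requires the higher-rank vanishing theorem, consistent with the exclusion of rank-one factors in the hypothesis.

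The one step you assert without justification, and which does require an argument, is the lifting of the cocycle $\tau\colon\Gamma\to V/\Delta$ to a $V$-valued cocycle after passing to a finite-index subgroup. The obstruction is the class of the extension $1\to\Delta\to\hat\Gamma\to\Gamma\to 1$ in $H^2(\Gamma,\Delta)$, and a nonzero torsion class in $H^2$ of an infinite group is not in general killed by restriction to a finite-index subgroup (transfer only controls the corestriction of the restriction). What rescues the claim is specific to your situation: the pushout of this extension along $\Delta\hookrightarrow V$ is $\Gamma\ltimes_\rho V$, which splits, so the obstruction class dies in $H^2(\Gamma,V)=H^2(\Gamma,\Delta)\otimes\RR$ (using that $\Gamma$ is virtually of type $F$) and is therefore $m$-torsion for some $m$. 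Pushing the extension forward along multiplication by $m$ on $\Delta$ then yields a split extension containing $\hat\Gamma$ with finite index; intersecting $\hat\Gamma$ with the image of a splitting produces a finite-index subgroup of $\hat\Gamma$ of the form $s(\Gamma_1)\ltimes\Delta$, and any homomorphic section $\gamma\mapsto(\gamma,\sigma(\gamma))$ into $G\ltimes_\rho V$ automatically has $\sigma$ a genuine $V$-valued cocycle. With this lemma supplied, your proof is complete.
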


\begin{proof}
%
By Margulis arithmeticity Theorem, under the hypothesis of the proposition, there exists a $\QQ$-algebraic group ${\bf H}$ and a morphism $r: H_\RR \to G$ with compact kernel
such that, up to finite index,  we have $\Gamma = r(H_\ZZ)$. Moreover, $\rho \circ r(H_\ZZ)$ preserves the lattice $\Delta=\hat{\Gamma} \cap V$ this lattice defines
a $\QQ$-structure on $V$ such that $\Delta = V_\ZZ$. Then $\bar{\rho} = \rho \circ r$ is a $\QQ$-morphism. Hence, $(H_\RR \ltimes_{\bar{\rho}} V, K, H_\ZZ \ltimes_{\bar{\rho}} \Delta, \cA \ltimes_{\bar{\rho}} V^A)$ is an arithmetic $\TT^\ell$-suspension.

Now, by a Theorem of Mostow (see \cite[chapter 4, Theorem 2.3]{vinberg2}), (a finite index of)
the lattice $\hat{\Gamma}$ can be deformed continuously into (a finite index lattice of) the lattice $H_\ZZ \ltimes_{\bar{\rho}} \Delta$.
The first part of the proposition follows.

For the second part of the proposition, see the discussion at the end of \cite[Chapter 4, \S~2]{vinberg2}.

\end{proof}

\begin{remark}
\label{sub:simplecompact}
In Remark~\ref{rk:nocompact}, we have observed that any Weyl chamber action is equivalent to a Weyl chamber action on a semisimple group
that has no compact simple factor. One is tempted to remove in Proposition~\ref{pro:arithmetic} the "no compact simple factors" assumption, since it seems
to be automatically satisfied.

However, it is not correct: let $G = G_{nc}.C$ be the decomposition of $G$ as a product of a semisimple Lie group without compact factor $G_{nc}$
and the product $C$ of all compact simple factors. Even if automatically included in $K$, the compact subgroup $C$ may act non-trivially on $V$,
and the components in $C$ of elements of $\Gamma$ plays a role in the definition of the suspension which cannot be removed by simply taking a finite index subgroup.
\end{remark}

\begin{remark}
A $\TT^\ell$-suspension over a Weyl chamber flow is not necessarily an abelian extension over a hyperbolic nil-suspension.
A typical example is provided when $\hat{G}$ is the group of \textit{affine transformations on $\cG$}, i.e.
the semidirect product $G \ltimes_{\rm Ad} \cG$ (indeed, this semidirect product can be seen as the subgroup of ${\rm Diff}(\cG)$ generated by
the transformations ${\rm Ad}(g)$ and translations on $\cG$). In this case, the invariant subspace $\cG^A$ is the subalgebra $\cA$, and it can be easily proved
that most orbits of $\cA$ in $(G_\ZZ \ltimes_{\rm Ad} \cG_\ZZ)\backslash(G \ltimes_{\rm Ad} \cG)/K$ are not compact.
\end{remark}

\subsubsection{Nil-suspensions over central extensions of Weyl chamber actions}
\label{sub:nilcentral}

As we will prove (Theorem~\ref{th:mixed}) any abelian extension over a modified Weyl chamber action is actually a central extension over (maybe another) modified Weyl chamber action, and
algebraic Anosov actions which are not as described in section \ref{sub:nilZk} are nil-suspensions over Anosov actions \textit{commensurable} to
a central extension over a (modified) Weyl chamber action.

Let $(\hat{G}, \hat{K}, \hat{\Gamma}, \hat{\cH})$ be such a central extension of a Weyl chamber action $(G, K, \Gamma, \mathfrak a)$
with associated central exact sequence $1 \to A \to \hat{G} \to G \to 1$.
Actually, this sequence is necessarily split, ie. $\hat{G} \approx A \times G$,
but this splitting is not necessarily preserved by $\hat{\Gamma}$.

The construction of a nil-suspension over $(\hat{G}, \hat{K}, \hat{\Gamma}, \hat{\cH})$
tantamount essentially to the construction of a nil-suspension
over $(G, K, \Gamma, \mathfrak a)$ (for example as described in the previous section \ref{sub:exsusweyl}) and to introduce a action
of $\ZZ^\ell$ (if $A$ has dimension $\ell$) commuting with the action on one fiber of the nil-suspension (which is a nilmanifold) induced
by the holonomy group on a compact orbit of $\fa$. We don't enter in any further detail; since we have no hope to give a complete description.

However, we want to stress out the following phenomenon: there exists central extensions of Weyl chamber actions, one commensurable one to the other,
but such that one admits (abelian hyperbolic) nil-suspension which is not commensurable to any nil-suspension over the other.
It shows that some care is needed when replacing an algebraic action by another one which is commensurable to it.

Take as semisimple Lie group $G$ the group $\op{SU}(1,2)$. Let $\Gamma_0$ be a finite index subgroup of
a uniform arithmetic lattice of $\op{SU}(1,2)$; we can furthermore assume that $\Gamma_0/[\Gamma_0, \Gamma_0]$ is infinite, i.e. that there is a non-trivial
morphism $u: \Gamma_0 \to \RR$ (cf. \cite[page 98]{vinberg2}).
Since $\Gamma_0$ is an arithmetic lattice, there is a representation $\rho: G  \to \op{GL}(k, \RR)$
such that $\rho(\Gamma_0)$ preserves the lattice $\Lambda = \ZZ^k$ of $V = \RR^k$.
We select here $\rho$ so that it is hyperbolic, ie. such that it has no zero weight.
Let $V^\CC$ and $\rho^\CC: G \to \op{GL}(V^\CC)$ be the complexifications: $V^\CC$ contains a $\rho^\CC(\Gamma_0)$-invariant lattice $\Lambda^\CC$.

We consider the circle $\SS^1$ as the group
of complex numbers of norm $1$. Select a non trivial morphism $u: {\Gamma} \to \SS^1$ with dense image, and define the morphism $r_u: {\Gamma} \to \op{U}(1,2)$ by $r_u(\gamma) = u(\gamma)\gamma$, where the second factor
$u(\gamma)\gamma$ denotes the composition of $\gamma$ by the complex multiplication by $u(\gamma)$.
If $u$ is small enough, then the image $\Gamma_u$ of $r_u$ is a uniform lattice in $\op{U}(1,2)$.
Observe that $\rho^\CC(\Gamma_u)$ does not preserve $\Lambda^\CC$.
Actually, we can select $u$ so that there is no complex representation of $\op{U}(1,2)$ such that the image
of $\Gamma_u$ by this representation preserves a lattice: indeed, the space of such representations up to conjugacy is countable,
whereas there are uncountably many morphisms $u$.

Let $\mathfrak{u}(1,2)$ be the Lie algebra of $\op{U}(1,2)$, let
let $\mathfrak{su}(1,2)$ be the Lie algebra of $\op{SU}(1,2)$, considered as a subalgebra of $\mathfrak{u}(1,2)$: we can write
$\mathfrak{u}(1,2) = \cZ \oplus \mathfrak{su}(1,2)$, where $\cZ$ is the center of $\mathfrak{u}(1,2)$, tangent to the
subgroup $Z \approx \SS^1$ comprising matrices of multiplication by an element of $\SS^1$.

Select a Cartan subspace $\fa$ of $\mathfrak{u}(1,2)$, and consider it as a $1$-dimensional subalgebra of $\mathfrak{su}(1,2)$.
Let $K_0$ be the compact complement of the centralizer of $\fa$ in $\op{SU}(1,2)$,
that we can consider as a subgroup in $\op{U}(1,2)$, and let $\cH$ be the sum $\cZ \oplus \fa$.
Clearly, $(U(1,2), K_0, \Gamma_u, \cH)$ is a central extension of $(SU(1,2), K_0, \Gamma_0, \fa)$.

Consider now the group $G'= K' \times \op{U}(1,2)$, where $K'$ is the circle $\SS^1$, and let $\Gamma'$ be the image of the morphism
$\Gamma_0 \to G'$ mapping $\gamma$ on $(u(\gamma)^{-1}, r_u(\gamma))$. The center of $G'$ is the torus
$K' \times Z$. Clearly, $\Gamma'$ is a uniform lattice of $G'$. Moreover, $(G', K' \times K_0, {\Gamma}', \cH)$ is commensurable, even
equivalent, to $(\op{U}(1,2), K_0, \Gamma_u, \cH)$: the equivalence is obtained by dividing by the compact center $K'$ (item $(5)$).

Now the key point is that in doing so, we have introduced a new circle, which provides one more parameter
for the construction of a nil-suspension. More precisely:
Let $\hat{G}$ be the semidirect product $V^\CC \rtimes (K' \times \op{U}(1,2))$ where $\op{U}(1,2)$ acts on $V^\CC$ through
$\rho^\CC$, and $K' \approx \SS^1$ by multiplication by complex numbers of norm $1$. We have constructed $\Gamma'$ so that
its $K'$-component cancel the deformation we have introduced for $\Gamma_u$, so that now the action of $\Gamma' \subset G'$
preserves the lattice $\Lambda^\CC$ in $V^\CC$: the semidirect product $\hat{\Gamma}:= \Lambda^\CC \rtimes \Gamma'$ is a uniform lattice
in $\hat{G}$. We are in the situation very similar to the one considered in Definition~\ref{def:arithmeticsuspension}:
$(\hat{G}, K' \times K_0, \hat{\Gamma}, \cH)$  is then an abelian nil-suspension over $(G', K' \times K_0, {\Gamma}', \cH)$. Observe
that it is Anosov since $\rho$ has been selected hyperbolic.

On the other hand, since we have selected $\Gamma_u$ so that it preserves no lattice in a $\op{U}(1,2)$-complex vector space,
$(\op{U}(1,2), K_0, \Gamma_u, \cH)$ admits no abelian nil-suspension at all. Our claim follows.

\section{Classification of algebraic Anosov actions}
\label{sec:classification}

 In all this section, $(G, K, \Gamma, \cH)$  denotes a fixed algebraic Anosov action, and $\fh_0$ an Anosov element of $\cH$. We assume
 that $K$ is connected (cf. Remark~\ref{rk:connected}). Up to conjugacy, one can assume that the $\cH$-orbit of $\Gamma$ in $G/K$ is compact, i.e. that $\Gamma \cap HK$ is a lattice in $HK$, that we denote by $\Delta$.

 We denote by $R$ the (solvable) radical of $G$, by $N$ its nilradical. We will need to consider the universal covering
 $\pi: \tilde{G} \to G$. The kernel of $\pi$ is a discrete group $\Pi$ contained in the center of $\tilde{G}$.
 The Lie algebra of $\tilde{G}$ is $\cG$.
 The group $\tilde{G}$ is a semidirect product $\tilde{R} \rtimes \tilde{L}$ of its radical $\tilde{R}$, which is tangent to $\cR$, and a Levi factor $\tilde{L} \approx \tilde{G}/\tilde{R}$.
 We have $\pi(\tilde{R}) = R$, and $\pi(\tilde{N}) =N$, where $\tilde{N}$ is the nilradical of $\tilde{G}$, which is tangent to the nilradical $\cN$
 of $\cG$.  We also denote by $\tilde{K}$ the unique subgroup of $\tilde{G}$ tangent to the subalgebra $\cH \subset \cG$. It is not
 necessarily a compact subgroup of $\tilde{G}$, but the intersection $\Pi' = \Pi \cap \tilde{K}$ is a lattice in $\tilde{K}$.
Hence, $(\tilde{G}, \tilde{K}, \tilde{\Gamma}, \cH)$ is a generalized algebraic Anosov action in the sense of Definition~\ref{def:algebraicaction2}.

 Since we also assume that $\Delta = \Gamma \cap HK$ is a lattice in $HK$, the intersection $\tilde{\Gamma} \cap \tilde{H}\tilde{K}$ is a lattice $\tilde{\Delta}$ in $\tilde{H}\tilde{K}$ too.

\subsection{Preliminary results}
\label{sub:prelim}
\begin{lemma}
\label{le:commutator}
The stable and unstable subalgebras $\cS$ and $\cU$ lie in the derived ideal $[\cG, \cG]$.
\end{lemma}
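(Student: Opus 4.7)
The plan is to exploit the spectral properties of $\operatorname{ad}(\fh_0)$ on the four summands of the splitting $\cG = \cU\oplus\cS\oplus\cK\oplus\cH$ provided by Proposition~\ref{pro:alganosov}. The key observation is that on the stable and unstable summands the operator $\operatorname{ad}(\fh_0)$ has no zero eigenvalue: by construction the eigenvalues of $\operatorname{ad}(\fh_0)|_\cU$ all have strictly positive real part, and those of $\operatorname{ad}(\fh_0)|_\cS$ strictly negative real part. In particular, both restrictions are invertible linear maps of $\cU$ and $\cS$ respectively.

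From this invertibility the conclusion is immediate: since $\operatorname{ad}(\fh_0)$ preserves $\cU$ and restricts to an automorphism of it, we have
\[
\cU \;=\; \operatorname{ad}(\fh_0)(\cU) \;=\; [\fh_0,\cU] \;\subseteq\; [\cG,\cG],
\]
and the same argument applied verbatim to $\cS$ yields $\cS\subseteq[\cG,\cG]$. No finer information about $\cK$ or $\cH$ is needed.

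Since the invertibility of $\operatorname{ad}(\fh_0)$ on $\cU$ and $\cS$ is the very content of the Anosov condition expressed in Proposition~\ref{pro:alganosov}, there is no real obstacle; the only point requiring a moment of attention is that the splitting supplied by that proposition is $\operatorname{ad}(\fh_0)$-invariant, so that the restrictions $\operatorname{ad}(\fh_0)|_\cU$ and $\operatorname{ad}(\fh_0)|_\cS$ are well-defined operators on $\cU$ and $\cS$ (rather than merely producing vectors in $\cG$). An equivalent formulation, if one prefers to argue via a quotient, is to pass to $\cG/[\cG,\cG]$: on this abelian quotient every inner derivation vanishes, so the induced action of $\operatorname{ad}(\fh_0)$ is zero; hence the images of $\cU$ and $\cS$, which would have to be spanned by eigenvectors with non-vanishing real part, must be trivial. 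Either route is a couple of lines.
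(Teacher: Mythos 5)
Your proof is correct and is essentially identical to the paper's: the paper likewise observes that $\operatorname{ad}(\fh_0)|_{\cU}$ and $\operatorname{ad}(\fh_0)|_{\cS}$ have trivial kernel, hence are surjective, so every element of $\cU$ and $\cS$ is a commutator $[\fh_0,\cdot]$. Nothing further is needed.
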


\begin{proof}
The restrictions ${\rm ad} \fh_0|_{\mathcal{U}}$ and ${\rm ad} \fh_0|_{\mathcal{S}}$ have trivial kernel, hence are surjective.
Every element of $\cS$ and $\cU$ is therefore a commutator $[\fh_0, {\mathfrak h}]$. It proves the Lemma.
\end{proof}

A fundamental remark is that algebraic Anosov actions are closely related to the general notion of Cartan subalgebras (CSA) (see section \ref{sub:CSA}).
More precisely:

\begin{proposition}
\label{pro:CSA}
Let $\cK = \cK^\ast \oplus \cT_1$ be a Levi decomposition of $\cK$, and let $\fa_0$ be a maximal abelian subalgebra of $\cK^\ast$. Then,
$\cH \oplus \fa_0 \oplus \cT_1$ is a CSA of $\cG$.
\end{proposition}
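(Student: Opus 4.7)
The plan is as follows. First I would invoke Lemma~\ref{le:commute} to reduce to the situation where $\cH$ centralizes $\cK$, so in particular $[\cH, \fa_0] = [\cH, \cT_1] = 0$; without this step $\cH \oplus \fa_0 \oplus \cT_1$ is not even guaranteed to be a Lie subalgebra. Once this reduction is made, set $\cM := \cH \oplus \fa_0 \oplus \cT_1$. Since $\cT_1$ is the (abelian) radical of $\cK$ and $\fa_0$ is abelian inside $\cK^\ast$, the subspace $\fa_0 \oplus \cT_1 \subset \cK$ is abelian; and since it commutes with $\cH$, $\cM$ is a Lie subalgebra, indeed nilpotent (it is the direct product of the nilpotent $\cH$ with the abelian $\fa_0 \oplus \cT_1$).

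Next I would exhibit an element $\fh_1 \in \cM$ such that ${\rm ad}(\fh_1)$ has no zero eigenvalue on $\cG/\cM$. The natural choice is $\fh_1 := \fh_0 + a_0$ where $a_0 \in \fa_0$ is regular in $\cK^\ast$, i.e.\ its centralizer in $\cK^\ast$ equals $\fa_0$. Fix an ${\rm Ad}(K)$-invariant inner product on $\cG$, as in the proof of Proposition~\ref{pro:alganosov}; with respect to it, ${\rm ad}(a_0)$ is skew-symmetric, so its eigenvalues on $\cG$ are purely imaginary. Because $\cM$ is nilpotent and $\fh_0 \in \cM$, Lemma~\ref{le:niltrig} implies that the Anosov splitting $\cG = \cU \oplus \cS \oplus \cK \oplus \cH$ is preserved by ${\rm ad}(\cM)$, and in particular by ${\rm ad}(a_0)$. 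Moreover ${\rm ad}(\fh_0)$ and ${\rm ad}(a_0)$ commute (because $[\cH,\cK]=0$), so they are simultaneously triangularizable.

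Now I compute eigenvalues of ${\rm ad}(\fh_1)$ on $\cG/\cM \cong \cU \oplus \cS \oplus (\cK^\ast/\fa_0)$. On $\cU$ the eigenvalues of ${\rm ad}(\fh_0)$ have positive real part while those of ${\rm ad}(a_0)$ are purely imaginary, so the eigenvalues of the sum still have positive real part, hence are nonzero; symmetrically on $\cS$ they are nonzero with negative real part. On $\cK^\ast/\fa_0$, ${\rm ad}(\fh_0)$ vanishes (as $\cH$ centralizes $\cK^\ast$), while ${\rm ad}(a_0)$ induces a map whose kernel on $\cK^\ast$ is precisely $\fa_0$ by regularity, so its induced action on $\cK^\ast/\fa_0$ has no zero eigenvalue. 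Altogether ${\rm ad}(\fh_1)$ has no zero eigenvalue on $\cG/\cM$.

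From this I would conclude in the standard way: if $y \in N_\cG(\cM)$, then $[\fh_1,y] \in \cM$, so the class $\bar y \in \cG/\cM$ lies in $\ker\,{\rm ad}(\fh_1)$, forcing $\bar y = 0$, i.e.\ $y \in \cM$. Thus $\cM$ is self-normalizing and nilpotent, hence a CSA; equivalently, the no-zero-eigenvalue condition shows directly that $\cM$ is an Engel subalgebra in the sense of Lemma~\ref{le:engel}, hence a CSA. The only delicate point in the argument is the initial reduction to the case where $\cH$ centralizes $\cK$, without which $\cM$ would fail to be a subalgebra; everything afterwards is a bookkeeping of eigenvalue signs allowed by the compactness of $K$ and the regularity of $a_0$.
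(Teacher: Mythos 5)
Your proof is correct, and it rests on the same two ingredients as the paper's: the hyperbolicity of ${\rm ad}(\fh_0)$ on $\cS\oplus\cU$ and the maximality of $\fa_0$ in $\cK^\ast$. You are also right, and more explicit than the paper, that the reduction of Lemma~\ref{le:commute} (so that $\cH$ centralizes $\cK$) is needed even for $\cH\oplus\fa_0\oplus\cT_1$ to be a subalgebra; the paper invokes this commutation silently inside the proof. Where you genuinely diverge is in the packaging. The paper argues directly on the normalizer: for $\op x\in\fN(\cH\oplus\fa_0\oplus\cT_1)$ it first kills the $\cS\oplus\cU$ components using the ${\rm ad}(\fh_0)$-invariance of the splitting and the invertibility of ${\rm ad}(\fh_0)$ there, and then kills the $\cK^\ast$ component by observing that the projection of $\op x$ to $\cK^\ast$ must normalize the CSA $\fa_0$, hence lie in it. You instead assemble a single element $\fh_1=\fh_0+a_0$ with $a_0$ regular in $\fa_0$ and check that ${\rm ad}(\fh_1)$ is invertible on $\cG/\cM$; this requires the extra (but harmless) bookkeeping that ${\rm ad}(\fh_0)$ and ${\rm ad}(a_0)$ commute and that ${\rm ad}(a_0)$ is skew-symmetric for an ${\rm Ad}(K)$-invariant metric, and in exchange it exhibits $\cM$ outright as the Engel subalgebra $L_0(\fh_1)$, tying the proposition directly to Lemma~\ref{le:engel}. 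The paper's route avoids choosing a regular element and the simultaneous-triangularization step; yours gives the slightly stronger and conceptually cleaner statement that $\cM$ is a minimal Engel subalgebra realized by one explicit element. Both are complete proofs.
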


\begin{proof}
Observe that the radical of the compact Lie group $K$ is abelian, hence $\cT_1$ is abelian.
More precisely, $\cT_1$ lies in the center of $\cK$, and the splitting $\cK = \cT_1 \oplus \cK^\ast$ is ${\rm ad}(\cK)$-invariant.
Since every element of $\cH$ commutes with every element of $\cK$, $\cH \oplus \fa_0 \oplus \cT_1$ is nilpotent. Let $\op x$ be an element in
the normalizer $\fN(\cH \oplus \fa_0 \oplus \cT_1)$. Since the decomposition $\cG = \cH \oplus \cK \oplus \cS \oplus \cU$ is ${\rm ad}\fh_0$-invariant,
it follows that $\op x$ must lie in $\cH \oplus \cK$: there are $\op y$, $\op z$, in $\cH$, $\cK$ respectively such that $\op x = \op y \oplus \op z$.
Since $\cH \subset \fN(\cH \oplus \fa_0 \oplus \cT_1)$, the $\cK$-component $\op z$ lies in $\fN(\cH \oplus \fa_0 \oplus \cT_1)$.
But since ${\rm ad} \op z$ vanishes on $\cH \oplus \cT_1$, it follows that the projection of $\op z$ in $\cK^\ast \approx \cK/\cT_1$ must normalize $\fa_0$. Since $\fa_0$ is a maximal abelian subalgebra
of $\cK^\ast$, hence a CSA of $\cK^\ast$ (\cite[Corollary p. 80]{humphreys}), the projection of $\op z$ in $\cK^\ast$ lies in $\fa_0$,
hence $\op z$ lies in $\cT_1 \oplus \fa_0$.

We have proved that $\cH \oplus \fa_0 \oplus \cT_1$ is its own normalizer. The proposition is proved.
\end{proof}

The construction described in \S~\ref{sub:elliptic} applies in general:
\begin{definition}
\label{defi:simplification}
The algebraic action $(G, K, \Gamma, \cH)$ is \textbf{simplified} if $K$ is semisimple.
A \textbf{simplification} of $(G, K, \Gamma, \cH)$ is an algebraic action $(G, K^\ast, \Gamma, \cH^\ast)$ where
$K^\ast$ is a Levi factor of $K$, and $\cH^\ast$ is the sum $\cH_\nu \oplus \cT_1$ where $\cT_1$ is the (nil)radical the Lie algebra of $K$
and $\cH_\nu$ the graph of a morphism $\nu: \cH \to \cK^\ast$ contained in the centralizer of $\cK$.
\end{definition}

Hence, $(G, K, \Gamma, \cH)$ admits an abelian extension by a simplified algebraic action.

\begin{lemma}
\label{le:simplyconnected}
Assume that $(G, K, \Gamma, \cH)$ is simplified. Then up to commensurability, one can assume that $G$ is simply-connected.
\end{lemma}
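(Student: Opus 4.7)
The plan is to apply item $(6)$ of the commensurability list of \S~\ref{sub:commen} to the universal covering map $\pi : \tilde{G} \to G$, exploiting the fact that in the simplified case the lift of $K$ to $\tilde{G}$ remains compact. First I would observe that since $K$ is compact and semisimple, its fundamental group is finite, so the connected Lie subgroup $\tilde{K} \subset \tilde{G}$ tangent to $\cK$ is a finite cover of $K$ --- hence compact --- and the central subgroup $\Pi_0 := \Pi \cap \tilde{K}$, which equals the kernel of $\tilde{K} \to K$, is finite.

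Next I would form the central quotient $\bar{G} := \tilde{G}/\Pi_0$ and set $\bar{K} := \tilde{K}/\Pi_0 \cong K$ and $\bar{\Gamma} := \pi^{-1}(\Gamma)/\Pi_0$. Because $\Gamma \cap K = \{e\}$, any $\tilde\gamma \in \pi^{-1}(\Gamma) \cap \tilde{K}$ satisfies $\pi(\tilde\gamma) \in \Gamma \cap K = \{e\}$, so $\pi^{-1}(\Gamma) \cap \tilde{K} = \Pi_0$, whence $\bar{\Gamma} \cap \bar{K} = \{e\}$ and $(\bar{G}, \bar{K}, \bar{\Gamma}, \cH)$ is a bona fide algebraic action. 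The induced projection $q : \bar{G} \to G$ has discrete kernel $\Pi/\Pi_0$ contained in $\bar{\Gamma}$, satisfies $q(\bar{K}) = K$, $q(\bar{\Gamma}) = \Gamma$ and $q_\ast(\cH) = \cH$, so item $(6)$ exhibits $(\bar{G}, \bar{K}, \bar{\Gamma}, \cH)$ as commensurable to $(G, K, \Gamma, \cH)$. Finally, since $\tilde{G}$ is the universal cover of $\bar{G}$ with finite central deck group $\Pi_0$ lying inside $\bar{K}$, the lifted data $(\tilde{G}, \tilde{K}, \pi^{-1}(\Gamma), \cH)$ forms a generalized algebraic Anosov action in the sense of Definition~\ref{def:algebraicaction2}, with simply connected ambient group $\tilde{G}$ and the same underlying manifold $\pi^{-1}(\Gamma)\backslash \tilde{G} / \tilde{K} = \bar{\Gamma}\backslash \bar{G} / \bar{K}$.

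The main obstacle is precisely this last step. Whenever the map $\pi_1(K) \to \pi_1(G)$ has non-trivial image, $\Pi_0$ is non-trivial and lies inside $\tilde{K}$, so the action of $\pi^{-1}(\Gamma)$ on $\tilde{G}/\tilde{K}$ is not free; consequently the passage from $\bar{G}$ to $\tilde{G}$ is not a commensurability in the strict sense of items $(1) - (6)$, which all presuppose a free lattice action. The lemma's conclusion must therefore be read in the extended framework of generalized algebraic actions set up at the start of \S~\ref{sec:classification}, where $K \cap \Gamma$ is permitted to be a finite central subgroup of $G$ --- exactly the situation produced by the construction above. The interposition of $\bar{G}$ is unavoidable: it absorbs the obstruction $\Pi_0$ through a genuine item $(6)$ move, after which the only remaining step is the (generalized) finite central covering $\tilde{G} \to \bar{G}$.
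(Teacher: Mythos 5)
Your argument is essentially the paper's: the printed proof consists precisely of the observation that, $\cK$ being compact semisimple, the connected subgroup $\tilde{K} \subset \tilde{G}$ tangent to $\cK$ is compact with $K = \pi(\tilde{K})$, followed by an appeal to item $(6)$ for the universal covering $\pi: \tilde{G} \to G$. Your additional care about the possible non-triviality of $\Pi_0 = \Pi \cap \tilde{K}$ --- and hence about the lifted action being free only after interposing $\tilde{G}/\Pi_0$, the last covering step living in the \emph{generalized} framework of Definition~\ref{def:algebraicaction2} --- is a correct refinement of a point the paper leaves implicit in the setup at the start of \S~\ref{sec:classification}.
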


\begin{proof}
Since $K$ is semisimple, the Lie algebra $\cK$ is compact semisimple. The connected Lie subgroup $\tilde{K}$
of $\tilde{G}$ tangent to $\cK$ is therefore compact. Observe that $K$ is the image of $\tilde{K}$ by $p$. The Lemma follows from
item $(6)$ of the definition of commensurability.
\end{proof}

We will also need the following Lemmas, which hold in all cases, simplified or not:

\begin{lemma}
\label{le:KpasdansN}
Up to commensurability, $\cK \cap \cN$ is trivial.
\end{lemma}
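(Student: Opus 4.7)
The plan is to apply item $(5)$ of Definition~\ref{def:equiv} to a normal torus of $G$ contained in $K$, reducing to a commensurable action in which $\cK \cap \cN$ vanishes. Set $\cL := \cK \cap \cN$, and let $L$ denote the connected Lie subgroup of $K$ with Lie algebra $\cL$.

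First, $\cL$ is simultaneously nilpotent (as a subalgebra of $\cN$) and reductive (as a subalgebra of the compact $\cK$), hence abelian, so $L$ is a torus. More is true: for $X \in \cL$, the derivation ${\rm ad}(X)$ of $\cG$ is semisimple with purely imaginary eigenvalues because $X \in \cK$, while its restriction to the nilpotent ideal $\cN$ is nilpotent because $X \in \cN$. Being at once semisimple and nilpotent, ${\rm ad}(X)|_{\cN} = 0$, so $\cL \subset \cZ(\cN)$, i.e.\ $L$ lies in the center $Z(N)$ of the nilradical.

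The next step upgrades this to $L \subset Z(G)$. Let $T$ be the maximal compact subgroup of the connected abelian Lie group $Z(N)$: it is a torus, characteristic in $Z(N)$ and hence in $N$, and therefore normal in the connected group $G$. Since $\mathrm{Aut}(T) \cong \op{GL}(\dim T, \ZZ)$ is discrete, the continuous conjugation action of the connected group $G$ on $T$ is trivial, whence $T \subset Z(G)$. Consequently $L \subset T \subset Z(G)$, so $L$ is a central (in particular normal) torus in $G$.

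Finally, item $(5)$ produces the commensurable action $(G/L, K/L, \bar{\Gamma}, \bar{\cH})$. Since $\cL$ is central, any nilpotent ideal of $\cG/\cL$ pulls back to a nilpotent ideal of $\cG$ containing $\cL$, which must lie in $\cN$; thus the nilradical of $\cG/\cL$ is exactly $\cN/\cL$, and the new intersection is $(\cK/\cL) \cap (\cN/\cL) = (\cK \cap \cN)/\cL = 0$. The main technical point is the normality of $L$ in $G$; the trick for that is to enlarge $L$ to the canonical torus $T \subset Z(N)$, whose normality is automatic from the rigidity of tori (the discreteness of $\mathrm{Aut}(T)$) coupled with the connectedness of $G$.
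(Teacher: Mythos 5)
Your proof is correct, but it takes a genuinely different route from the paper's. The paper passes to the universal cover $\tilde{G}$, where the torus $(K\cap N)^0$ unwraps to a free abelian subgroup $\tilde{K}_0$ meeting the (central) deck group $\Pi$ in a lattice $\Pi_0$, and then invokes Malcev rigidity (uniqueness of the Lie morphism $\tilde{K}_0\hookrightarrow\tilde{N}$ extending $\Pi_0\hookrightarrow\tilde{N}$, cited from \cite{vinberg2}) to see that every conjugate $g\tilde{K}_0g^{-1}$ coincides with $\tilde{K}_0$, whence $\tilde{K}_0$ is normal and Remark~\ref{rk:noidealK} applies. You instead stay in $G$ and argue by linear algebra: for $X\in\cK\cap\cN$ the operator ${\rm ad}(X)|_{\cN}$ is simultaneously semisimple (compactness of $K$) and nilpotent (nilpotency of $\cN$), hence zero, so $\cK\cap\cN\subset\cZ(\cN)$; you then promote this to centrality in $G$ via the characteristic maximal torus of $Z(N)$ and the discreteness of its automorphism group, and finish with item $(5)$. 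Your approach is more elementary and self-contained (no universal cover, no appeal to rigidity of lattices in nilpotent groups), and it yields the slightly stronger intermediate statement that $\cK\cap\cN$ is central in $\cG$; the paper's argument is shorter once the Malcev uniqueness theorem is granted. One small point to tighten: the bare claim that $L$ is a torus does not follow from abelianness of $\cL$ alone (a connected immersed subgroup with abelian Lie algebra need not be closed); the clean justification is that $L$ is the identity component of the closed subgroup $K\cap N$ of the compact group $K$, hence compact, and this is also needed so that the quotient in item $(5)$ makes sense. With that observation your argument is complete, and your verification that the nilradical of $\cG/\cL$ is $\cN/\cL$ (via central extensions of nilpotent algebras being nilpotent) is a step the paper leaves implicit.
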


\begin{proof}
Since $N$ is nilpotent, the compact ${N} \cap K$ is abelian. At the universal cover level, the identity component $\tilde{K}_0$ of the intersection $\tilde{N} \cap \tilde{K}$ is a free abelian subgroup whose intersection with $\Pi$ is a lattice $\Pi_0$ in $\tilde{K}_0$. According to \cite[Theorem 2.7, page 45]{vinberg2},
the inclusion $\tilde{K}_0 \hookrightarrow \tilde{N}$ is the unique Lie morphism extending the inclusion $\Pi_0 \hookrightarrow \tilde{N}$.

But $\Pi_0 \subset \Pi$ is contained in the center of $\tilde{G}$, hence, for every element $g$ of $\tilde{G}$, the inclusion
$g\tilde{K}_0g^{-1} \hookrightarrow \tilde{N}_0$ is also an extension of $\Pi_0 \hookrightarrow \tilde{N}$, hence must coincide
with $\tilde{K}_0 \hookrightarrow \tilde{N}$. Therefore, $\tilde{K}_0$ is normal in $\tilde{G}$. The lemma follows from Remark~\ref{rk:noidealK}.
\end{proof}

\begin{lemma}
\label{le:sanscentre}
Up to commensurability, $(G, K, \Gamma, \cH)$ is a central extension over an algebraic Anosov action $(G', K', \Gamma', \cH')$ such that the center of $G'$ is trivial.
\end{lemma}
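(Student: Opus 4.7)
The plan is to quotient $G$ by the identity component $A := Z(G)_0$ of its center, showing that $\operatorname{Lie}(A)$ lies in $\cH$ after the preparatory commensurability reductions. This realizes $(G,K,\Gamma,\cH)$ as a central extension in the sense of Definition~\ref{def:normal}, and the main work is to check that the quotient has trivial center.

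First I will invoke the previously established reductions: by Lemma~\ref{le:commute} we may assume $[\cH,\cK]=0$, and by Remark~\ref{rk:noidealK} that $\cK$ contains no nontrivial ideal of $\cG$, so in particular $\cZ(\cG)\cap\cK=0$. The core step is then to show $\cZ(\cG)\subset\cH$. Since any $z\in\cZ(\cG)$ commutes with the Anosov element $\fh_0$ and ${\rm ad}(\fh_0)$ is invertible on $\cS\oplus\cU$, one has $\cZ(\cG)\subset\cH\oplus\cK$. Writing $z=h+k$ with $h\in\cH$, $k\in\cK$ and analysing ${\rm ad}(z)=0$ on each of the ${\rm ad}(\cH)$- and ${\rm ad}(\cK)$-invariant summands $\cH$, $\cK$, $\cS$, $\cU$ --- using that ${\rm ad}(k)$ is skew-adjoint for an ${\rm Ad}(K)$-invariant metric on $\cG$ (compactness of $K$) while ${\rm ad}(h)$ commutes with ${\rm ad}(\fh_0)$ (since $h\in\cZ(\cH)$) --- one deduces ${\rm ad}(k)=0$, whence $k\in\cZ(\cG)\cap\cK=0$ and $z=h\in\cH$.

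With $\operatorname{Lie}(A)=\cZ(\cG)\subset\cH$ established, I set $p:G\to G':=G/A$ and define $K'=p(K)$, $\Gamma'=p(\Gamma)$, $\cH'=p_\ast(\cH)$. Items (1)--(5) of Definition~\ref{def:normal} then follow directly from the construction; $\cH'\cap\cK'=0$ holds because $\cZ(\cG)\cap\cK=0$; and the projected element $p_\ast(\fh_0)$ remains Anosov because the hyperbolic splitting of $\cG$ is quotiented only in the central summand $\cH$, so $(G',K',\Gamma',\cH')$ is algebraic Anosov and $(G,K,\Gamma,\cH)$ is a central extension over it.

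The main obstacle is to ensure that the center of $G'$ is actually trivial rather than merely reduced: a priori, elements of $\cZ^{(2)}(\cG)\setminus\cZ(\cG)$ produce a new center in the quotient. Iterating the construction terminates by dimension count, but Remark~\ref{rk:l+l} warns that a composition of central extensions need not be a central extension. I would resolve this either by sharpening the bracket analysis above to show that, after the preparatory reductions, no element of $\cH\setminus\cZ(\cG)$ can satisfy $[\,\cdot\,,\cG]\subset\cZ(\cG)$ (so one quotient already suffices), or by passing to the universal cover $\tilde G$ and exploiting the centrality of $\pi_1(G)$ together with item~(6) of the commensurability list to consolidate the iterated central extensions into a single one, which is where I expect the substantive difficulty of the proof to lie.
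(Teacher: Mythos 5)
Your overall shape (quotient by the connected center, which should sit inside $\cH$, then deal with the residual discrete center) matches the paper's, but the proposal is missing the one idea that makes the construction work. For $(G',K',\Gamma',\cH')$ to be an algebraic Anosov action --- and for $(G,K,\Gamma,\cH)$ to be a central extension over it with compact torus fibers --- you need $\Gamma\cap A$ to be a \emph{lattice} in the central subgroup $A$ you divide by, equivalently $p(\Gamma)$ to be discrete in $G/A$. Nothing in your argument provides this, and it is not automatic: the projection of a uniform lattice along a closed connected central subgroup can be dense (compare the example $G=\RR\times\op{SO}(3,\RR)$ in \S~\ref{sub:ss} of the appendix, where the lattice meets the radical trivially). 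This is precisely what the paper's proof is built around: following the scheme of Lemma~\ref{le:centrefini}, it takes the closure, in $\op{Isom}(\Gamma\backslash G)$, of the group of isometries induced by translations by the full center $Z$; by Ascoli--Arzel\`a this closure is a compact abelian group $T_0\times(\mathrm{finite})$ induced by left translations by a closed subgroup $A$ of $G$ for which $A\cap\Gamma$ \emph{is} automatically a lattice, and only then does it check that $\mathfrak A=\op{Lie}(A_0)$ lies in $\cH$. (It does so not by your skew-adjointness computation but by noting $A_0\subset N$, so $\mathfrak A\subset(\cH\oplus\cK)\cap\cN=(\cH\cap\cN)\oplus(\cK\cap\cN)=\cH\cap\cN$ by Lemmas~\ref{le:KpasdansN} and \ref{le:spectre}; your claim that the bracket analysis forces $\op{ad}(k)=0$ is also unproved as stated --- you do not rule out $\op{ad}(h)=-\op{ad}(k)$ acting by a nontrivial rotation on $\cS\oplus\cU$ --- though that part is repairable by the nilradical argument.)

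The endgame you leave open is resolved in the paper by neither of your two suggested strategies. After dividing by $A_0$, one reruns the same isometry-closure argument on $\Gamma^\sharp\backslash G^\sharp$: the closure is now a \emph{finite} group, which says exactly that the residual center $Z^\sharp$ meets $\Gamma^\sharp$ in a finite-index subgroup. Dividing by $Z^\sharp$ is then a single commensurability move of type $(6)$ (an epimorphism with discrete kernel contained in the lattice, up to finite index), not a further central extension, so no iteration occurs and Remark~\ref{rk:l+l} is never an obstacle. In short: the statement is plausible along your lines, but the two load-bearing steps --- cocompactness of $\Gamma\cap A$ in $A$, and the finite-index property of the residual center --- both come from the compactness-of-the-isometry-closure argument, which is absent from your proposal.
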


\begin{proof}
The main argument is the one used in the proof of Lemma~\ref{le:centrefini} in the appendix.
Let $Z$ be the center of $G$. Consider any left invariant metric on $G$, and the induced metric on the compact quotient $\Gamma\backslash G$.
The (left or right) translation by $Z$ induces a group of isometries, all commuting with the right translations by elements of $G$. By Ascoli-Arzela
Theorem, the closure of this group of isometries is a compact abelian subgroup $T$ of $\op{Isom}(\Gamma\backslash G)$, whose elements still
commute with right translations by elements of $G$. Therefore, $T$ is the product of a torus $T_0$ by a finite abelian group, which is
induced by left translations by a closed subgroup $A$ of $G$, such that $A \cap \Gamma$ is a lattice in $A$. Let $A_0$ be the identity component of $A$.
We have a central exact sequence:
$$ 0 \to A_0 \to G \to G^\sharp \to 0$$
where $G^\sharp$ is the quotient $G/A_0$.
Let $\mathfrak A$ be the Lie algebra of $A_0$. Clearly, no element of $\mathfrak A$ may have a component in $\cS \oplus \cU$, hence
$\mathfrak A$ is contained in $\cK \oplus \cH$. But observe also that $A_0$ is contained in the nilradical of $G$,
hence $\mathfrak A$ is contained in $(\cK \oplus \cH) \cap \cN = (\cK \cap \cN) \oplus (\cH \cap \cN) = \cH \cap \cN$ (cf. Lemmas~\ref{le:KpasdansN}
and \ref{le:spectre}). In other words, the $T_0$-orbits are tori contained in the orbits of the Anosov action.

The intersection $A_0 \cap \Gamma$ is a lattice in $A_0$. It follows that the projection $\Gamma^\sharp$ of $\Gamma$ in $G^\sharp$ is a lattice.
Let $K^\sharp$ be the projection of $K$ in $G^\sharp$, and $\cH^\sharp$ the projection of $\cH$ in the Lie algebra $\cG^\sharp$ of $G^\sharp$. Clearly,
$(G, K, \Gamma, \cH)$ is a central extension over $(G^\sharp, K^\sharp, \Gamma^\sharp, \cH^\sharp)$.

The center of $G^\sharp$ is the quotient $Z^\sharp=Z/A$. If we reproduce the initial study of the induced action on $\Gamma^\sharp\backslash G^\sharp$,
we now obtain a \textit{finite} group of isometries of $\Gamma^\sharp\backslash G^\sharp$. It means precisely that $Z^\sharp \cap \Gamma^\sharp$ has finite index
in $Z^\sharp$. The Lemma is proved by taking the quotient $G'=G^\sharp/Z^\sharp$.

\end{proof}


\subsection{Solvable case}
\label{sub:solvablecase}

\begin{theorem}
\label{th:solvable}
Every algebraic Anosov action $(G, K, \Gamma, \cH)$ where $G$ is a solvable Lie group is
commensurable to a nil-suspension over the suspension of an Anosov $\ZZ^{p}$-action on the torus ($p \in \NN^*$).

In particular, up to commensurability, the compact Lie group $K$ is finite (hence trivial if connected).
\end{theorem}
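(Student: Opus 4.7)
The plan is to combine the structural lemmas of Section~\ref{sec.preli} with the very restrictive shape of solvable Lie algebras, in three stages: reduction, elimination of the compact part of $K$, and identification of the nil-suspension. First, I apply a cascade of reductions. By Proposition~\ref{pro:CSA}, $\cH \oplus \fa_0 \oplus \cT_1$ is a CSA of $\cG$, where $\cK = \cK^{\ast} \oplus \cT_1$ is the Levi decomposition; since $\cG$ is solvable it contains no non-trivial compact semisimple subalgebra, so $\cK^{\ast} = 0$, hence $\cK = \cT_1$ is abelian and $\cH \oplus \cK$ is a CSA. Lemma~\ref{le:commute} lets me assume $\cK$ centralizes $\cH$, and Lemma~\ref{le:KpasdansN} that $\cK \cap \cN = \{0\}$. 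Finally, Lie's theorem gives $[\cG, \cG] \subset \cN$, which together with Lemma~\ref{le:commutator} yields $\cS \oplus \cU \subset \cN$, so that $\cG/\cN$ is abelian.

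Next, I set up the fibration and eliminate the compact part of $K$. The nilradical $N$ is closed and normal, and Mostow's theorem on lattices in solvable Lie groups shows that $\Lambda := \Gamma \cap N$ is cocompact in $N$ and $\bar{\Gamma} := \Gamma N/N$ is a cocompact lattice in the abelian connected Lie group $G/N \cong \RR^{s} \times \TT^{t}$. Since $\cK \cap \cN = \{0\}$, the projection $q: G \to G/N$ embeds the torus $K$ as a subtorus $q(K) \subset \{0\} \times \TT^{t}$. The critical claim is that, up to commensurability, the continuous part of $K$ can be killed. After replacing $\Gamma$ by a finite-index subgroup (item~(1)), the image of $\bar{\Gamma}$ in $\TT^{t}$ can be trivialized; choosing a subtorus $T' \subset \TT^{t}$ complementary to $q(K)$ and setting $G_{1} := q^{-1}(\RR^{s} \times T')$, one verifies that $\Gamma \subset G_{1}$, that after modifying $\cH$ via item~(4) by a suitable graph $\zeta: \cH \to \cK$ one has $\cH \subset \cG_{1}$, and that $G = G_{1} \cdot K$ with $G_{1} \cap K = \{1\}$; item~(3) then gives the equivalent action $(G_{1}, \{1\}, \Gamma, \cH)$. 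The case where $q(K) \subsetneq \TT^{t}$ (excess compact directions in $G/N$) is absorbed by an auxiliary appeal to item~(6) on the universal cover, after which $K$ becomes finite.

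With $K$ trivial, the identification of the nil-suspension structure is routine. Set $N_{0} := [N, N]$, a nilpotent connected normal subgroup of $G$, and write $G_{0} := G/N_{0}$, which is the semidirect product $V \rtimes A$ where $V := N/[N,N] \cong \RR^{p}$ is abelian and $A := G/N \cong \RR^{k}$. Since $\cH$ is a CSA of the solvable $\cG$, its projection onto $\cG/\cN$ is surjective, and the image of $\Gamma$ in $G_{0}$ is a semidirect-product lattice of the form $\Delta \rtimes \ZZ^{k}$. The Anosov element $\fh_{0}$ descends to act hyperbolically on $V/(V \cap \overline{\cH})$ by Proposition~\ref{pro:nilanosov}, so the base $(G_{0}, \{1\}, \Delta \rtimes \ZZ^{k}, \overline{\cH})$ is the suspension of an Anosov $\ZZ^{k}$-action on the torus $\Delta \backslash V$, and $(G, \{1\}, \Gamma, \cH)$ is a nil-suspension over it with nilpotent kernel $N_{0}$. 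The main obstacle in the whole argument is the middle step: reconciling the three potentially misaligned structures (the compact part of $G/N$, the subtorus $q(K)$, and the finite piece of $\bar{\Gamma}$ inside $\TT^{t}$) so as to apply item~(3) cleanly; this is where the technical passage to finite-index lattices and to suitable covers is essential.
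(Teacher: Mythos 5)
Your overall strategy (first kill $K$, then exhibit the nil-suspension by quotienting inside the nilradical) matches the paper's, but two of your steps have genuine gaps. In stage 2, the assertion that ``after replacing $\Gamma$ by a finite-index subgroup, the image of $\bar{\Gamma}$ in $\TT^{t}$ can be trivialized'' is false: a lattice in $\RR^{s}\times\TT^{t}$ can project \emph{densely} into the compact factor (already $\ZZ\cdot(1,\alpha)\subset\RR\times\SS^{1}$ with $\alpha$ irrational does so), and density is not destroyed by passing to a finite-index subgroup. Consequently there is no reason to have $\Gamma\subset G_{1}=q^{-1}(\RR^{s}\times T')$ for a subtorus $T'$ chosen only to be complementary to $q(K)$. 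The order of operations must be reversed: one first uses item (4) to replace $\cH$ by the graph of a suitable map $\zeta:\cH\to\cK$ so that $\bar{H}\cap\bar{\Gamma}$ becomes a lattice in $\bar{H}=p(H)$ (possible because the projection of $\bar{\Gamma}$ to $\bar{H}$ along the compact factor $\bar{K}$ is a lattice, and one lifts the $\bar{K}$-components of its generators into $\cK$), and only then sets $G'=p^{-1}(\bar{H})$; with that choice $G/G'\cong\bar{K}$ is compact, the image of $\bar{\Gamma}$ in it is discrete hence finite, so $\Gamma\subset G'$ up to finite index and item (3) applies, with $K\cap G'$ finite by Lemma~\ref{le:KpasdansN}.

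The more serious gap is in stage 3: quotienting by $N_{0}=[N,N]$ alone does not yield a base which is the suspension of an Anosov $\ZZ^{k}$-action on a torus, because $\cH\cap\cN$ need not be contained in $[\cN,\cN]$. When it is not, the projected algebra $\bar{\cH}$ meets $V=N/[N,N]$ non-trivially, the $\bar{H}$-orbits in $G_{0}$ have a component tangent to the fibers $\Delta\backslash V$, and the base action is only a central extension over a suspension --- not a suspension. Your own phrasing betrays the problem: you invoke hyperbolicity on $V/(V\cap\bar{\cH})$ and then claim a suspension structure on all of $\Delta\backslash V$. The paper's example with $N=V\times\bigwedge^{2}V$, where $\cH\cap\cN$ contains a vector $e_{0}\in V$ outside $[\cN,\cN]$, shows this situation genuinely occurs. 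The repair is to quotient by the larger subalgebra $\cN_{H}=[\cN,\cN]+(\cH\cap\cN)$, and the non-routine point your proposal omits entirely is the verification that $\cN_{H}$ is an ideal of $\cG$ (a bracket computation using $\cS\oplus\cU\subset\cN$ together with the decomposition $\cG=\cS\oplus\cU\oplus\cH$ once $\cK$ is trivial) and that $\Gamma\cap N_{H}$ is a lattice in $N_{H}$, so that the quotient really defines a nil-suspension. This is the technical heart of the theorem and cannot be dismissed as routine.
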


This \S~is devoted to the proof of Theorem~\ref{th:solvable}.

Recall that we are assuming, up to commensurability, that $K$ is connected.
Hence, since $G$ is solvable, the compact subgroup $K$ is a torus, and $\cH \oplus \cK$ is nilpotent.
Let $\cN$ denotes as usual the nilradical of $\cG$. Then $[\cG, \cG] \subset \cN$. Let $\bar{\cG}$ be the quotient algebra $\cG/\cN$. It is abelian,
$\approx \RR^p$ for some positive integer $p$.

According to Lemma~\ref{le:commutator}, $\cS$ and $\cU$ are contained in $\cN$. Therefore, the projections of $\cS$ and $\cU$ in $\bar{\cG}$
are trivial, and $\bar{\cG} = \bar{\cH} \oplus \bar{\cK}$, where $\bar{\cH}$ and $\bar{\cK}$ denote the projections of $\cH$ and $\cK$ in $\bar{\cG}$.

Similarly, the abelian Lie group $\bar{G} = G/N$
is the product $\bar{H} \times \bar{K}$ of the projections of $H$ and $K$ by $p: G \to G/N$. Using item $(4)$ of the definition of commensurability,
one can change $\cH$ by the graph in $\cK \oplus \cH$ of some linear map $\cH \to \cK$ so that now the intersection
$\bar{H} \cap \bar{\Gamma}$ is a lattice in $\bar{H}$. Then, the inverse image $p^{-1}(\bar{H})$ is
a normal subgroup $G'$ such that $G/G'$ is compact. In particular, $\Gamma \cap G'$ has finite index in $\Gamma$, ie. one
can assume up to commensurability that $\Gamma$ is contained in $G'$. Moreover, since $\bar{K}= p(K)$ is
supplementary to $\bar{H}$ in $\bar{G}$, the group $G$ is generated by $G'$ and $K$.
Then, by item $(3)$ of the definition of commensurability, $(G, K, \Gamma, \cH)$ is commensurable to $(G', K \cap G', \Gamma, \cH)$.
But, by construction of $G'$, the projection in $G'/N$ of $K \cap G'$ is trivial; therefore,
according to Lemma~\ref{le:KpasdansN}, $K \cap G'$ is finite. It follows that, up to commensurability, one can assume that
$K$ is trivial, ie. that the action is simplified.

We have an exact sequence:
$$0 \to \cN \to \cG \to \bar{\cH} \to 0$$

Since $G$ can be assumed simply-connected (cf. Lemma~\ref{le:simplyconnected})
the nilradical $N$ and the abelian factor $\bar{G}$ are simply-connected.
Hence, we also have a 
exact sequence:
\begin{equation*}
\label{eq:exx}
1 \to N \to G \to \bar{H} \to 1
\end{equation*}

Let $\Lambda$ be the intersection $N \cap \Gamma$ and $\bar{\Gamma}$ the image $p(\Gamma)$. According to Theorem~\ref{thm:latticesol}
$\Lambda$ and $\bar{\Gamma}$
are lattices in $N$, $\bar{G}$, respectively. In particular, $\bar{\Gamma} \approx \ZZ^p$.

Let $N_1 = [ N, N ]$ be the first term of the lower central serie of $N$, and let $G_1 := G/N_1$. According to Theorem~\ref{th:latticenil} the intersection $\Lambda \cap N_1$ is a lattice $\Lambda_1$ in $N_1$.
Consider the subalgebra $\cN_H := \cN_1 + (\cH \cap \cN)$. It is contained in $\cN$, hence nilpotent. Moreover, we claim that
it is an ideal in $\cG$. Indeed, let $\op x = \op y + \op z$ be an element of $\cN_H$ with $\op y \in \cN_1$ and $\op z \in \cH \cap \cN$.
Since $\cS \oplus \cU \subset \cN$ and $\op x \in \cN$,
we clearly have ${\rm ad}{\op x}(\cS \oplus \cU) \subset [\cN, \cN] = \cN_1 \subset \cN_H$.
Now ${\rm ad}{\op y}(\cH) \subset [\cN_1, \cG] \subset \cN_1 \subset \cN_H$ and
${\rm ad}{\op z}(\cH) \subset [\cN, \cH] \cap [\cH, \cH] \subset \cN \cap \cH \subset \cN_H$,
hence ${\rm ad}{\op x}(\cH) \subset \cN_H$. Since $\cG = \cS \oplus \cU \oplus \cH$, our claim follows.

Let $N_H$ be the normal subgroup tangent to $\cN_H$. Since $\Gamma \cap N_1$ is a lattice in $N_1$ and $\Gamma \cap (N \cap H)$ is a lattice in $N \cap H$,
it follows that $\Gamma \cap N_H$ is a lattice in the nilpotent Lie group $N_H$. Hence, all the criteria for nil-suspensions are satisfied:
$(G, \Gamma, \cH)$ is a nil-suspension over the algebraic action $(G', \Gamma', \cH')$, where $G'=G/N_H$, $\cH' = \cH/(\cH \cap \cN_H) = \cH/(\cH \cap \cN)$
and $\Gamma'$ is the projection of $\Gamma$ in $G'$ (observe that $\Gamma'$ is a uniform lattice since $\Gamma \cap N_H$ is a lattice in $N_H$).

Let $\cG'$ be the Lie algebra of $G'$: its nilradical $\cN' = \cN/\cN_H$ (which is abelian) has now a trivial intersection with $\cH'$, hence
$\cN' = \cS' \oplus \cU'$. We still have $\cG'/\cN' \approx \cG/\cN \approx \bar{\cH}$.

It follows that $\cH'$ is abelian, and the exact sequence:
$$0 \to \cU' \oplus \cS' \to \cG' \to \bar{\cH} \to 0$$
is split. Therefore, $G'$ is a semi-direct product $N' \rtimes H'$. Once more, $\Lambda' := \Gamma' \cap N'$ and $\Delta' = \Gamma' \cap H'$ are lattices
in respectively $N'$, $H'$. The semi-direct product $\Delta' \ltimes \Lambda'$ is a lattice of $G'$ contained in $\Gamma'$. Up to commensurability,
one can assume the equality $\Gamma' = \Delta' \ltimes \Lambda'$.

Geometrically, this result means that the action of $H'$ is a suspension of an action of $\Delta' \approx \ZZ^p$
on the torus $\Lambda'\backslash N'$. This action is Anosov since the action of an Anosov element $h_0$ in $\Delta'$
is hyperbolic.

Theorem~\ref{th:solvable} is proved.

\subsection{Semisimple case}
\label{sub:sscase}
In this section, we assume that $G$ is semisimple.

\begin{theorem}
\label{th:ss}
Every algebraic Anosov action $(G, K, \Gamma, \cH)$ where $G$ is a semisimple Lie group is commensurable to
a modified Weyl chamber action.
\end{theorem}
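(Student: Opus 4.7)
The plan is to use Proposition~\ref{pro:CSA} together with the real Jordan decomposition in a semisimple Lie algebra in order to force $\cH\oplus\cK$ to coincide with the centralizer $\fa\oplus\mathfrak m$ of a Cartan subspace; once this is established, the modified Weyl chamber normal form is reached by replacing $\cH$ inside $\cH\oplus\cK$ using item~(4) of Definition~\ref{def:equiv}.

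After reducing via Lemma~\ref{le:commute} to $[\cH,\cK]=0$, Proposition~\ref{pro:CSA} provides a CSA $\fH:=\cH\oplus\fa_0\oplus\cT_1$ of $\cG$. Since CSAs of semisimple Lie algebras are abelian and consist of semisimple elements, $\cH$ is abelian and every element of $\fH$ admits a real Jordan decomposition as the sum of a hyperbolic and an elliptic semisimple part; write $\fH=\fH_{\rm hyp}\oplus\fH_{\rm ell}$ accordingly. The compact subalgebra $\fa_0\oplus\cT_1\subset\cK$ is elliptic, hence contained in $\fH_{\rm ell}$, so the hyperbolic projection $\fH\to\fH_{\rm hyp}$ restricts to a surjection $\cH\twoheadrightarrow\fH_{\rm hyp}$. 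Decompose the Anosov element as $\fh_0=\fh_0^{\rm hyp}+\fh_0^{\rm ell}$. Since the Jordan parts of a semisimple element commute with everything centralizing it, and $\cK$ commutes with $\fh_0$, both $\cH$ and $\cK$ centralize $\fh_0^{\rm hyp}$, so $\cH\oplus\cK\subseteq\cZ_\cG(\fh_0^{\rm hyp})$; conversely, the real parts of the eigenvalues of $\op{ad}(\fh_0)$ on $\cS\oplus\cU$ are non-zero by the Anosov hypothesis and come entirely from $\op{ad}(\fh_0^{\rm hyp})$, so $\cZ_\cG(\fh_0^{\rm hyp})\cap(\cS\oplus\cU)=0$ and the inclusion is an equality.

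I would then show that $\fH_{\rm hyp}$ is a Cartan subspace $\fa$ of $\cG$: any hyperbolic element $\fh'$ commuting with $\fH_{\rm hyp}$ lies in $\cZ_\cG(\fh_0^{\rm hyp})=\cH\oplus\cK$, so decomposes as $\fh'=\fh'_\cH+\fh'_\cK$ with $\fh'_\cK$ elliptic (as $\cK$ is compact) and $[\fh'_\cH,\fh'_\cK]=0$; additivity of the Jordan decomposition on commuting elements then forces $\fh'=(\fh'_\cH)^{\rm hyp}\in\fH_{\rm hyp}$. The crux is the regularity of $\fh_0^{\rm hyp}$ in $\fa$: if some restricted root $\alpha$ vanished on $\fh_0^{\rm hyp}$, then both $\cG^\alpha$ and $\cG^{-\alpha}$ would sit in $\cZ_\cG(\fh_0^{\rm hyp})=\cH\oplus\cK$, and pairing a non-zero $X\in\cG^\alpha$ with $\theta X\in\cG^{-\alpha}$ (where $\theta$ is a Cartan involution of $\cG$) would yield a non-compact copy of $\mathfrak{sl}(2,\RR)$ inside $\cH\oplus\cK$. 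But $\cH\oplus\cK$ is, thanks to $[\cH,\cK]=0$, the direct product of the abelian $\cH$ with the compact reductive $\cK$, so any semisimple subalgebra is contained in $\cK$ and is therefore compact --- a contradiction. Hence $\cZ_\cG(\fh_0^{\rm hyp})=\cZ_\cG(\fa)=\fa\oplus\mathfrak m$, with $\mathfrak m$ the compact centralizer of $\fa$, and $\cH\oplus\cK=\fa\oplus\mathfrak m$.

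To conclude, uniqueness of the Levi factor of the reductive Lie algebra $\cH\oplus\cK=\fa\oplus\mathfrak m$ identifies $\cK^\ast=[\mathfrak m,\mathfrak m]$; writing $\mathfrak m=\cK^\ast\oplus\cT_e$ one finds $\cT_1\subset\cT_e$ (the abelian part $\cT_1$ of $\cK$ is compact, hence cannot meet the non-compact direction $\fa$). For any subalgebra $\cH_e\subset\cT_e$ supplementary to $\cT_1$, the space $\cH':=\fa\oplus\cH_e$ is an abelian Lie subalgebra of $\fa\oplus\cT_e\subset\cH\oplus\cK$ supplementary to $\cK$, so item~(4) of Definition~\ref{def:equiv} yields the equivalence $(G,K,\Gamma,\cH)\sim(G,K,\Gamma,\cH')$, and $(G,K,\Gamma,\cH')$ is by construction a modified Weyl chamber action attached to the Weyl chamber action whose compact subgroup is the one tangent to $\mathfrak m$. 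The main technical step I expect will be the regularity argument in paragraph three, where one needs to extract the non-compact $\mathfrak{sl}(2,\RR)$ cleanly from the root space pair to contradict the compactness of the semisimple part of $\cH\oplus\cK$.
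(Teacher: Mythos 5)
Your proposal follows the same overall strategy as the paper's proof of Theorem~\ref{th:ss}: use Proposition~\ref{pro:CSA} to deduce that $\cH$ is abelian and consists of semisimple elements, identify $\cH\oplus\cK$ with the centralizer of a Cartan subspace, and then replace $\cH$ by a supplement of $\cK$ inside $\cH\oplus\cK$ containing $\fa$ via item~(4) of Definition~\ref{def:equiv}. The one sub-step where you genuinely diverge is the proof that $\cH\oplus\cK=\cZ(\fa)$. The paper (Lemma~\ref{le:chcsa}) chooses a Cartan subspace $\fa$ containing $\cH_{hyp}$ and argues softly: $\cZ(\fa)\subseteq\cZ(\fh_0^{hyp})=\cH\oplus\cK$, and conversely ${\rm ad}(\fa)$ preserves the splitting, acts trivially on $\cH$, and acts on the compact algebra $\cK$ with spectrum simultaneously real and purely imaginary, hence trivially, so $\cH\oplus\cK\subseteq\cZ(\fa)$. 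You instead prove that $\cH_{hyp}$ is itself already a maximal hyperbolic abelian subalgebra and that $\fh_0^{hyp}$ is regular in it, by extracting a non-compact copy of $\mathfrak{sl}(2,\RR)$ from a root pair $\pm\alpha$ vanishing on $\fh_0^{hyp}$ and contradicting the structure of $\cH\oplus\cK$ as (abelian)$\,\oplus\,$(compact). Both arguments are correct; the paper's avoids root spaces and the Cartan involution, while yours yields directly that $\cH_{hyp}$ is the whole Cartan subspace, a fact the paper only records afterwards in Remark~\ref{rk:CSAhyperbolic}. Your concluding rearrangement ($\cT_1\subset\cT_e$, $\cH'=\fa\oplus\cH_e$) matches the paper's.

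The one genuine omission is the preliminary reduction on the center of $G$. The paper begins by dividing by $\Gamma\cap Z(G)$, using Lemma~\ref{le:centrefini} and item~(6) of the commensurability moves, so that $Z(G)$ is finite. This is not cosmetic: Weyl chamber actions, hence modified ones, are defined in \S~\ref{sub:exweyl} only for $G$ with finite center, precisely because one needs the centralizer of $\fa$ to be $\fa\oplus\mathfrak{m}$ with $\mathfrak{m}$ tangent to a genuinely \emph{compact} subgroup of $G$ containing $K$. If $Z(G)$ is infinite (e.g.\ $G$ an infinite cover of $\op{SU}(1,2)$), the subgroup tangent to $\mathfrak{m}$ need not be compact, and your final assertion that $(G,K,\Gamma,\cH')$ is a modification of "the Weyl chamber action whose compact subgroup is the one tangent to $\mathfrak{m}$" does not parse. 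All of your Lie-algebra computations survive this reduction unchanged, so the repair is simply to insert it at the outset; but as written the proof does not quite close.
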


The section is entirely devoted to the proof of Theorem~\ref{th:ss}.

According to Lemma~\ref{le:centrefini} in the appendix, we can assume, dividing by the intersection between $\Gamma$ and the center of $G$,
that the center of $G$ is finite. In particular, $G$ is a finite covering of its own adjoint group, and we can define (modified) Weyl chamber actions on $G$.
Moreover, $G$ is "nearly" algebraic: we can decompose every element of $\cG$ as the sum of its
nilpotent, elliptic and hyperbolic parts (cf. \S~\ref{sub:ss}).

According to Proposition~\ref{pro:CSA}, the sum $\cH \oplus \cT_1 \oplus \fa_0$ where $\cT_1$ is the radical of $\cK$
and $\fa_0$ a maximal abelian subalgebra of $\cK^\ast$ is a CSA, that we denote by $\cA$. In particular, $\cH \oplus \cT_1 \oplus \fa_0$
is abelian, and every element of $\cH$
is semisimple (as compact subalgebras, $\cT_1$ and $\fa_0$ are elliptic).

Let now $\cH_{hyp}$ be the abelian algebra comprising hyperbolic components of $\cH$ (hence of $\cA$), and let $\fa$ be a Cartan subspace containing $\cH_{hyp}$. The centralizer ${\mathcal Z}(\fa)$ of $\fa$ is $\fa \oplus \cK_0$, where
$\cK_0$ is the Lie algebra of a compact subgroup $K_0$ of $G$ (cf. \S~\ref{sub:ss}).
The following Lemma is quite classical, and is an ingredient of the proof that CSA's are characterized by the maximal dimension
of their intersections with Cartan subspaces.

\begin{lemma}
\label{le:chcsa}
$\cH \oplus \cK = \fa \oplus \cK_0$
\end{lemma}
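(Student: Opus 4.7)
The plan is to identify both sides with the centralizer $\cZ_\cG(\cH_{hyp})$. First I would verify $\cH\oplus\cK = \cZ_\cG(\fh_0^h)$, where $\fh_0^h$ denotes the hyperbolic Jordan component of $\fh_0$. Since $\cH$ is abelian (as a subspace of the abelian CSA $\cA=\cH\oplus\cT_1\oplus\fa_0$) and centralizes $\cK$ by Lemma~\ref{le:commute}, the operator $\op{ad}(\fh_0)$ vanishes on $\cH\oplus\cK$, while the Anosov splitting forces only nonzero eigenvalues on $\cS\oplus\cU$; hence $\ker\op{ad}(\fh_0) = \cH\oplus\cK$. A joint-eigenspace analysis of the commuting operators $\op{ad}(\fh_0^h)$ and $\op{ad}(\fh_0^e)$, whose eigenvalues are respectively the real and imaginary parts of those of $\op{ad}(\fh_0)$, then yields $\ker\op{ad}(\fh_0^h) = \ker\op{ad}(\fh_0) = \cH\oplus\cK$.

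Next I would promote this to $\cZ_\cG(\cH_{hyp}) = \cH\oplus\cK$. The inclusion $\cZ(\cH_{hyp})\subset\cZ(\fh_0^h) = \cH\oplus\cK$ is automatic since $\fh_0^h\in\cH_{hyp}$. For the reverse inclusion, $\cH$ commutes with $\cH_{hyp}$ because both sit inside the abelian $\cA$, and $\cK$ commutes with $\cH_{hyp}$ by the equivariance of the Jordan decomposition: since $\cK$ centralizes every $h\in\cH$, it also centralizes the hyperbolic and elliptic parts of $h$ separately.

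The decisive step will be to show $\cH_{hyp} = \fa$. Starting from $\fa\subset\cZ(\cH_{hyp}) = \cH\oplus\cK$, any $\fh\in\fa$ has a unique decomposition $\fh = h + k$ with $h\in\cH$ and $k\in\cK$. I would then compute the Jordan decomposition of $\fh$ in $\cG$: writing $h = h^h + h^e$, one obtains $\fh = h^h + (h^e + k)$, where $h^h$ is hyperbolic and $h^e + k$ is elliptic. Indeed, the pieces $h^e$ and $k$ are commuting elliptic elements (by Jordan-equivariance applied to $[h,k]=0$), so their sum is again elliptic and commutes with $h^h$. Since $\fh\in\fa$ is itself hyperbolic, uniqueness of the Jordan decomposition forces $\fh = h^h\in\cH_{hyp}$ (and incidentally $k = -h^e$). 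Hence $\fa\subset\cH_{hyp}$, and combined with the reverse inclusion $\cH_{hyp}\subset\fa$ we conclude $\cH_{hyp} = \fa$.

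It then follows that $\fa\oplus\cK_0 = \cZ_\cG(\fa) = \cZ_\cG(\cH_{hyp}) = \cH\oplus\cK$, which is the lemma. The main technical obstacle lies in the third paragraph, where the Jordan-decomposition bookkeeping must be carried out carefully inside $\cH\oplus\cK$ and then matched against the hyperbolicity of $\fh$; everything else reduces routinely to the abelianness of $\cA$, the Anosov decomposition, and the equivariance of the Jordan decomposition under centralizers.
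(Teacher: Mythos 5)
Your proof is correct, and it follows the paper's argument for the first half but handles the key reverse inclusion by a different mechanism. Both arguments identify $\cH\oplus\cK$ with $\ker{\rm ad}(\fh_0^{hyp})=\cZ(\fh_0^{hyp})$ (using that $\cH$ is abelian and centralizes $\cK$, so the eigenvalues of ${\rm ad}(\fh_0)$ are either $0$ or have nonzero real part), and both obtain $\fa\oplus\cK_0=\cZ(\fa)\subset\cZ(\fh_0^{hyp})=\cH\oplus\cK$ from $\fh_0^{hyp}\in\cH_{hyp}\subset\fa$. For the opposite inclusion the paper argues that ${\rm ad}(\fa)$ annihilates $\cH\oplus\cK$: it vanishes on $\cH$ for free, and on the invariant factor $\cK$ it is simultaneously $\RR$-split (elements of $\fa$ are hyperbolic) and a derivation of a compact Lie algebra (hence with purely imaginary spectrum), so it is zero; thus $\cH\oplus\cK\subset\cZ(\fa)$. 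You instead prove the sharper intermediate identity $\fa=\cH_{hyp}$ by decomposing $\fh=h+k\in\cH\oplus\cK$ and invoking uniqueness of the Jordan decomposition, and then conclude via $\cZ(\cH_{hyp})\supset\cH\oplus\cK$. Both routes ultimately rest on the ellipticity of $\cK$, but yours makes explicit the equality $\fa=\cH_{hyp}$, which the paper only records afterwards (cf.\ Remark~\ref{rk:CSAhyperbolic}), at the cost of more Jordan-decomposition bookkeeping. One small polish: justify $\cH\subset\cZ(\cH_{hyp})$ directly from the abelianness of $\cH$ (anything commuting with $h$ commutes with $h^{hyp}$), rather than from ``both sit inside the abelian $\cA$'' --- the inclusion $\cH_{hyp}\subset\cA$ is true but itself requires the observation that the CSA $\cA$ is self-centralizing.
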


\begin{proof}

Let $\fh_0$ be an Anosov element of $\cH$ and let $\fh_0^{hyp}$ be the hyperbolic part of $\fh_0$.
The splitting $\cG = \cH \oplus \cK \oplus \cS \oplus \cU$ is ${\rm ad}(\fh_0^{hyp})$-invariant, where $\cS$ is ${\rm ad}(\fh_0^{hyp})$-contracted,
$\cU$ is ${\rm ad}(\fh_0^{hyp})$-expanded, and $\cH \oplus \cK$ is the space of generalized $0$-eigenvectors of ${\rm ad}(\fh_0^{hyp})$.
Since ${\rm ad}(\fh_0^{hyp})$ is hyperbolic,  $\cH \oplus \cK$ is actually the kernel of ${\rm ad}(\fh_0^{hyp})$.

On the other hand, $\fh_0^{hyp}$ lies in $\cH_{hyp}$, hence in $\fa$. Therefore, the centralizer of $\fa$ is contained in the centralizer of $\fh_0^{hyp}$:
\begin{equation}
\label{eq:<}
\fa \oplus \cK_0 \subset \cH \oplus \cK
\end{equation}

In particular, the adjoint action of $\fa$ preserves the decomposition $\cH \oplus \cK$.
This action is trivial on the factor $\cH$ (since $\cH$ is abelian and commutes with every element of $\cK$); and on the factor $\cK$, it is $\RR$-split.
However, $\cK$ is the Lie algebra of a compact Lie group, hence the eigenvalues of any automorphism
on it is purely imaginary. It follows that the action of $\fa$ is trivial on $\cH \oplus \cK$.
Hence, $\cH \oplus \cK$ is contained in the centralizer ${\mathcal Z}(\fa)=\fa \oplus \cK_0$ of $\fa$.
The lemma follows.
\end{proof}

The algebra $\cH \oplus \cK = \fa \oplus \cK_0 = \cZ(\fa)$ admits a (unique) Levi decomposition $(\cH \oplus \cT_1) \oplus \cK^\ast = (\fa \oplus \cT_e) \oplus \cK^\ast_0$. Elements in $\cK$ are elliptic, hence have no component in $\fa$:
$\cK$ is a direct sum $\cK^\ast \oplus \cK_e$ where $\cK_e$ is a subspace of $\cT_e$.
Since $\cH$ is contained in the center of $\cZ(\fa)$, it is contained in $\fa \oplus \cT_e$, and it is supplementary to $\cK_e$ therein.
Consider $\cH' = \fa \oplus (\cH \cap \cT_e)$: it contains $\fa$, and clearly the modified Weyl chamber action
$(G, K=K^\ast.K_e, \Gamma, \cH')$ is equivalent to $(G, K, \Gamma, \cH)$
since $\cH' \oplus \cK = \cH \oplus \cK$. Theorem~\ref{th:ss} is proved.


\begin{remark}
\label{rk:CSAhyperbolic}
In particular, $\cH$ contains the Cartan subspace $\fa$. Therefore, the CSA mentioned in Proposition~\ref{pro:CSA} is hyperbolic
in the sense of \S\ref{sub:ss}.
\end{remark}

\subsection{Mixed case}
\label{sub:mixedcase}
Now we study an algebraic Anosov action $(G, K, \Gamma, \cH)$ such that $G$ has a non trivial Levi factor $L$, and a non-trivial
(solvable) radical $R$.

\begin{theorem}
\label{th:mixed}
Let $(G, K, \Gamma, \cH)$ be an algebraic Anosov action, where $G$ is not solvable and not semisimple.
Then:

-- either $(G, K, \Gamma, \cH)$ is commensurable to an algebraic Anosov action on a solvable Lie group (it happens when $L$ is compact),

-- either $(G, K, \Gamma, \cH)$ is commensurable to a central extension over a (modified) Weyl chamber action (it happens when $G$ is reductive),

-- or $(G, K, \Gamma, \cH)$ is commensurable to a nil-suspension over an algebraic Anosov action which is commensurable to
a reductive algebraic Anosov action, ie. a central extension over a (modified) Weyl chamber action.
\end{theorem}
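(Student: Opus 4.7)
The plan is to perform a case analysis on the Levi decomposition $\cG = \cR \rtimes \cL$, building on the preliminary reductions of \S\ref{sub:prelim}: we may assume $K$ is connected, $\cK$ centralizes $\cH$ (Lemma \ref{le:commute}), and $\cK \cap \cN = \{0\}$ (Lemma \ref{le:KpasdansN}). The three cases of the theorem correspond respectively to $L$ compact; $L$ non-compact with $\cR$ abelian (i.e. $G$ reductive); and $L$ non-compact with $\cR$ non-abelian.

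In the reductive case, Lemma \ref{le:sanscentre} reduces $(G, K, \Gamma, \cH)$ to a central extension over an algebraic Anosov action $(G', K', \Gamma', \cH')$ with trivial center; a reductive Lie group with trivial center is semisimple, so Theorem \ref{th:ss} applies to the base and yields commensurability with a modified Weyl chamber action. In the compact-Levi case, since $\cL$ is semisimple the graph-modification argument of Lemma \ref{le:commute} (with $\cL$ playing the role of $\cK^\ast$, carried out in the universal cover $\tilde G$ where $\tilde L$ remains compact because $\pi_1(L)$ is finite) allows us to realise $\cH$ inside $Z_\cG(\cL) = Z_\cR(\cL) \subset \cR$; since the image of $\Gamma$ in $G/R \approx L$ is discrete in a compact group, hence finite, item $(1)$ lets us assume $\Gamma \subset R$, and item $(3)$ applied to $G' = \tilde R$ reduces to an algebraic Anosov action on the solvable group $\tilde R$.

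For the general case, let $N$ be the nilradical and $\bar G := G/N$; then $\bar G$ is reductive, and by Theorem \ref{th:latticenil} the projection $\bar\Gamma := \Gamma N/N$ is a uniform lattice. This exhibits $(G, K, \Gamma, \cH)$ as a nil-suspension over $(\bar G, \bar K, \bar\Gamma, \bar\cH)$ in the sense of Definition \ref{def:nilsus}. The ${\rm ad}(\fh_0)$-invariance of the Anosov decomposition, together with Lemma \ref{le:KpasdansN}, yields
$$\cN = (\cN \cap \cU) \oplus (\cN \cap \cS) \oplus (\cN \cap \cH),$$
so the decomposition descends to $\bar\cG = \bar\cU \oplus \bar\cS \oplus \bar\cK \oplus \bar\cH$ with hyperbolicity preserved on $\bar\cU, \bar\cS$. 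The pivotal step is to verify $\bar\cU \oplus \bar\cS \neq \{0\}$: were $\cU \oplus \cS \subset \cN$, then ${\rm ad}(\fh_0)$ would act on $\cL$ with only purely imaginary eigenvalues, forcing $\cL \subset \cK + \cH$; but $\cK + \cH$ is a direct product of the compact semisimple $\cK^\ast$ with a solvable Lie algebra, and any semisimple subalgebra of such a product lies in $\cK^\ast$ (since it projects trivially to the solvable factor), hence is compact, contradicting the Case 3 hypothesis that $\cL$ is non-compact. Thus the base is a reductive algebraic Anosov action, and applying the second case to it finishes the proof.

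The main obstacle is the Case 3 verification that $\bar\cU \oplus \bar\cS \neq 0$, which rests on the structural incompatibility of a non-compact semisimple $\cL$ with inclusion in $\cK + \cH$. Case 1 is also technically delicate: the modification of $\cH$ to centralize $\cL$ does not fit directly into the admissible commensurability operations $(1)$--$(6)$, and must be realised by passing first to the universal cover and working with generalized algebraic actions in the sense of Definition \ref{def:algebraicaction2}.
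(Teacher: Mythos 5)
Your overall architecture matches the paper's (a trichotomy according to the Levi factor, with the general case reduced to a nil-suspension over the reductive quotient $G/N$, and a verification that hyperbolicity survives in the quotient), and your ``pivotal step'' in Case 3 is essentially the paper's argument. But there are genuine gaps at two places, one of which is exactly the difficulty the paper spends most of its proof overcoming.

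The serious gap is in your reductive case. Writing ``Lemma~\ref{le:sanscentre} reduces to a central extension over a trivial-center base, then apply Theorem~\ref{th:ss} to the base'' is precisely the naive composition that the paper explicitly flags as insufficient: Theorem~\ref{th:ss} only produces an action \emph{commensurable} to a modified Weyl chamber action, and a central extension over an action commensurable to $X$ is not obviously commensurable to a central extension over $X$ --- this is the whole point of Remark~\ref{rk:pastop} and of the example in \S~\ref{sub:nilcentral}, and it is also why iterating central extensions is delicate (Remark~\ref{rk:l+l}). The paper instead proves Proposition~\ref{pro:reductive} directly: it reruns the semisimple argument on $G/R$ \emph{without} invoking Lemma~\ref{le:centrefini} there, and the real work is the Claim that $\Gamma$ itself contains a finite-index subgroup of the center of $G$, proved factor by factor using property (T) (finiteness of $\bar\Gamma_i/[\bar\Gamma_i,\bar\Gamma_i]$ for higher-rank factors), the finiteness of the center for $\op{SU}(1,n)$ and $\op{SO}(1,n)$, and Lemma~\ref{le:latticesl2} for $\widetilde{\op{SL}}(2,\RR)$ factors. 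None of this appears in your proposal; to make your shortcut work you would at least have to check that every commensurability operation performed on the trivial-center base in the proof of Theorem~\ref{th:ss} lifts to the central extension, which is exactly the issue at stake and is not addressed.

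Two further points. In your compact-Levi case, the graph modification of $\cH$ into the centralizer of $\cL$ is not one of the licensed operations $(1)$--$(6)$ (item $(4)$ only allows graphs into $\cK$), and it presupposes that $\cH$ normalizes the Levi factor, which is not established; you flag this but do not repair it. The repair is in fact to drop the modification entirely: since $\cL$ is compact, ${\rm ad}(\fh_0)$ has purely imaginary spectrum on $\cG/\cR$, so $\cS\oplus\cU\subset\cR$ by Lemma~\ref{le:spectre}, and then $\cG/\cR=\bar\cH\oplus\bar\cK$ with $\bar\cH$ a nilpotent ideal of a semisimple algebra, forcing $\cH\subset\cR$ with no modification at all --- this is the paper's route (phrased there as the case where an Anosov element lies in $\cR$). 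Finally, in Case 3 the citation of Theorem~\ref{th:latticenil} is wrong: to know that $\Gamma\cap N$ is a lattice in $N$ and that $\bar\Gamma$ is a lattice in $G/N$ you need Theorem~\ref{thm:latticeR}, whose hypothesis (no compact simple factor of $L$ acting trivially on $\cR$) must first be verified --- this is Proposition~\ref{pro:pascompact}, which your proposal omits.
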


\begin{remark}
\label{rk:pastop}
The last part of the statement of Theorem~\ref{th:mixed} (the case of non-compact Levi factor) may appear unnecessarily complicated;
one could think that it could be replaced by the simplified statement:

    \textsf{$(G, K, \Gamma, \cH)$ is commensurable to a nil-suspension over a central extension over a (modified) Weyl chamber action.}

Actually it is not true. The point is that, as shown by the example in \S~\ref{sub:nilcentral} if $(G_1, K_1, \Gamma_1, \cH_1)$, $(G_2, K_2, \Gamma_2, \cH_2)$ are two commensurable algebraic Anosov actions, it could be possible to construct nil-supensions (even abelian hyperbolic)
over $(G_1, K_1, \Gamma_1, \cH_1)$ that are not commensurable to any nil-suspension over $(G_2, K_2, \Gamma_2, \cH_2)$.
Remark~\ref{sub:simplecompact} is another evidence that we have to take care in the formulation.
\end{remark}

Under the hypothesis of this Theorem, $R$ and $L$ are non-trivial.
Recall that $\pi: \tilde{G} \to G$, $\tilde{R}$ denote the universal coverings. We identify $\tilde{G}/\tilde{R}$ with a Levi factor,
so that $\tilde{G}$ is a semidirect product $\tilde{L} \ltimes \tilde{R}$.

\begin{proposition}
\label{pro:pascompact}
For any simple compact factor $C$ of $\tilde{L}$, the adjoint representation ${\rm Ad}: C \to {\rm GL}(\cR)$ is not trivial.
\end{proposition}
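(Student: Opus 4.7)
The plan is to argue by contradiction: assume that the simple compact factor $C\subset\tilde L$ centralises the radical, i.e.\ $[\cC,\cR]=0$, where $\cC$ denotes the Lie algebra of $C$. The first step is purely algebraic: since $C$ is a simple factor of the Levi subalgebra $\cL$, $\cC$ already commutes with the sum $\cL'$ of the other simple factors of $\cL$, and combining this with the hypothesis one gets $[\cG,\cC]=[\cL,\cC]+[\cR,\cC]=\cC$. Thus $\cC$ is an ideal of $\cG$, and in fact a direct factor: $\cG=\cC\oplus\cG''$ as Lie algebras, with $\cG''=\cL'\ltimes\cR$.

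Next I would use compactness of $C$ to confine $\cC$ inside $\cH\oplus\cK$, and then simplicity of $\cC$ to push it further into $\cK$. Decompose the Anosov element $\fh_0\in\cH$ as $\fh_0=\fh_0^{\cC}+\fh_0''$ along this product. Since $[\cC,\cG'']=0$, the summand $\fh_0''$ acts trivially on $\cC$, while ${\rm ad}(\fh_0^{\cC})|_\cC$ is an inner derivation of the compact semisimple algebra $\cC$ and therefore has purely imaginary spectrum. Hence ${\rm ad}(\fh_0)|_\cC$ has no eigenvalue with non-zero real part, and Proposition~\ref{pro:alganosov} (together with the uniqueness of the spectral decomposition) forces $\cC\subset\cH\oplus\cK$. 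By Lemma~\ref{le:commute} we may moreover assume $[\cH,\cK]=0$, so the projection $\pi_\cH:\cH\oplus\cK\to\cH$ is a Lie algebra morphism. Its restriction to the simple algebra $\cC$ has kernel either $0$ or $\cC$; the first case would embed $\cC$ into the nilpotent algebra $\cH$, which is impossible for a non-trivial semisimple subalgebra. Therefore $\cC\subset\cK$.

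At this point $\cC$ is a non-trivial ideal of $\cG$ contained in $\cK$, contradicting Remark~\ref{rk:noidealK} (up to commensurability, $\cK$ contains no non-trivial ideal of $\cG$). The only delicate point I can foresee is verifying that the reductions collected in \S\ref{sub:prelim}, notably connectedness of $K$, the centraliser property $[\cH,\cK]=0$, and the absence of ideals of $\cG$ inside $\cK$, remain available in the universal-cover setting where the statement lives; but since these are purely Lie-algebraic statements about $(\cG,\cK,\cH)$, they transfer without change to $(\tilde G,\tilde K,\tilde\Gamma,\cH)$, so no genuine obstacle arises.
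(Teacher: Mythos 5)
Your proof is correct and follows essentially the same route as the paper's: show $\cC$ is an ideal, use compactness to see that ${\rm ad}(\fh_0)$ restricted to $\cC$ is elliptic and hence $\cC\subset\cH\oplus\cK$, use simplicity of $\cC$ versus nilpotency of $\cH$ to push $\cC$ into $\cK$, and conclude by Remark~\ref{rk:noidealK}. The only cosmetic difference is that you justify ellipticity via the direct-factor decomposition $\cG=\cC\oplus\cG''$, whereas the paper invokes Lemma~\ref{le:spectre} directly; both are fine.
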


\begin{proof}
Assume not. Then, since $C$ commutes with the other simple factors of $\tilde{L}$, its Lie algebra $\cC$
is an ideal of $\cG$. In particular, it is preserved by the adjoint action ${\rm ad}(\fh_0)$ of any Anosov element.
Let $\cC = \bar{\cH} \oplus \bar{\cK} \oplus \bar{\cS} \oplus \bar{\cU}$ be the restriction to $\cC$ of the spectral decomposition
of ${\rm ad}(\fh_0)$ (cf. Lemma~\ref{le:spectre}). Since $C$ is compact, this restriction is elliptic.
Hence elements of $\cC$ have no components in $\cS \oplus \cU$, hence $\cC$ is contained in $\cH \oplus \cK$. Let $q$ be the restriction to
$\cC$ of the projection from $\cK \oplus \cH$ onto $(\cK \oplus \cH)/\cK \approx \cH$: since $C$ is simple, and  not abelian,
this restriction is trivial: $\cC$ is actually contained in $\cK$. This contradicts Remark~\ref{rk:noidealK}.
\end{proof}

Therefore, one can use Theorem~\ref{thm:latticeR} in the appendix: $\Gamma \cap R$ and ${\Gamma} \cap N$ are lattices in respectively
$R$, $N$. Moreover, the projections of ${\Gamma}$ in $G/R$ and the reductive group $G/N$ are uniform lattices.
From now, we distinguish two cases:

\subsubsection{Some Anosov element $\fh_0$ lies in the radical $\cR$:} The restriction of ${\rm ad}(\fh_0)$ to $\cS$
is an automorphism, hence every element of $\cS$ has the form ${\rm ad}(\fh_0)(s) = [\fh_0, s]$ where $s$ lies in $\cS$.
Since $\cR$ is an ideal, it follows that $\cS$ is contained in $\cR$.

Similarly, $\cU$ is contained in $\cR$. Therefore, $\cG/\cR$ is the projection
of $\cH \oplus \cK$. Since $\cG/\cR$ is semisimple, it follows that $\cG/\cR$ is compact and that $\cH$ is entirely contained in $\cR$.
Since the projection of ${\Gamma}$ in $G/R$
is discrete, it is actually finite: up to a commensurability (item $(1)$ of the definition) we have $\Gamma \subset R$.

It follows from item $(3)$ of the definition of commensurability that $(G, K, \Gamma, \cH)$
is commensurable to $(R, K \cap R, \Gamma, \cH)$: we are reduced to the solvable case.
Theorem~\ref{th:mixed} in this case follows from Theorem~\ref{th:solvable}.

\subsubsection{No Anosov element of $\cH$ lies in $\cR$:}
Up to a nil-suspension with principal fibers $(\Gamma \cap N)\backslash N$, we can assume that
the niradical of $G$ is trivial, ie. that $G$ is reductive\footnote{We use this observation only now, because it was unnecessary
in the previous case, in which we can avoid the loss of precision in the Theorem as explained in Remark~\ref{rk:pastop}. Of course, this observation
is also unnecessary if we already know that $G$ is reductive!}.

We have an exact sequence:
\begin{equation}
\label{eq:reduc}
1 \to R \to G \to G/R \to 1
\end{equation}
This exact sequence is central (cf. \cite[Chapter 2, Theorem~5.1]{onishchik}).
Hence $\cR \cap \cK$ is an ideal in $\cG$. According to Remark~\ref{rk:noidealK}, we can assume, up to commensurability, that this intersection
is trivial. Let $\fh_0$ be an Anosov element of $\cH$. Since $\cR$ is the direct of its intersections with $\cS$, $\cU$ and $\cH$ (its intersection between $\cK$ being trivial),
and since $\cR$ lies in the center of $\cG$, we obtain that $\cR$ is contained in $\cH$.

Therefore, the action by translation by $R$ defines a central extension over an algebraic Anosov action $(G', K', \Gamma', \cH')$
where $G'$ is now semisimple. Since we have already solved the semisimple case, it seems at first glance that we have proved
Theorem~\ref{th:mixed}, but it is not true. Indeed, in this way, we have replaced three intermediate Anosov actions under study by commensurable
ones: at the initial algebraic Anosov action (in order to use the "Preliminary results" of \S~\ref{sub:commen} and \ref{sub:prelim}); at
the induced algebraic action on the reductive group $G/N$ (when we use Remark~\ref{rk:noidealK} in order to claim that $\cR \cap \cK$ is trivial);
and finally to the semisimple factor $G/R$ when we use Theorem~\ref{th:ss}.

In order to really achieve the proof of Theorem~\ref{th:mixed} we need to perform the replacement by a commensurable action
only to $G/N$, ie. to prove the following particular case:

\begin{proposition}
\label{pro:reductive}
Any algebraic Anosov action $(G, K, \Gamma, \cH)$ where $G$ is reductive is commensurable to a central extension over
a modified Weyl chamber action.
\end{proposition}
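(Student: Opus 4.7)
\medskip
\noindent\textbf{Proof plan.}
The plan is to bypass the commensurability-lifting issue highlighted in Remark~\ref{rk:pastop} by producing, via a single application of item~(4) from Definition~\ref{def:equiv}, an action equivalent to $(G,K,\Gamma,\cH)$ that is \emph{manifestly} a central extension over a modified Weyl chamber action. Concretely, I will run the Cartan-subspace construction of Theorem~\ref{th:ss} directly at the level of the reductive Lie algebra $\cG$, rather than at the level of its semisimple quotient $\cG/\cR$; this avoids having to lift a chain of commensurability operations from the base to the total space.

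First I set up the central extension structure. By Lemma~\ref{le:commute} and Remark~\ref{rk:noidealK}, we may assume that $\cK$ centralizes $\cH$ and contains no non-trivial ideal of $\cG$. Since $G$ is reductive, its connected center $R$ has Lie algebra $\cR$ lying in the center of $\cG$; in particular $\cR$ is killed by $\op{ad}(\fh_0)$ and hence sits inside $\cH\oplus\cK$. As any subalgebra of $\cR$ is a central (hence normal) ideal of $\cG$, the intersection $\cR\cap\cK$ must vanish, whence $\cR\subset\cH$. Theorem~\ref{thm:latticeR} of the appendix then gives that $\bar\Gamma:=p(\Gamma)$ is a uniform lattice in the semisimple quotient $\bar G:=G/R$, so $(G,K,\Gamma,\cH)$ is already a central extension in the sense of Definition~\ref{def:normal} over $(\bar G,\bar K,\bar\Gamma,\bar\cH)$.

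Now I perform the Cartan-subspace construction on $\cG$ itself. Decomposing the connected abelian group as $R=R_v\times R_e$ (vector group times torus), write $\cR=\cR_v\oplus\cR_e$. By Proposition~\ref{pro:CSA}, $\cH\oplus\fa_0\oplus\cT_1$ is a CSA of $\cG$, so $\cH$ is abelian and consists of semisimple elements, and its hyperbolic part contains $\cR_v$. Choose a Cartan subspace $\fa=\cR_v\oplus\fa^{ss}$ of $\cG$ containing the hyperbolic part of $\cH$, where $\fa^{ss}$ is a Cartan subspace of $\cG^{ss}$. The argument of Lemma~\ref{le:chcsa} carries over verbatim---its only ingredients being Jordan decomposition and the compactness of $\cK$---and yields $\cH\oplus\cK=\cZ(\fa)$.

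Write the Levi decomposition of $\cZ(\fa)=\cH\oplus\cK$ as $(\fa\oplus\cT_e)\oplus\cK^\ast_0$, with $\fa\oplus\cT_e$ the solvable radical and $\cT_e$ an abelian compact subalgebra containing $\cR_e$. As in the proof of Theorem~\ref{th:ss}, $\cK^\ast=\cK^\ast_0$, the radical $\cT_1$ of $\cK$ lies in $\cT_e$, and $\cH$ is supplementary to $\cT_1$ in $\fa\oplus\cT_e$; setting $\cH':=\fa\oplus(\cH\cap\cT_e)$ therefore produces a Lie subalgebra of $\cH\oplus\cK$ supplementary to $\cK$, i.e., the graph of a Lie morphism $\cH\to\cK$. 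By item~(4), $(G,K,\Gamma,\cH')$ is equivalent to $(G,K,\Gamma,\cH)$. Since $\cR\subset\cH'$, the new action remains a central extension over $(\bar G,\bar K,\bar\Gamma,p_\ast(\cH'))$, and $p_\ast(\cH')=\fa^{ss}\oplus(p_\ast(\cH)\cap p_\ast(\cT_e))$ is precisely the algebra of a modified Weyl chamber action on the semisimple Lie group $\bar G$. The main point to verify is that Proposition~\ref{pro:CSA} and Lemma~\ref{le:chcsa} transfer cleanly from the semisimple to the reductive setting, which amounts to tracking how the central factor $\cR$ splits into $\cR_v\subset\fa$ and $\cR_e\subset\cT_e$ in the normalizer and centralizer computations; I expect this to be routine, with no new conceptual ingredient required.
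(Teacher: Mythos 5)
Your proposal reproduces correctly the first half of the paper's argument: since $G$ is reductive the sequence $1\to R\to G\to G/R\to 1$ is central, so $\cR\cap\cK$ is an ideal which can be assumed trivial by Remark~\ref{rk:noidealK}, whence $\cR\subset\cH$ and the action is a central extension over $(\bar G,\bar K,\bar\Gamma,\bar\cH)$; and the Cartan-subspace computation of Lemma~\ref{le:chcsa} together with item~(4) of Definition~\ref{def:equiv} does transfer (whether you run it on $\cG$ or on $\cG/\cR$ is immaterial — the paper does the latter "word-by-word"). But you have misidentified where the difficulty of Remark~\ref{rk:pastop} actually bites, and your last sentence hides a genuine gap: for $(\bar G,\bar K,\bar\Gamma,p_\ast(\cH'))$ to \emph{be} a modified Weyl chamber action, the definitions of \S~\ref{sub:exweyl} and \S~\ref{sub:elliptic} require the semisimple group to have finite center, so that the group $K_0$ tangent to the compact part $\cK_0$ of $\cZ(\fa^{ss})$ is genuinely compact. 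Nothing in your argument rules out that $\bar G=G/R$ has infinite center (e.g.\ a factor locally isomorphic to $\widetilde{\op{SU}}(1,n)$ or $\widetilde{\op{SL}}(2,\RR)$, in which case the "torus" $\cT_e$ is tangent to a non-compact group). One must then divide by a finite-index subgroup of the center before obtaining a modified Weyl chamber action, and doing this downstairs on $\bar G$ is exactly the kind of commensurability operation on the base that, by the example of \S~\ref{sub:nilcentral}, cannot in general be lifted through the central extension.

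This is why the paper's proof does not stop where yours does: its substantial content is the Claim that $\Gamma$ itself contains a subgroup $Z_\Gamma$ of finite index in the center of $G$, so that the finite-center reduction can be performed upstairs on $G$ (item~(6) of commensurability) while preserving the central-extension structure. Proving that Claim is not routine: the paper passes to the universal cover $\tilde G=\tilde L\times\tilde R$, splits $\tilde L$ into factors $\tilde F_i$, and analyses the obstruction morphisms $\bar\alpha_i:\bar\Gamma_i/[\bar\Gamma_i,\bar\Gamma_i]\to\tilde R/\Lambda$ case by case — using property~(T) (finiteness of the abelianization) when $\tilde F_i$ is not locally isomorphic to $\op{SU}(1,n)$ or $\op{SO}(1,n)$, finiteness of the center in the remaining rank-one cases except $\widetilde{\op{SL}}(2,\RR)$, and Lemma~\ref{le:latticesl2} for that last case. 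None of this appears in your proposal, and your closing assertion that the remaining verifications are "routine, with no new conceptual ingredient required" is precisely where the proof is incomplete. (A secondary, more cosmetic issue: the "hyperbolic part" of \S~\ref{sub:ss} is defined through the adjoint representation, which kills $\cR$, so declaring that the hyperbolic part of $\cH$ "contains $\cR_v$" and that $\cR_v\oplus\fa^{ss}$ is a Cartan subspace of the reductive $\cG$ requires extending those definitions; this can be patched, but it is another symptom of treating the reductive case as if it were the finite-center semisimple case.)
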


Let $\bar{\fh}_0$ be the projection in $\cL = \cG/\cR$ of the Anosov element $\fh_0$.
The adjoint action of $\bar{\fh}_0$ on $\cL$ coincide with the action induced by ${\rm ad}(\fh_0)$, hence admits as spectral
decomposition $\cL = \bar{\cH} \oplus \bar{\cK} \oplus \bar{\cS} \oplus \bar{\cU}$ the projection of the spectral decomposition $\cG = \cH \oplus \cK \oplus \cS \oplus \cU$ of ${\rm ad}(\fh_0)$.
If $\bar{\cS}$ is trivial, then the same is true for $\bar{\cU}$, since the trace of ${\rm ad}(\fh_0)$ is $0$. Then $\cG$ reduces to $\bar{\cH} \oplus \bar{\cK}$, with $\bar{\cH} \neq \{ 0\}$ (since it contains $\bar{\fh}_0$). This is impossible since $\cL$ is semisimple.

Therefore, $\bar{\cS}$ and $\bar{\cU}$ are non-trivial, and $(G/R, \bar{K}, \bar{\Gamma}, \bar{\cH})$ is an algebraic Anosov action.
The key point is that in the proof of Theorem~\ref{th:ss} we have used Lemma~\ref{le:centrefini}: we have divided $G/R$ by the intersection
between $\bar{\Gamma}$ and the center of $G/R$.

However, we can reproduce word-by-word the proof in the semisimple
case. We obtain, after maybe replacing $\cH$ by another nilpotent subalgebra $\cH'$ like in item ($4$) of the definition of commensurability,
that $\bar{\cH} = \fa \oplus \cH_e$ and $\bar{\cK} = \cK_0^\ast \oplus \cK_e$, where $\fa$ is a Cartan subspace,
$\cK_0^\ast$ the semisimple part of $\cZ(\fa)$, and $\cH_e \oplus \cK_e$ a splitting of the radical of the compact part $\cK_0$ of $\cZ(\fa)$.
The little difference is that the Lie subgroup $\tilde{K}_0$ of $G/R$ tangent to $\cK_0$ might be non-compact in $G/R$, since $G/R$
might have infinite center as long as we do not use Lemma~\ref{le:centrefini}.

In order to conclude, we have to show that the following:

\begin{center}
\textit{Claim: $\Gamma$ contains a subgroup $Z_\Gamma$ that has finite index in the center $Z_G$ of $G$.}
\end{center}

Indeed, if we prove the claim, the Levi factor of $(G/Z_\Gamma)/(R/Z_\Gamma)$ has finite center.
Hence $(G, K, \Gamma, \cH)$ is commensurable to an algebraic Anosov action whose Levi factor has finite center,
and we conclude as above.

We prove the claim as follows: let us work in the universal covering,
hence consider the \textit{generalized} algebraic Anosov action $(\tilde{G}, \tilde{K}, \tilde{\Gamma}, \cH)$ (cf. Definition~\ref{def:algebraicaction2}).
Then $\tilde{G} = \tilde{L} \times \tilde{R}$. Pay attention to the fact that the semidirect product is a product. If the intersection
$\tilde{\Gamma} \cap \tilde{L}$ was a lattice in $\tilde{L}$, we would easily prove the claim by applying Lemma~\ref{le:centrefini}. The way
how this property fails is expressed by the projection of the morphism $\tilde{\Gamma} \to \tilde{R}$ defined by the projection on
the second factor. We will decompose this morphism in several pieces that we will study one-by-one.

First of all, observe that according to Proposition~\ref{pro:pascompact}, $\tilde{L}$ has no compact factors.
Let $\bar{\Gamma}$ be the uniform lattice in $\tilde{L}$, obtained by identifying the Levi factor with $\tilde{G}/\tilde{R}$
(hence it is not the intersection $\tilde{\Gamma} \cap \tilde{L}$).

Let $\tilde{L} = \tilde{F}_1 \times ... \times \tilde{F}_r$ the decomposition of $\tilde{L}$ in factors so that $\bar{\Gamma}$
is the product $\bar{\Gamma}_1 \times ... \times \bar{\Gamma}_r$ where each $\bar{\Gamma}_i$ is a irreducible uniform lattice in $\tilde{F}_i$.
It defines a similar splitting $\tilde{\Gamma} = \tilde{\Gamma}_1 \times ... \times \tilde{\Gamma}_r$ where each $\tilde{\Gamma_i}$
is an uniform lattice in $\tilde{F}_i \times \tilde{R}$.
For each $i$, consider the projection $\alpha_i: \tilde{\Gamma}_i \to \tilde{R}$ on the second factor. We have an exact central sequence:
$$1 \to \Lambda \to \tilde{\Gamma}_i \to \bar{\Gamma}_i \to 1$$
where $\Lambda$ is the lattice $\tilde{\Gamma} \cap \tilde{R}$ of $\tilde{R}$. Consider the induced morphism
from $\bar{\Gamma}_i$ into $\tilde{R}/\Lambda$.
Since $\tilde{R}/\Lambda$ is abelian, it induces a morphism:
$$\bar{\alpha}_i: \bar{\Gamma}_i/[\bar{\Gamma}_i, \bar{\Gamma}_i] \to \tilde{R}/\Lambda$$

There are several possibilities:

\begin{itemize}
  \item \textit{If $\tilde{F}_i$ is not locally isomorphic to $\op{SU}(1,n)$ or $\op{SO}(1,n)$:} then, according to \cite[Theorem 7.1, page 98]{vinberg2},
  $\bar{\Gamma}_i/[\bar{\Gamma}_i, \bar{\Gamma}_i]$ is finite (because then $\tilde{F}_i$ and its lattices have property (T)). By restricting to a finite index subgroup, we can assume that $\bar{\alpha}_i$
  is trivial. It means that $\tilde{\Gamma}_i$ preserves $\tilde{F}_i \times \Lambda$. Therefore, $\tilde{\Gamma}_i \cap \tilde{F}_i$  is an uniform
  lattice in $\tilde{F}_i$, hence contains a finite index subgroup $Z_i$ of the center of $\tilde{F}_i$ according to Lemma~\ref{le:centrefini}.
  \item \textit{If $\tilde{F}_i$ is locally isomorphic to $\op{SU}(1,n)$ with $n\geq2$, or to $\op{SO}(1,k)$ with $k \geq 3$:} Then, the center of $\tilde{F}_i$ is finite. Define $Z_i$ as the trivial group.
  \item \textit{If $\tilde{F}_i$ is locally isomorphic to $\op{SU}(1,1) \approx \op{SO}(1,2) \approx \op{PSL}(2)$:} Then either $\tilde{F}_i$ has finite center and we define
  $Z_i$ has the trivial group, or $\tilde{F}_i$ is the universal covering $\widetilde{\op{SL}}(2)$ of $\op{PSL}(2)$. In the second case, we define $Z_i$ as the intersection between $[\Gamma_i, \Gamma_i]$,
  and the center of $\widetilde{\op{SL}}(2)$. According to Lemma~\ref{le:latticesl2}, $Z_i$ has a finite index intersection with the center of $\widetilde{\op{SL}}(2)$. But, since $Z_i$ is contained in
  $[\Gamma_i, \Gamma_i]$, it is contained in the first commutator group of $\tilde{R} \times F_i$,
  ie., $F_i$.
\end{itemize}

Now take the product $Z_1 \times ... \times Z_r$ of every subgroup constructed in each $\tilde{F}_i$. It is a discrete subgroup of $\tilde{\Gamma}$
which has finite index in the center of $\tilde{L} = \tilde{F}_1 \times ... \times \tilde{F}_r$ (since this center is the product of the centers of the $\tilde{F}_i$'s). The Claim is proved, and the proof of
Theorem~\ref{th:mixed} is complete.\qed

\section{Conclusion}
\label{sec:conclusion}

\subsection{Algebraic Anosov flows}
Our study covers the special case already treated by P. Tomter; the case of algebraic Anosov flows, ie. the case
where the acting group $H$ is $\RR$. Actually, most of the difficulties appearing in the proofs of
Theorems \ref{th:solvable} and \ref{th:mixed} are greatly simplified in this case.
For example, assume that the Levi factor $L$ of $G$ is not trivial. Then, $L$ must be simple and of rank one, and the underlying
Weyl chamber flow cannot be a modified one. This Levi factor must obviously be of real rank one. Moreover,
according to the proof of Theorem~\ref{th:mixed}, $R/N$ is contained in the acting group $H=\RR$, and therefore must be trivial
since $H$ already contains a Cartan subspace of $G/R$.

In the case where $L$ is trivial, ie. when $G$ is solvable, according to Theorem~\ref{th:solvable}, $(G, K, \Gamma, \RR)$ is commensurable to a nil-suspension over the suspension of an Anosov action of $\ZZ^k$ on a torus . Obviously $k=1$, and the nil-suspension must be hyperbolic.

Hence we recover following result, with a proof somewhat simpler than the one in \cite{tomter1}, \cite{tomter2}:

\begin{theorem}
\label{thm:tomter}
Let $(G, K, \Gamma, \RR)$ be an algebraic Anosov flow. Then, $(G, K, \Gamma, \RR)$ is commensurable to either
the suspension of an Anosov automorphism of a nilmanifold, or to a hyperbolic nil-suspension over the geodesic flow
of a locally symmetric space of rank one.\qed
\end{theorem}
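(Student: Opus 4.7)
\medskip
\noindent\textbf{Proof plan for Theorem \ref{thm:tomter}.}
The strategy is to specialize the three classification theorems (Theorems~\ref{th:solvable}, \ref{th:ss}, \ref{th:mixed}) to the one-dimensional case $\cH = \RR$, and observe that dimension one forces drastic simplifications at every step. The case split is according to the structure of $G$: solvable, semisimple, or mixed.

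First, I would handle the solvable case. By Theorem~\ref{th:solvable}, $(G,K,\Gamma,\RR)$ is commensurable to a nil-suspension over the suspension of an Anosov $\ZZ^p$-action on $\TT^p$. Dimension count forces $p = 1$. Moreover the nil-suspension must be hyperbolic, for if $\cH \cap \cN \neq \{0\}$ with $\dim \cH = 1$, then $\cH \subset \cN$ is nilpotent-valued under ${\rm ad}$, ruling out the existence of an Anosov element. A hyperbolic nil-suspension over the suspension of an Anosov automorphism of $\TT^1$ is again (as iterated $\TT^\ell$-suspensions, cf.\ \S\ref{sub:niltor}) the suspension of an Anosov automorphism of some nilmanifold, producing the first alternative.

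Next, the semisimple case. Theorem~\ref{th:ss} gives commensurability with a modified Weyl chamber action $(G,K,\Gamma,\cH')$ with $\cH' = \fa \oplus (\cH \cap \cT_e)$ containing a Cartan subspace $\fa$. Since $\dim \cH' = 1$ and an Anosov element must have a non-trivial hyperbolic part (the elliptic summand in $\cT_e$ contributes only purely imaginary eigenvalues), we are forced to have $\dim \fa = 1$ and $\cH \cap \cT_e = 0$. Thus $G$ has real rank one and $\cH = \fa$, i.e.\ the action is the unmodified Weyl chamber flow, which is precisely the geodesic flow of the rank-one locally symmetric space $\Gamma\backslash G/K$.

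Finally, in the mixed case I invoke the dichotomy in the proof of Theorem~\ref{th:mixed}. If some Anosov element lies in $\cR$, then $\cS\oplus\cU \subset \cR$ and $\cG/\cR$ is compact semisimple; up to commensurability one lands in the solvable case already handled. Otherwise, after a nil-suspension one may assume $G$ reductive, and one has $\cR \subset \cH$. Since $\dim \cH = 1$, $\cR$ is either $0$ or equal to $\cH$; the latter would make $\bar{\fh}_0 = 0$ in $\cG/\cR$, contradicting the argument in the proof of Proposition~\ref{pro:reductive} that $\bar{\cS}, \bar{\cU}$ must be non-trivial. Hence $\cR = 0$: the central extension step is trivial, and we end up with a nil-suspension over a semisimple Anosov flow. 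By the semisimple case this must be the geodesic flow of a rank-one locally symmetric space, and the nil-suspension is hyperbolic by the same dimension argument as in the solvable case.

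The main potential obstacle is the mixed case, where one must verify carefully that the awkward central extension phenomenon highlighted in Remark~\ref{rk:pastop} (and exemplified in \S\ref{sub:nilcentral}) cannot occur here: this follows because $\cR \subset \cH$ together with $\dim\cH = 1$ collapses the central extension to triviality. Once this is checked, the three cases assemble into the two-alternative statement of the theorem. \qed
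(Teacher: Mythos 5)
Your proposal is correct and takes essentially the same route as the paper: the paper's proof likewise specializes Theorems~\ref{th:solvable}, \ref{th:ss} and \ref{th:mixed} to $\dim\cH=1$, using the same dimension counts to force $p=1$, hyperbolicity of the nil-suspension, rank one for the Levi factor, the impossibility of a modified Weyl chamber flow, and triviality of $R/N$ (the paper phrases the last point as: $\cR/\cN\subset\cH$ while $\cH$ already contains a Cartan subspace of $\cG/\cR$). The only blemish is the notational conflation of the rank $p$ of the acting group with the dimension of the torus (an ``Anosov automorphism of $\TT^1$'' does not exist; the base is the suspension of a single Anosov automorphism of a torus of some dimension $\geq 2$), which does not affect the argument.
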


\subsection{Algebraic Anosov actions of codimension one}
Recall that an Anosov action is said \textit{of codimension one} if there is an Anosov element of $\cH$ for which the unstable subalgebra $\cU$ has dimension one.
It is conjectured (cf. \cite{Bar-Maq}) that any algebraic Anosov action of $\RR^k$ of codimension one is
a $\TT^\ell$-extension over the suspension of an Anosov action of $\ZZ^k$ on a torus, or over the geodesic flow
of the unit tangent bundle of a closed hyperbolic surface. A first step is to check this
conjecture in the case of algebraic Anosov action.

\begin{proposition}
\label{pro:codimone}
Let $(G, K, \Gamma, \cH)$ be an algebraic Anosov action of codimension one.
Then, $(G, K, \Gamma, \cH)$ is commensurable to either
a nil-extension over a codimension one Anosov action of $\ZZ^k$ on a torus, or to a nil-extension over the geodesic flow of the unit tangent bundle of a closed hyperbolic surface.
\end{proposition}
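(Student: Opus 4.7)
The strategy is to invoke the three classification Theorems~\ref{th:solvable}, \ref{th:ss}, and~\ref{th:mixed}, and then specialise each conclusion under the codimension one assumption. The unifying mechanism is a short observation: \emph{any nil-suspension of a codimension one algebraic Anosov action is automatically a nil-extension.} Indeed, given the defining exact sequence $0 \to \cN \to \hat{\cG} \to \cG \to 0$ of the nil-suspension, the projection $p\colon \hat{\cG} \to \cG$ intertwines the adjoint action of an Anosov element $\hat{\mathfrak h}_0 \in \hat{\cH}$ with that of its image $\mathfrak h_0 \in \cH$. Consequently $p(\hat{\cU}) = \cU$ and $\dim\hat{\cU} = \dim\cU + \dim(\hat{\cU} \cap \cN)$; from $\dim\cU \geq 1$ and $\dim\hat{\cU} = 1$ we read $\dim\cU = 1$ and $\hat{\cU} \cap \cN = 0$, and symmetrically $\hat{\cS} \cap \cN = 0$. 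Hence $\cN \subset \hat{\cH}$ and the nil-suspension satisfies Definition~\ref{def:extension}.

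I would apply this first to the solvable case: Theorem~\ref{th:solvable} reduces $(G, K, \Gamma, \cH)$ to a nil-suspension over the suspension of an Anosov $\ZZ^p$-action on a torus, which the observation upgrades to a nil-extension; the underlying $\ZZ^p$-action on a fibre then inherits codimension one.

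For the semisimple case, Theorem~\ref{th:ss} reduces $(G, K, \Gamma, \cH)$ to a modified Weyl chamber action. A modification replaces $\cH$ by $\fa \oplus \cH_e$ with $\cH_e$ an elliptic summand in $\cK$, leaving the unstable subalgebra $\cU = \bigoplus_{\alpha \in \Sigma^+} \cG^\alpha$ (for a Weyl chamber containing a fixed Anosov element) unchanged. The equation $\dim\cU = 1$ then forces the restricted root system to contain a single positive root with one-dimensional root space. By inspection of the classification of real simple Lie groups of real rank one, and after discarding compact simple factors via Remark~\ref{rk:nocompact}, the only possibility is that $G$ is locally isomorphic to $\op{PSL}(2, \RR)$. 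The underlying Weyl chamber action is then the geodesic flow on the unit tangent bundle of the hyperbolic surface $\Gamma \backslash {\mathbb H}^2$, and the modification only adds the abelian $\op{SO}(2)$-factor, producing a nil-extension of this geodesic flow.

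For the mixed case (Theorem~\ref{th:mixed}) there are three sub-cases, each handled by the previous reductions. If $L$ is compact we fall back to the solvable case; if $G$ is reductive we have a central extension, hence a nil-extension, over a modified Weyl chamber action, and the semisimple analysis again produces $\op{PSL}(2, \RR)$. The remaining sub-case is a nil-suspension over an action commensurable to a reductive one; the outer nil-suspension becomes a nil-extension by the key observation, and composing with the reductive analysis yields a tower of nil-extensions over the geodesic flow. The main obstacle is precisely this last rearrangement: one must check that the composition of nil-extensions is again a nil-extension, and that the commensurability step interleaved between the two extensions in Theorem~\ref{th:mixed} can be absorbed, so that the global structure collapses into a single nil-extension over the geodesic flow (or, in the solvable branch, over the suspension of a codimension one $\ZZ^k$-action on a torus). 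The remainder of the argument is routine bookkeeping.
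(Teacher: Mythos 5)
Your overall strategy coincides with the paper's: run through Theorems~\ref{th:solvable}, \ref{th:ss} and \ref{th:mixed}, show that the codimension-one hypothesis forces the relevant nil-suspensions to be nil-extensions, and identify $\op{PSL}(2,\RR)$ from the fact that there are only two roots, each with one-dimensional root space. But the justification of your ``unifying mechanism'' has a genuine gap. From $\dim\hat{\cU}=1$, $\dim\hat{\cU}=\dim\cU+\dim(\hat{\cU}\cap\cN)$ and $\dim\cU\geq 1$ you correctly get $\hat{\cU}\cap\cN=0$. The assertion that ``symmetrically $\hat{\cS}\cap\cN=0$'' does not follow: there is no symmetry here, because the codimension-one hypothesis bounds only the unstable dimension and gives no a priori upper bound on $\dim\hat{\cS}$, so the same computation applied to the stable side proves nothing. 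Since $\cN$ splits as $(\cN\cap\hat{\cH})\oplus(\cN\cap\hat{\cS})\oplus(\cN\cap\hat{\cU})$ under the spectral decomposition of an Anosov element, what is missing is precisely the exclusion of a contracting part inside the fibre, and this is the crux of the whole proposition. The correct argument is a unimodularity/compensation argument: the holonomy of the flat bundle preserves the lattice $\Lambda=\hat{\Gamma}\cap N$, hence the Haar measure of $N$, so the sum of the real parts of the eigenvalues of ${\rm ad}(\hat{\fh}_0)$ on $\cN$ vanishes (for a suitable Anosov element in the lattice); the summand $\cN\cap\hat{\cH}$ contributes zero because ${\rm ad}(\hat{\fh}_0)$ is nilpotent on $\hat{\cH}$, so $\cN\cap\hat{\cS}\neq 0$ would force $\cN\cap\hat{\cU}\neq 0$ and hence $\dim\hat{\cU}\geq 2$. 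This is what the paper's one-line ``if not it would increase the dimension of $\cU$'' encodes; without it your key observation is unproved.

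A secondary divergence: in the mixed case the paper does not build the tower of nil-extensions with a commensurability interleaved that you defer to ``routine bookkeeping'' --- and which is genuinely delicate, since iterating extensions can leave the nilpotent category and since Remark~\ref{rk:pastop} and \S~\ref{sub:nilcentral} warn explicitly that commensurable actions need not carry commensurable suspensions. Instead, once the nil-suspension over the reductive quotient $G/N$ is known to be a nil-extension, one gets $\cR\subset\cH$, so $\cR$ is nilpotent, $R=N$, and the base is already a modified Weyl chamber action on a semisimple group: a single nil-extension suffices. You should adopt that shortcut rather than the rearrangement you sketch.
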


\begin{proof}
Let us first consider the case where $G$ is solvable: according to Theorem \ref{th:solvable},
the action is commensurable to a nil-suspension over an Anosov action of $\ZZ^p$ on a torus.
Clearly, this Anosov action of $\ZZ^p$ must be of codimension one, and the nil-suspension has to
be a nil-extension since if not it would increase the dimension of $\cU$.

When $G$ is not solvable, $(G, K, \Gamma, \cH)$ is commensurable to a nil-suspension
over an algebraic Anosov action $(G', K', \Gamma', \cH')$ where $G'$ is the reductive quotient $G/N$
of $G$. As in the solvable case, this nil-suspension must be a nil-extension. It follows that
the (solvable) radical $R$ of $G$ must be contained in $H$, hence be nilpotent: we have $R=N$,
hence $G'$ is semisimple. Therefore, $(G', K', \Gamma', \cH')$ is a modified Weyl chamber action
of codimension one.

Now we observe that for modified Weyl chamber actions, $\cU$ and $\cS$ have the same dimension for every Anosov element (ie. every element of the Weyl chamber). Hence in our case they are
both one dimensional, meaning that $G'$ has only two roots. The classification of
semisimple Lie algebra implies that $G'$ must be isogenic to $\op{PSL}(2,\RR)$. The proposition follows.
\end{proof}

\subsection{A remark on equivalence between algebraic Anosov actions and hyperbolic Cartan subalgebras}
\label{sub:csach}
According to Proposition~\ref{pro:CSA}, we have observed that algebraic Anosov actions are closely related
to Cartan subalgebras. More precisely:

-- when $G$ is solvable, we can assume up to commensurability that $K$ is trivial. Then it follows immediately
from Proposition \ref{pro:CSA} that $\cH$ is a CSA of $\cG$.

-- when $G$ is non-solvable, we have proved that the projection of $\cH$ in the Levi factor $G/R$ must
contain a Cartan subspace $\fa$ (cf. Lemma~\ref{le:chcsa}). Therefore,
the CSA $\cH \oplus \fa_0 \oplus \cT_1$ as stated in Proposition~\ref{pro:CSA} is a hyperbolic CSA of $G$ (cf. \S~\ref{sub:CSA}).

Hence, according to Theorems \ref{thm:csasolvable} and \ref{thm:csageneral}, the CSA $\cH \oplus \fa_0 \oplus \cT_1$
does not depend (up to conjugacy in $G$) on the Anosov action.








\section{Appendix}
\label{sec:appendice}

\subsection{Spectral decomposition of linear endomorphisms}
Let $V$ be a finite dimensional real vector space.
Let $f: V \to V$ be a linear endomorphism, and $V = E_1 \oplus ... \oplus E_k$ its spectral
decomposition in generalized eigenspaces: the restriction of the complexification $f^\CC$ to $E_i^\CC$
has only one complex eigenvalue $\lambda_i$.

Let $W \subset V$ be a $f$-invariant linear subspace, let
$p: V \to V/W$ be the quotient map and let $\bar{f}: V/W \to V/W$ be the induced linear map.

\begin{definition}
\label{def:hyperb}
The linear endomorphism $f: V \to V$ is hyperbolic if no complex eigenvalue of $f$
has vanishing real part. It is partially hyperbolic if the endomorphism induced on $V/{\rm Ker}f$ is hyperbolic.
\end{definition}

\begin{lemma}
\label{le:spectre}
The spectral decomposition of the restriction $f_{\mid W}$ is $W = (E_1 \cap W) \oplus ... \oplus (E_k \cap W)$,
and the spectral decomposition of $\bar{f}$ is $V/W = p(E_1) \oplus ... \oplus p(E_k)$.
\end{lemma}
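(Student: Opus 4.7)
The plan is to reduce both statements to the standard fact that the spectral projections are polynomials in $f$. Write $\mu(t)=\prod_{i=1}^k (t-\lambda_i)^{m_i}$ for the minimal polynomial of $f^{\CC}$ (grouping complex-conjugate eigenvalues together to stay in $\RR[t]$ when needed, so that each factor comes from a real factor of $\mu$). The polynomials $\mu_i(t):=\mu(t)/(t-\lambda_i)^{m_i}$ are globally coprime; by B\'ezout there exist polynomials $a_i(t)$ with $\sum_i a_i(t)\mu_i(t)=1$. The operator $\pi_i:=a_i(f)\mu_i(f)$ is then the spectral projection onto $E_i$: one checks on each $E_j$ that $\pi_i$ acts as $\delta_{ij}\Id$, using $(f-\lambda_j)^{m_j}|_{E_j}=0$.

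For the first statement I would then argue as follows. Since $W$ is $f$-invariant, it is invariant under every polynomial in $f$, and in particular under every $\pi_i$. Given $v\in W$, decompose $v=v_1+\cdots+v_k$ with $v_i\in E_i$; applying $\pi_i$ yields $v_i=\pi_i(v)\in W$, so $v_i\in W\cap E_i$. This shows $W=\bigoplus_{i=1}^k (W\cap E_i)$. The restriction $f|_{W\cap E_i}$ still satisfies $(f-\lambda_i)^{m_i}=0$, so the only eigenvalue of its complexification is $\lambda_i$, which is exactly what is needed for this to be the spectral decomposition of $f|_W$.

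For the second statement, I would first note that $V/W=p(E_1)+\cdots+p(E_k)$ is immediate from $V=\bigoplus_i E_i$. To show the sum is direct, suppose $p(e_1)+\cdots+p(e_k)=0$ with $e_i\in E_i$; then $e_1+\cdots+e_k\in W$, and by the first part of the lemma (applied to $W$) each $e_i$ lies in $W$, whence $p(e_i)=0$. Finally, $\bar f$ acts on $p(E_i)$ through the passage to the quotient, so $(\bar f-\lambda_i)^{m_i}$ vanishes on $p(E_i)$; hence $p(E_i)$ is contained in the generalized $\lambda_i$-eigenspace of $\bar f$. Since these subspaces are in direct sum and together fill $V/W$, each $p(E_i)$ must be exactly the generalized $\lambda_i$-eigenspace of $\bar f$, concluding the proof.

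There is no serious obstacle here: the only point requiring minor care is staying within $\RR$ if one insists on real polynomials, which is handled by grouping conjugate eigenvalues in $\mu$; otherwise the argument is purely formal manipulation of the polynomial calculus of a single endomorphism.
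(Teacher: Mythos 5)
Your proof is correct and complete. The paper states Lemma~\ref{le:spectre} in its appendix as a classical fact and gives no proof at all, so there is nothing to compare against; your argument via the spectral projections $\pi_i=a_i(f)\mu_i(f)$ realized as polynomials in $f$ is the standard one, and both halves (invariance of $W$ under the $\pi_i$ for the first claim, and directness of $\sum_i p(E_i)$ via the first claim plus the inclusion of each $p(E_i)$ in a generalized eigenspace of $\bar f$ for the second) are carried out without gaps. Your remark about grouping conjugate eigenvalues to keep the polynomial factors real is the right way to reconcile the argument with the paper's slightly loose phrasing that each $E_i^\CC$ carries "only one" complex eigenvalue.
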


\begin{corollary}
\label{cor:partialhyperb}
Let $W \subset V$ be $f$-invariant subspace. If $f$ is (partially) hyperbolic, then
$f_{\mid W}: W \to W$ and $\bar{f}: V/W \to V/W$ are (partially) hyperbolic.
\end{corollary}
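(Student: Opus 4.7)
The proof splits naturally along the two cases of Definition~\ref{def:hyperb}, and both will reduce to a direct reading of Lemma~\ref{le:spectre}.

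The plan is to dispose of the hyperbolic case first. By Lemma~\ref{le:spectre}, the spectral decomposition of $f_{\mid W}$ is $W = \bigoplus_i (E_i \cap W)$ and that of $\bar f$ is $V/W = \bigoplus_i p(E_i)$. Consequently the complex eigenvalues of $f_{\mid W}$ and of $\bar f$ are subsets of the set $\{\lambda_1,\dots,\lambda_k\}$ of complex eigenvalues of $f$ (with an eigenvalue $\lambda_i$ simply dropped when the corresponding intersection, resp.\ projection, is trivial). If none of the $\lambda_i$ has vanishing real part, the same is true for these subsets, so $f_{\mid W}$ and $\bar f$ are hyperbolic.

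Now I would handle the partially hyperbolic case by reducing it to the hyperbolic one just proved. Assume the map $\hat f$ induced by $f$ on $V/\ker f$ is hyperbolic.

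For the restriction: note that $\ker(f_{\mid W}) = W \cap \ker f$, so the inclusion $W \hookrightarrow V$ descends to an injective linear map $\iota: W/(W \cap \ker f) \hookrightarrow V/\ker f$ whose image is $\hat f$-invariant and intertwines the map induced by $f_{\mid W}$ with the restriction of $\hat f$. Applying the already-established hyperbolic case to $\hat f$ and the subspace $\iota(W/(W\cap\ker f))$ shows that the map induced by $f_{\mid W}$ on $W/\ker(f_{\mid W})$ is hyperbolic, i.e.\ $f_{\mid W}$ is partially hyperbolic.

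For the quotient: let $U := f^{-1}(W)$; this is an $f$-invariant subspace containing both $W$ and $\ker f$. A direct computation gives $\ker(\bar f) = U/W$, hence $(V/W)/\ker(\bar f) \cong V/U$, and the map induced by $\bar f$ on this quotient coincides with the map induced by $f$ on $V/U$. Since $\ker f \subseteq U$, the space $V/U$ is itself a quotient of $V/\ker f$, and the induced map is obtained from $\hat f$ by passing to this further quotient. Applying the hyperbolic case once more (to $\hat f$ and the invariant subspace $U/\ker f \subset V/\ker f$) shows that the induced map on $V/U$ is hyperbolic, which is exactly partial hyperbolicity of $\bar f$.

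The only delicate point is keeping the identifications straight in the partially hyperbolic half, in particular recognizing $(V/W)/\ker(\bar f)$ as $V/f^{-1}(W)$ and seeing it as a quotient of $V/\ker f$; once these identifications are in place, everything follows mechanically from the hyperbolic case, which is itself just Lemma~\ref{le:spectre}.
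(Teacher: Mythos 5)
Your proof is correct, and the hyperbolic half is exactly the one-line argument the paper intends: by Lemma~\ref{le:spectre} the spectral decompositions of $f_{\mid W}$ and $\bar{f}$ are assembled from the pieces $E_i\cap W$ and $p(E_i)$, so their eigenvalue sets are contained in that of $f$. Where you diverge is the partially hyperbolic half. The paper states the corollary with no written proof, and the shortest route consistent with its setup is again a direct reading of Lemma~\ref{le:spectre}: partial hyperbolicity of $f$ (in the sense of Definition~\ref{def:hyperb}) amounts to saying that every $\lambda_i$ is either zero or has nonzero real part and that $f$ vanishes on the generalized $0$-eigenspace $E_0$, i.e.\ $E_0=\ker f$; the lemma then identifies the generalized $0$-eigenspaces of $f_{\mid W}$ and $\bar f$ as $E_0\cap W$ and $p(E_0)$, on which the induced maps obviously vanish, and the remaining eigenvalues have nonzero real part because they are among the $\lambda_i$. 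You instead reduce the partial case to the hyperbolic case by passing to $V/\ker f$, which forces you to verify the identifications $\ker(f_{\mid W})=W\cap\ker f$, $\ker(\bar f)=f^{-1}(W)/W$, and $(V/W)/\ker(\bar f)\cong V/f^{-1}(W)$ as a quotient of $V/\ker f$. All of these checks are correct (in particular $W+\ker f$ and $f^{-1}(W)$ are indeed $f$-invariant and contain $\ker f$), so the argument goes through; it is simply longer than necessary, trading one application of Lemma~\ref{le:spectre} for a sequence of quotient bookkeeping steps. The direct route has the additional minor advantage of making visible the equivalent characterization of partial hyperbolicity via $E_0=\ker f$, which is the form in which the notion is actually used later in the paper (e.g.\ in \S~\ref{sub:nilZk}).
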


\subsection{Lattices in  Lie groups}

\subsubsection{The nilpotent case}
\label{sub:latticenil}
Let $N$ be a simply connected nilpotent Lie group, and $\Lambda \subset N$ a uniform lattice.

Define inductively $N_{k+1} := [N, N_k]$. For some integer $n$, $N_{n+1}$ is trivial.

\begin{theorem}[Corollary 2 of Theorem 2.3 in \cite{raghunathan}]
\label{th:latticenil}
For every integer $k$, the intersection $\Lambda_k = \Lambda \cap N_k$ is a lattice in $N_k$.
\end{theorem}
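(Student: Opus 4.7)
The plan is to proceed by induction on $k \geq 1$; the base case $k = 1$ is precisely the hypothesis that $\Lambda$ is a uniform lattice in $N_1 = N$. Assuming $\Lambda_k := \Lambda \cap N_k$ is a uniform lattice in $N_k$, I would then show that $\Lambda_{k+1} = \Lambda \cap N_{k+1}$ is a uniform lattice in $N_{k+1} = [N, N_k]$, where along the way I use that every $N_j$, being a closed connected subgroup of the simply connected nilpotent group $N$, is itself simply connected.

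The main step relies on Mal'cev's theory of finitely generated torsion-free nilpotent groups. Since $\Lambda$ is cocompact in a simply connected nilpotent Lie group, it is finitely generated, torsion-free and nilpotent; the same holds for the abstract subgroup $\Delta \subset \Lambda$ generated by commutators $[\lambda, \mu]$ with $\lambda \in \Lambda$ and $\mu \in \Lambda_k$ (finite generation of $\Delta$ follows from the fact that subgroups of finitely generated nilpotent groups are finitely generated). Clearly $\Delta \subset \Lambda_{k+1}$. Mal'cev's theorem attaches to every such abstract group a canonical simply connected nilpotent Lie group, its Mal'cev completion, in which it embeds as a uniform lattice; the construction is functorial in abstract group homomorphisms. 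Applying functoriality to $\Lambda \hookrightarrow N$ and $\Lambda_k \hookrightarrow N_k$ (the latter by inductive hypothesis), one identifies the Mal'cev completion of $\Delta$ with the analytic subgroup $[N, N_k] = N_{k+1}$. Hence $\Delta$ is itself a uniform lattice in $N_{k+1}$. Since $\Lambda_{k+1}$ is a discrete subgroup of $N_{k+1}$ containing $\Delta$, the quotient $\Lambda_{k+1}/\Delta$ embeds as a discrete subset of the compact space $N_{k+1}/\Delta$ and is therefore finite; so $\Lambda_{k+1}$ itself is a uniform lattice in $N_{k+1}$, closing the induction.

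The main obstacle lies in justifying that the Mal'cev completion of $\Delta$ equals the analytic subgroup $N_{k+1}$, rather than some \emph{a priori} smaller closed connected subgroup. Concretely, one must show that the $\QQ$-span of $\log \Delta$ in the Lie algebra $\mathfrak{n}$, equipped with the rational structure coming from $\log \Lambda$, coincides with $\mathfrak{n}_{k+1}$. This is handled via the Baker-Campbell-Hausdorff formula, which terminates because $N$ is nilpotent: the group commutator $[\exp X, \exp Y]$ equals $\exp([X, Y])$ modulo higher-order nested brackets, so the $\QQ$-lattice generated by $\log \Delta$ matches the $\QQ$-span of $[\log \Lambda, \log \Lambda_k]$, which by the inductive hypothesis fills out $[\mathfrak{n}, \mathfrak{n}_k] = \mathfrak{n}_{k+1}$. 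Exponentiating then yields $N_{k+1}$. This argument is essentially the content of \cite[Theorem 2.3]{raghunathan}, from which the stated corollary follows immediately.
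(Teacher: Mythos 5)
The paper itself offers no proof of this statement: it is imported verbatim from Raghunathan \cite{raghunathan}, where it is deduced from the fact that $\log\Lambda$ spans a $\QQ$-form of $\mathfrak{n}$ and that the terms $\mathfrak{n}_k$ of the descending central series, being generated by iterated brackets of rational elements, are defined over $\QQ$, so that the corresponding analytic subgroups meet $\Lambda$ in lattices. Your Mal'cev-completion argument is the same circle of ideas in different packaging, and its skeleton is sound: induction on $k$, the subgroup $\Delta$ generated by the commutators $[\Lambda,\Lambda_k]$, and the finiteness of $\Lambda_{k+1}/\Delta$ inside the compact nilmanifold $N_{k+1}/\Delta$ all work as you describe.

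The one step where you assert more than you justify is the claim that the $\QQ$-span of $\log\Delta$ \emph{matches} the $\QQ$-span of $[\log\Lambda,\log\Lambda_k]$. Baker--Campbell--Hausdorff only gives $\log[\exp X,\exp Y]\equiv [X,Y]$ modulo $\mathfrak{n}_{k+2}$: the correction terms are Lie monomials of degree at least $3$ involving $Y\in\mathfrak{n}_k$ and at least two further letters, hence lie in $\mathfrak{n}_{k+2}$. So if $V$ is the $\QQ$-span of $\log\Delta$ and $W=V\otimes\RR$, what follows directly is only $W\subseteq\mathfrak{n}_{k+1}$ and $W+\mathfrak{n}_{k+2}=\mathfrak{n}_{k+1}$, which is weaker than the equality $W=\mathfrak{n}_{k+1}$ needed to identify the Mal'cev completion of $\Delta$ with $N_{k+1}$. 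The gap is repairable: $\Lambda_k$ is normal in $\Lambda$, hence so is $\Delta$, hence $V$ is ${\rm Ad}(\Lambda)$-invariant and $W$ is an ideal of $\mathfrak{n}$ (each $e^{{\rm ad}(\log\lambda)}$ preserves $W$ and $\log\Lambda$ spans $\mathfrak{n}$). Then $\mathfrak{n}_{k+2}=[\mathfrak{n},\mathfrak{n}_{k+1}]=[\mathfrak{n},W+\mathfrak{n}_{k+2}]\subseteq W+\mathfrak{n}_{k+3}$, so $\mathfrak{n}_{k+1}\subseteq W+\mathfrak{n}_{k+j}$ for every $j$, and since the central series terminates this forces $W=\mathfrak{n}_{k+1}$. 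With that supplement (or, alternatively, with a descending induction starting from the deepest term $N_n$) your proof closes.
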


\begin{remark}
\label{rk:latticenil}
Let $\bar{N}_k$ be the quotient $N_k/N_{k+1}$. It is isomorphic to $\RR^{p_k}$ for some integer $p_k$.
It follows from Theorem~\ref{th:latticenil} that the projection $\bar{\Lambda}_k$ of $\Lambda_k$
in $\bar{N}_k$ is a lattice. Hence $\bar{N}_k$ is naturally equipped with a structure over $\ZZ$,
which is preserved by every automorphism of $N$ preserving $\Lambda$. More precisely,
let $f: N \to N$ be such an automorphism, such that $f(\Lambda)=\Lambda$. Then
there are induced linear maps $f_k: \bar{N}_k \to \bar{N}_k$. Once fixed a basis of $\Lambda_k$,
this linear map is identified with a matrix in ${\rm GL}(p_k, \ZZ)$. In particular, the $1$-eigenspace and
the generalized $1$-eigenspace  of $f_k$ are rational subspaces, ie. their intersections with $\Lambda_k$ are lattices.
\end{remark}

\subsubsection{The solvable case}
Here we assume that $G$ is solvable. Then it contains a maximal normal nilpotent subgroup: the nilradical $N$.
The first commutator subgroup $[\Gamma, \Gamma]$ is nilpotent, hence contained in $N$.

\begin{theorem}[Corollary 3.5 in \cite{raghunathan}]
\label{thm:latticesol}
Let $\Gamma$ be a lattice in a connected solvable Lie group $G$. Then $\Gamma \cap N$ is a lattice in $N$,
and the projection of $\bar{\Gamma}$ in $G/N$ is a lattice.
\end{theorem}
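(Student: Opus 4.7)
The plan is to argue by induction on $\dim G$, using at each step the following standard projection lemma: if $H$ is a closed normal subgroup of $G$ and $\Gamma H$ is closed in $G$, then $\Gamma \cap H$ is a lattice in $H$ and the image of $\Gamma$ in $G/H$ is a lattice. The base case is $G$ abelian, where $N=G$ and the statement is trivial.

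For the inductive step, I would isolate the last nontrivial term $A$ of the derived series of $G$; this is a connected closed normal abelian subgroup, automatically contained in $N$. The main task is to show that $\Gamma A$ is closed in $G$. Once this is established, the projection lemma produces a lattice $\bar{\Gamma} = \Gamma A/A$ in the solvable group $G/A$, to which the induction hypothesis applies: $\bar{\Gamma}$ meets the nilradical $N/A$ of $G/A$ in a lattice, and projects to a lattice in $(G/A)/(N/A) \simeq G/N$. Pulling this back through the exact sequence $1 \to A \to G \to G/A \to 1$, combined with $\Gamma \cap A$ being a lattice in $A$, assembles to give $\Gamma \cap N$ a lattice in $N$ and $\Gamma N/N$ a lattice in $G/N$; the remaining internal structure inside the nilpotent group $N$ is controlled by Theorem~\ref{th:latticenil} applied to $\Gamma \cap N$.

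The main obstacle is the closedness of $\Gamma A$. I would first reduce to the case where $G$ is simply connected by passing to a finite cover. Then I would invoke (or prove as a preliminary) Mostow's theorem that every lattice in a connected solvable Lie group is uniform, so that $\Gamma\backslash G$ is compact. Combined with trigonalizability of $\op{Ad}(G)$ acting on $\cA$ over $\CC$, an analysis of the eigenvalues of $\op{Ad}(\gamma)|_{\cA}$ for $\gamma \in \Gamma$ forces the orbit of the base point in $G/A$ under $\Gamma$ to be locally closed, hence closed (a non-closed orbit would contradict finite-volume descent of the Haar measure on $\Gamma\backslash G$ to the quotient by $A$). The trigonalizability ensures that the weights of $\op{Ad}(\Gamma)$ on $\cA$ sit on a discrete lattice of characters, preventing accumulation in $G/A$.

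The delicate point, and the one I would have to handle most carefully, is this closedness of $\Gamma A$: it is precisely the step where solvability (as opposed to general Lie structure) is used in an essential way, through the interplay of uniformity of $\Gamma$, trigonalizability of the adjoint representation, and discreteness of the associated character lattice. Everything else is a bookkeeping exercise in short exact sequences and the projection lemma.
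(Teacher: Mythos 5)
First, a point of calibration: the paper offers no proof of this statement. It is quoted verbatim as Corollary 3.5 of \cite{raghunathan} (a theorem of Mostow) and used as a black box, so the only meaningful comparison is with the classical proof in the literature. Your outline does reproduce the correct global architecture of that proof --- induction on dimension, the projection lemma for closed normal subgroups, and Mostow's theorem that lattices in connected solvable Lie groups are uniform --- but the step you yourself flag as delicate, the closedness of $\Gamma A$, is exactly where the entire difficulty of the theorem lives, and the justification you offer for it does not hold up. The assertion that ``trigonalizability ensures that the weights of ${\rm Ad}(\Gamma)|_{\cA}$ sit on a discrete lattice of characters'' is unsubstantiated: the eigenvalue characters $\gamma \mapsto \lambda_i(\gamma)$ of a lattice in a solvable group have no a priori reason to form a discrete subgroup of $\CC^*$ --- elliptic eigenvalues could a priori be dense in the unit circle, which is precisely the mechanism by which $\Gamma H$ fails to be closed for a general closed normal subgroup $H$ --- and ruling this out for subgroups of the nilradical is essentially equivalent in depth to the theorem itself. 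The proofs in the literature need genuinely heavier input at this point: Raghunathan's route goes through the fact that the Zariski closure of ${\rm Ad}(\Gamma)$ contains ${\rm Ad}(N)$ (his Theorem 3.2), and Mostow's original argument uses the algebraic hull of the polycyclic group $\Gamma$. As written, your plan assumes the conclusion at its crucial step.

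Two further points would need repair even if the main gap were filled. You cannot reduce to the simply connected case ``by passing to a finite cover'': the universal cover of a connected solvable Lie group is in general an infinite cover (already for a torus); the correct reduction is to pull $\Gamma$ back to the universal covering group, where its preimage is again a lattice. More seriously for the bookkeeping of your induction, the nilradical of $G/A$ may be strictly larger than $N/A$ (passing to a quotient can enlarge the nilradical, since an element with nonzero ${\rm ad}$-eigenvalues concentrated on $\cA$ becomes ${\rm ad}$-nilpotent modulo $\cA$). The induction hypothesis applied to $G/A$ therefore yields a lattice in the nilradical of $G/A$, not in $N/A$; extracting from this a lattice in $N/A$ is an additional statement of the same nature as the one being proved, and your assembly step silently identifies the two.
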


\subsubsection{Semisimple Lie groups}
\label{sub:ss}
Let $G$ be a real semi-simple Lie group, and $\cG$ its Lie algebra. It splits has a product of simple factors
$G = F_1 \times ... \times F_m$ which is unique, up to permutation of the factors.

A crucial point is that $\cG$ is isomorphic to the Lie algebra of an algebraic subgroup (the adjoint of $G$)
in ${\rm GL}(n, \RR)$. In particular, every element $\mathfrak u$ of $\cG$ splits uniquely as a sum
${\mathfrak u} = {\mathfrak u}_n + {\mathfrak u}_{hyp} + {\mathfrak u}_e$ of two-by-two commuting elements
such that:

-- ${\mathfrak u}_n$ is nilpotent,

-- ${\mathfrak u}_{hyp}$ is hyperbolic, ie. the adjoint action of ${\mathfrak u}_{hyp}$ on $\cG$ is $\RR$-diagonalizable,

-- ${\mathfrak u}_e$ is elliptic, ie. ${\rm ad}({\mathfrak u}_e)$ is diagonalizable over $\CC$, and the eigenvalues have
all zero real part.

Every element commuting with ${\mathfrak u}$ commutes with each factor ${\mathfrak u}_n$, ${\mathfrak u}_{hyp}$, ${\mathfrak u}_e$.

${\mathfrak u}$ is \textit{semisimple} if its nilpotent part ${\mathfrak u}_n$ is $0$, ie. if
${\rm ad}({\mathfrak u})$ is $\CC$-diagonalizable.

\begin{definition}
\label{def:cartanalg}
A \textbf{Cartan subalgebra} is an abelian subalgebra of $\cG$ comprising semisimple elements
and maximal for this property. A Lie group tangent to a Cartan subalgebra is a \textbf{Cartan subgroup}.

A \textbf{Cartan subspace} is an abelian subalgebra of $\cG$ comprising hyperbolic elements
and maximal for this property. A Lie group tangent to a Cartan subspace is a \textbf{split Cartan subgroup}.
\end{definition}

Every Cartan subalgebra is equal to its own centralizer in $\cG$, whereas the centralizer of a Cartan subspace ${\mathfrak a}$
is always a sum ${\mathfrak a} + \cK$ where $\cK$ is a compact Lie algebra, meaning that $\cK$ is tangent in the adjoint group of $G$
to a compact subgroup $K$.

In $\cG$ there is only one Cartan subspace up to conjugacy, but, even if finite, the number
of conjugacy classes of Cartan subalgebras can be $>1$.

For example, in $\mathfrak{so}(1,3) \approx \mathfrak{sl}(2,\CC)$, there are two Cartan subalgebras not conjugate one to the other: one tangent
to the maximal abelian subgroup $\op{SO}_0(1,1) \times \op{SO}_0(2)$ of $\op{SO}_0(1,3)$, and the other tangent to the maximal abelian subgroup of diagonal matrices
in $\op{PSL}(2,\CC)$.

More precisely, Cartan subalgebras are characterized up
to conjugacy by the maximal dimension of its intersection with a Cartan subspace. In particular,
Cartan subalgebras containing an entire Cartan subspace are conjugate one to the other. They are of the form
${\mathfrak a} + {\mathcal T}$ where ${\mathcal T}$ is a maximal torus in the compact part $\cK$ of the centralizer of a Cartan subspace ${\mathfrak a}$.
We call them \textbf{hyperbolic Cartan subalgebras}.

Cartan subalgebras, hyperbolic or not, has all the same dimension, called the \textit{rank} of $G$. The common
dimension of Cartan subspaces is the \textit{$\RR$-rank} of $G$.

Note that the classical results we have stated above hold mainly in the case where $G$ is its own adjoint, ie. has trivial center.
It may happen that $G$ is some infinite covering over its adjoint group, and, for example, that the group $K$ tangent in $G$
to $\cK \subset \cZ(\fa)$ is an infinite covering
over a compact subgroup, therefore is not anymore compact. Moreover, results about lattices are more commonly stated
in the case when $G$ has a finite center. We can actually reduce to this case, thanks to the following lemma:

\begin{lemma}[Proposition 1.3, p. 68 in \cite{vinberg2}]
\label{le:centrefini}
Let $G$ be a real semisimple group without compact factors, $Z$ the center of $G$, and $\Gamma$ a uniform lattice. Then, $\Gamma \cap Z$ has a finite index in $Z$.\qed
\end{lemma}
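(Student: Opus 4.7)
The plan is to argue by contradiction from the assumption that $Z/(\Gamma\cap Z)$ is infinite, exploiting three structural facts: $Z$ is discrete in $G$ (since $\cG$ is semisimple its center is trivial, so $Z=\ker\op{Ad}$ has discrete Lie subalgebra), the quotient $\Gamma\backslash G$ is compact, and $\Gamma$ is Zariski dense in $G$ by the Borel density theorem (this is precisely where the absence of compact factors will enter).

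First I would consider the right action of $Z$ on $\Gamma\backslash G$, which is well-defined by centrality, and observe that the orbit of the base point $\Gamma\cdot e$ is in bijection with $Z/(\Gamma\cap Z)$. If this orbit were infinite, compactness of $\Gamma\backslash G$ would provide a sequence $z_n\in Z$ lying in pairwise distinct cosets modulo $\Gamma\cap Z$ with $\Gamma z_n\to \Gamma g$ in $\Gamma\backslash G$; lifting along the covering $G\to \Gamma\backslash G$, one gets $\gamma_n\in \Gamma$ with $\gamma_n z_n\to g$ in $G$.

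The next step is to exploit the centrality of the $z_n$ to control the sequence $\gamma_n$. For every $h\in G$, the identity $[h,\gamma_n z_n]=h\gamma_n h^{-1}\gamma_n^{-1}=[h,\gamma_n]$ combined with $\gamma_n z_n\to g$ yields $[h,\gamma_n]\to [h,g]$. Specializing to $h=\exp(tX)$ and unwinding via Baker--Campbell--Hausdorff to first order in $t$, this forces $\op{Ad}(\gamma_n)X\to \op{Ad}(g)X$ for every $X\in\cG$, so $\op{Ad}(\gamma_n)\to \op{Ad}(g)$ in $\op{Ad}(G)$.

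The crucial step, and the main obstacle, will be to invoke the Borel density theorem to deduce that $\op{Ad}(\Gamma)$ is \emph{discrete} in $\op{Ad}(G)$. A Zassenhaus--Kazhdan--Margulis style argument should do it: if $\op{Ad}(\gamma_n)\to e$, write lifts as $\gamma_n=z_n u_n$ with $z_n\in Z$ and $u_n\to e$ in $G$; centrality gives $[\gamma_m,\gamma_n]=[u_m,u_n]\to e$, which, by discreteness of $\Gamma$, forces the commutators to vanish eventually, so the $\gamma_n$ generate an abelian subgroup of $\Gamma$; Borel density then implies this subgroup is central, i.e. $\gamma_n\in Z$ eventually. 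Granted the discreteness of $\op{Ad}(\Gamma)$, the convergence $\op{Ad}(\gamma_n)\to \op{Ad}(g)$ forces $\op{Ad}(\gamma_n)=\op{Ad}(\gamma^*)$ for a fixed $\gamma^*\in \Gamma$ and $n$ large, so $\gamma_n=\gamma^* z'_n$ with $z'_n\in Z\cap\Gamma$. Substituting into $\gamma_n z_n\to g$ yields $z'_n z_n\to (\gamma^*)^{-1}g$ in $G$; discreteness of $Z$ forces $z'_n z_n$ to be eventually a fixed $z^*\in Z$, whence $z_n=(z'_n)^{-1}z^*$ all lie in the single coset $z^*(\Gamma\cap Z)$, contradicting the choice of the $z_n$ in pairwise distinct cosets.
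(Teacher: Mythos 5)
The paper gives no proof of this lemma --- it is quoted from Onishchik--Vinberg --- so the only question is whether your argument stands on its own. Its architecture is the standard one (reduce everything to the discreteness of $\op{Ad}(\Gamma)$ in $\op{Ad}(G)$, equivalently of $Z\Gamma$ in $G$), the compactness/lifting setup is fine, and the endgame after discreteness is correct. (As an aside, the commutator-plus-BCH detour to get $\op{Ad}(\gamma_n)\to\op{Ad}(g)$ is unnecessary: since $Z=\ker\op{Ad}$, one has $\op{Ad}(\gamma_n)=\op{Ad}(\gamma_n z_n)\to\op{Ad}(g)$ directly by continuity of $\op{Ad}$.)

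The genuine gap is in the step you yourself flag as crucial: ``the $\gamma_n$ generate an abelian subgroup of $\Gamma$; Borel density then implies this subgroup is central.'' Borel density says that $\Gamma$ is Zariski dense, whence the \emph{centralizer of all of $\Gamma$} equals $Z$; it says nothing about abelian subgroups of $\Gamma$ being central, and that implication is false: a cocompact lattice in $\op{PSL}(2,\RR)$ has trivial center but contains many infinite cyclic subgroups, and higher-rank cocompact lattices contain noncentral free abelian subgroups of rank equal to the rank of $G$ (flats). Your computation $[\gamma_m,\gamma_n]=[u_m,u_n]\to e$ only shows the $\gamma_n$ commute \emph{with one another}, which is not enough. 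The fix is to commute them against the whole lattice instead: for each fixed $\delta\in\Gamma$ one has $[\delta,\gamma_n]=[\delta,u_n]\to e$ with $[\delta,\gamma_n]\in\Gamma$ discrete, hence $[\delta,\gamma_n]=e$ for $n$ large; since a uniform lattice is finitely generated, running this over a finite generating set shows $\gamma_n\in Z_G(\Gamma)$ for $n$ large, and \emph{now} Borel density (this is exactly where ``no compact factors'' enters) gives $Z_G(\Gamma)=Z$, i.e. $\gamma_n\in Z$ eventually. Alternatively, the whole discreteness step has a cleaner classical proof you may prefer: $Z\Gamma$ lies in the normalizer $N_G(\Gamma)$, which is closed, and whose identity component centralizes the discrete group $\Gamma$ and hence lies in $Z_G(\Gamma)=Z$, which is discrete; so $N_G(\Gamma)$ is discrete, so $Z\Gamma$ is a discrete subgroup containing the lattice $\Gamma$, forcing $[Z\Gamma:\Gamma]=[Z:Z\cap\Gamma]<\infty$ by positivity of covolume --- which also lets you skip the compactness and lifting argument entirely.
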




We will also need the following folk fact:

\begin{lemma}
\label{le:latticesl2}
Let $\tilde{\Gamma}$ be a lattice in the universal covering $\widetilde{\op{SL}}(2,\RR)$. Then, the first commutator group $[\tilde{\Gamma}, \tilde{\Gamma}]$ contains a finite index subgroup of the center of $\widetilde{\op{SL}}(2,\RR)$.
\end{lemma}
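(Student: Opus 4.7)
The plan is to exploit the central extension produced by the covering map $p: \widetilde{\op{SL}}(2,\RR) \to \op{PSL}(2,\RR)$, whose kernel is the infinite cyclic center $Z$. First, $\widetilde{\op{SL}}(2,\RR)/\tilde{\Gamma}$ is a $Z/(Z \cap \tilde{\Gamma})$-cover of $\op{PSL}(2,\RR)/\bar{\Gamma}$, where $\bar{\Gamma} := p(\tilde{\Gamma})$, and finite covolume of $\tilde{\Gamma}$ forces both that $Z_\Gamma := Z \cap \tilde{\Gamma}$ has finite index in $Z$ and that $\bar{\Gamma}$ is a lattice in $\op{PSL}(2,\RR)$. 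Assuming $\tilde{\Gamma}$ uniform (the case required in the intended application), $\bar{\Gamma}$ is uniform as well; applying Selberg's lemma, I pass to a finite-index torsion-free subgroup and reduce to the case where $\bar{\Gamma} = \pi_1(\Sigma_g)$ for a closed orientable hyperbolic surface $\Sigma_g$ of genus $g \geq 2$. We are then reduced to analyzing the central extension
\begin{equation*}
1 \longrightarrow Z_\Gamma \longrightarrow \tilde{\Gamma} \longrightarrow \bar{\Gamma} \longrightarrow 1,
\end{equation*}
and the goal becomes to show that $[\tilde{\Gamma},\tilde{\Gamma}] \cap Z_\Gamma$ has finite index in $Z_\Gamma$.

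The main tool is the five-term exact sequence in integral group homology associated to this central extension,
\begin{equation*}
H_2(\tilde{\Gamma};\ZZ) \longrightarrow H_2(\bar{\Gamma};\ZZ) \stackrel{\tau}{\longrightarrow} Z_\Gamma \stackrel{i_\ast}{\longrightarrow} H_1(\tilde{\Gamma};\ZZ) \longrightarrow H_1(\bar{\Gamma};\ZZ) \longrightarrow 0.
\end{equation*}
Since the kernel of $i_\ast: Z_\Gamma \to \tilde{\Gamma}^{\mathrm{ab}}$ is exactly $Z_\Gamma \cap [\tilde{\Gamma},\tilde{\Gamma}]$, exactness identifies this intersection with the image of the transgression $\tau$, so it remains to show that $\tau$ has finite-index image in $Z_\Gamma \cong \ZZ$.

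For the final step, I identify $\tau$ via a geometric Euler number. The extension above is the restriction to $\bar{\Gamma}$ of the universal central extension $Z \to \widetilde{\op{SL}}(2,\RR) \to \op{PSL}(2,\RR)$, further cut down to the subgroup $Z_\Gamma$; equivalently, it is the extension of fundamental groups associated to the circle bundle $\bar{\Gamma}\backslash\op{PSL}(2,\RR) \to \Sigma_g$, the unit tangent bundle of $\Sigma_g$. The corresponding class in $H^2(\bar{\Gamma};Z_\Gamma)$ is therefore a non-zero integer multiple of the Euler class of this bundle, whose Euler number equals $\chi(\Sigma_g) = 2-2g$, non-zero for $g \geq 2$ by Gauss-Bonnet. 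Under the standard identification of $\tau$ with pairing against the fundamental class $[\Sigma_g] \in H_2(\bar{\Gamma};\ZZ) = \ZZ$, the image of $\tau$ is generated by this non-zero Euler number, and thus is of finite index in $Z_\Gamma \subset Z$. The delicate point in executing this plan is precisely the geometric identification of the abstract group-theoretic transgression with the Euler class of the circle bundle; once that is in place, non-vanishing is just Gauss-Bonnet.
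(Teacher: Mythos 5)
Your proof is correct and is essentially the paper's own argument: both reduce to the fact that the central extension $1 \to Z_\Gamma \to \tilde{\Gamma} \to \pi_1(\Sigma_g) \to 1$ coming from the unit tangent bundle has nonvanishing Euler number $2-2g$, the paper extracting this from an explicit lifted surface-group presentation $[\tilde{a}_1,\tilde{b}_1][\tilde{a}_2,\tilde{b}_2]=h^e$ with $e\neq 0$, while you phrase the same computation via the five-term exact sequence and the transgression. Your version is if anything slightly more careful (Selberg's lemma to remove torsion, and the explicit restriction to uniform lattices, which is genuinely needed since for a non-uniform lattice the projected group is free and the extension splits).
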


\begin{proof}
Let $p: \widetilde{\op{SL}}(2,\RR) \to \op{PSL}(2,\RR)$ be the universal covering. The center $Z$ of $\op{PSL}(2,\RR)$ is an infinite cyclic subgroup.
The projection $\Gamma = p(\tilde{\Gamma})$ is a lattice in $\op{PSL}(2,\RR)$, hence a
a closed surface group; let say of genus $2$. It admits a presentation of the form:
$$\langle a_1, b_1, a_2, b_2 \mid [a_1, b_1][a_2, b_2]=1 \rangle$$

It follows that $\op{PSL}(2,\RR)$ has a presentation of the form:
$$\langle \tilde{a}_1, \tilde{b}_1, \tilde{a}_2, \tilde{b}_2, h \mid [h, a_i]=1, \; [h, b_i] =1, [\tilde{a}_1, \tilde{b}_1][\tilde{a}_2, \tilde{b}_2]= h^e \rangle$$

where $h$ lies in $Z$. The key fact is that the integer $e$ cannot vanish: it is related to the fact that
the unit tangent bundle of a closed surface of genus $g \geq 2$ is $2g-2$, hence nonzero.

For more details, see for example section $6$ in \cite{ghysolodov}.
\end{proof}

\subsubsection{Splitting of lattices in solvable/semisimple parts}

Consider now a general Lie group, which is not solvable, but also not semisimple: $\cG$
contains a normal solvable ideal. All these normal solvable ideals are contained in one of them,
the \textit{radical} $\cR$. The quotient algebra $\cG/\cR$ is semisimple.

\begin{theorem}[Levi-Malcev, see  Theorem~4.1 p. 18, and Theorem 4.3 p.20  in \cite{onishchik}]
\label{thm:levi}
The exact sequence $0 \to \cR \to \cG \to \cG/\cR \to 0$ splits, ie. $\cG$ contains a subalgebra
$\cL$, called \textbf{Levi factor}, which projects one-to-one over $\cG/\cR$.
Moreover, $\cL$ is unique up to conjugacy.
\end{theorem}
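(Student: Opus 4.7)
The plan is to prove this by induction on the derived length $\ell$ of the solvable radical $\cR$, reducing everything to the case where $\cR$ is abelian, and then invoking Lie-algebra cohomology in low degree.

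First I would set up the induction. If $\ell = 0$ then $\cR = 0$ and there is nothing to prove. For $\ell \geq 1$, set $\cR' = [\cR, \cR]$, which is a characteristic ideal of $\cG$ strictly smaller than $\cR$ in derived length. The radical of $\cG/\cR'$ is $\cR/\cR'$, which is abelian, so if I already know the theorem in the abelian-radical case I get a section $\sigma: \cG/\cR \hookrightarrow \cG/\cR'$. Pulling $\sigma(\cG/\cR)$ back along $\cG \to \cG/\cR'$ yields a subalgebra $\cG_1 \subset \cG$ whose radical is $\cR'$, of strictly smaller derived length, to which the induction hypothesis applies; its Levi factor is then a Levi factor of $\cG$.

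Next I would handle the abelian case. When $\cR$ is abelian, the adjoint action factors through $\cS := \cG/\cR$ and turns $\cR$ into a finite-dimensional $\cS$-module. Splittings of $0 \to \cR \to \cG \to \cS \to 0$ correspond to $2$-cocycles $\cS \otimes \cS \to \cR$ modulo coboundaries, i.e.\ to classes in $H^2(\cS, \cR)$. By \emph{Whitehead's second lemma}, $H^2(\cS, V) = 0$ for every finite-dimensional module $V$ over a semisimple Lie algebra $\cS$, so a Levi factor $\cL$ exists. For uniqueness up to conjugacy, again reduce to the abelian case by induction, and observe that two Levi factors $\cL_1, \cL_2$ are images of two sections $s_1, s_2$ whose difference $s_2 - s_1$ is a $1$-cocycle of $\cS$ with values in the module $\cR$. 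By \emph{Whitehead's first lemma}, $H^1(\cS, \cR) = 0$, so there is $r \in \cR$ with $s_2(x) - s_1(x) = x \cdot r$ for every $x \in \cS$; since $\cR$ is abelian, the inner automorphism $\exp(\mathrm{ad}\, r)$ conjugates $\cL_1$ onto $\cL_2$.

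The main obstacle is genuinely concentrated in Whitehead's two lemmas. The standard route uses the Casimir element $c \in U(\cS)$: it is central, and on any nontrivial irreducible summand of $V$ it acts by a nonzero scalar, while on the trivial summand it acts by $0$. Combined with complete reducibility of finite-dimensional representations of semisimple Lie algebras (itself a consequence of the Casimir together with Weyl's theorem), this allows one to average any cocycle against a preimage of $c$ and exhibit it as a coboundary. Everything else in the plan is formal bookkeeping about extensions, so the only real technical input is the existence and nondegeneracy properties of the Casimir element.
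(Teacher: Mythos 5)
The paper does not actually prove this statement: Theorem~\ref{thm:levi} is quoted as classical background, with the proof delegated to Theorem~4.1 and Theorem~4.3 of \cite{onishchik}. So there is no internal argument to compare yours against; what you have written is the standard textbook proof of Levi--Malcev (induction on the derived length of $\cR$ to reduce to an abelian radical, then Whitehead's second lemma for existence and Whitehead's first lemma plus $\exp({\rm ad}\,r)$ for conjugacy), and it is essentially correct. Two small points of care. First, your gloss on Whitehead's second lemma is slightly too quick: the Casimir acts by $0$ on the trivial isotypic component of $\cR$, so the ``divide by the Casimir eigenvalue'' argument only kills $H^2(\cS,V)$ for $V$ without trivial summands; the case $H^2(\cS,k)=0$ (i.e.\ splitting of central extensions of $\cS$) needs a separate argument, e.g.\ applying Weyl's complete reducibility to the extension $\cE$ itself viewed as a module over $\cS=\cE/k$ and taking the complement of the central line as the splitting ideal. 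This matters for your application, since the abelian radical $\cR$ may well contain trivial summands. Second, in the conjugacy statement it is worth noting why $\exp({\rm ad}\,r)$ makes sense: in the abelian step one has $({\rm ad}\,r)^2=0$ because $[r,\cG]\subset\cR$ and $[r,\cR]=0$, and the general case is a finite composition of such automorphisms, which is all the uniqueness claim of the theorem requires.
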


We also have:

\begin{theorem}[Corollary~3 in \S~ 4.3 of \cite{onishchik}]
\label{thm:simpleinLie}
Any semisimple subalgebra of $\cG$ is contained in a Levi factor of $\cG$.
\end{theorem}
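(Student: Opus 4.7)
The plan is to apply the Levi-Malcev theorem (Theorem~\ref{thm:levi}) twice: once to produce an ambient Levi factor $\cL_0$ of $\cG$, and a second time, inside a carefully chosen intermediate subalgebra, in order to conjugate the given semisimple subalgebra $\mathfrak{s}$ into $\cL_0$ by an inner automorphism that extends to all of $\cG$.

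First I would form the subalgebra $\mathfrak{s}+\cR$, where $\cR$ is the radical of $\cG$, and check that it has $\cR$ as radical and $\mathfrak{s}$ as a Levi factor. This rests on the elementary fact that $\mathfrak{s}\cap\cR$ is simultaneously semisimple and solvable, hence zero, so that $(\mathfrak{s}+\cR)/\cR$ is isomorphic to $\mathfrak{s}$ and in particular semisimple.

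Next I would pick any Levi factor $\cL_0$ of $\cG$ given by Theorem~\ref{thm:levi}, and let $\pi\colon\cG\to\cL_0$ denote the projection parallel to $\cR$. The restriction $\pi|_{\mathfrak{s}}$ is an injective Lie algebra morphism, so $\pi(\mathfrak{s})$ is a copy of $\mathfrak{s}$ sitting inside $\cL_0$. Writing $x=\pi(x)+r(x)$ with $r(x)\in\cR$ for each $x\in\mathfrak{s}$ shows that $\pi(\mathfrak{s})\subset\mathfrak{s}+\cR$, and one verifies directly that $\pi(\mathfrak{s})$ is a second Levi complement to $\cR$ inside $\mathfrak{s}+\cR$. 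The conjugacy part of Theorem~\ref{thm:levi}, applied this time inside the subalgebra $\mathfrak{s}+\cR$, then furnishes an inner automorphism $\sigma$ of $\mathfrak{s}+\cR$ carrying $\mathfrak{s}$ onto $\pi(\mathfrak{s})$. In its sharper Malcev form, $\sigma$ equals $\mathrm{Ad}(\exp n)$ for some $n$ lying in the nilradical of $\mathfrak{s}+\cR$, and in particular $n\in\cR\subset\cG$. Hence $\sigma$ extends to an inner automorphism of $\cG$ that permutes Levi factors of $\cG$. From $\sigma(\mathfrak{s})=\pi(\mathfrak{s})\subset\cL_0$ one finally concludes $\mathfrak{s}\subset\sigma^{-1}(\cL_0)$, and $\sigma^{-1}(\cL_0)$ is the desired Levi factor of $\cG$ containing $\mathfrak{s}$.

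The main obstacle, and the reason the statement is not just a direct quotation of Theorem~\ref{thm:levi}, lies in the third step. As stated, Theorem~\ref{thm:levi} only guarantees conjugacy of Levi factors by some unspecified automorphism, whereas I need the conjugator to be inner \emph{and} to come from the exponential of an element of $\cR$, so that the same automorphism makes sense on the ambient $\cG$ and sends Levi factors to Levi factors. Without this strengthening the bookkeeping collapses, since an arbitrary automorphism of $\mathfrak{s}+\cR$ has no \emph{a priori} extension to $\cG$. Granted Malcev's refinement (inner conjugacy via $\exp$ of the nilradical), the remainder of the argument is a routine verification of how $\mathfrak{s}$ sits inside $\mathfrak{s}+\cR\subset\cG$.
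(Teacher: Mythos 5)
Your argument is correct, and it is worth noting that the paper does not actually prove Theorem~\ref{thm:simpleinLie} at all: it is quoted as Corollary~3 in \S~4.3 of \cite{onishchik}. What you have written is essentially the standard Malcev argument behind that corollary. The reduction to the subalgebra $\mathfrak{s}+\cR$ is sound ($\mathfrak{s}\cap\cR$ is a solvable ideal of the semisimple $\mathfrak{s}$, hence zero, so $\cR$ is the radical of $\mathfrak{s}+\cR$ and both $\mathfrak{s}$ and $\pi(\mathfrak{s})$ are Levi complements there), and you correctly identify the one nontrivial input: Theorem~\ref{thm:levi} as stated in the paper only asserts conjugacy of Levi factors "up to conjugacy," whereas you need the sharper Malcev form, namely conjugacy by $\exp(\operatorname{ad}n)$ with $n$ in the nilradical (in fact $n\in[\cG,\cR]$), so that the conjugator is the restriction of an inner automorphism of the ambient $\cG$ and therefore carries Levi factors of $\cG$ to Levi factors of $\cG$. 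That refinement is exactly what Theorem~4.3 of \cite{onishchik} (the reference the paper cites for Theorem~\ref{thm:levi}) provides, so your proof is consistent with the paper's own sources; granting it, the remaining steps (injectivity of $\pi|_{\mathfrak{s}}$, the inclusion $\pi(\mathfrak{s})\subset\mathfrak{s}+\cR$, and the final conjugation) all check out. In short: the paper buys the statement by citation, while you derive it from Levi--Malcev; the only caution is that your derivation silently upgrades the conjugacy clause of Theorem~\ref{thm:levi} to its inner, nilradical-exponential form, which you rightly flag as indispensable.
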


From now, we consider the radical $R$ of $G$, tangent to $\cR$, a semisimple Lie subgroup $L$
(a Levi subgroup) tangent to a Levi factor. Observe that $L$ is not in general a Lie subgroup, but a "connected virtual subgroup";
 it might be dense in $G$ (cf. the Corollary and the example page 19 in \cite{onishchik}).
 We have $G= R.L$, meaning that every element $g$ of $G$ is a product $rl$, where
$r$ lies in $R$ and $l$ in $L$, and that the intersection $L \cap R$ is discrete.
The quotient $G/R$ is then isomorphic to the quotient of $L$ by a discrete central subgroup.

If $G$ is simply connected, this intersection is trivial, ie. the decomposition $g=rn$ is unique. More precisely,
$R$ and $L$ are simply connected Lie subgroups and $G = R \rtimes L$.

Let $N$ be the nilradical of $R$: it is
the maximal nilpotent normal subgroup of $G$. And
let $\Gamma$ be a uniform lattice in $G$.

It is not true in general that $\Gamma \cap R$
is a lattice in the radical $R$. Consider for example $G = \RR \times {\rm SO}(3, \RR)$: the radical
is $\RR$, but any subgroup generated by a non-trivial element is lattice, if this element has a torsion-free component in ${\rm SO}(3, \RR)$
then the lattice has trivial intersection with the radical.

However:

\begin{theorem}[Theorem\footnote{In \cite{vinberg2}, in a footnote page 107, it is observed that the proof in \cite{raghunathan} is incorrect. However, the proof has been from then completed, see \cite{wall} .}~{8.28} in \cite{raghunathan}]
\label{thm:latticeR}
Consider the adjoint action of the Levi factor $L$ on the radical $R$. Assume that the kernel of this action
contains no compact simple factor of $L$. Then, $\Gamma \cap R$ and $\Gamma \cap N$ are lattices in respectively
$R$, $N$. Moreover, the projections of $\Gamma$ in $G/R$ and the reductive group $G/N$ are lattices.
\end{theorem}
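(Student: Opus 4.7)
\medskip

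\noindent\textit{Proof sketch (plan).} The plan is to prove the theorem in three stages, with the main stage being the discreteness of the image $\bar{\Gamma} := \pi(\Gamma)$ in $G/R$, where $\pi: G \to G/R$ is the canonical projection. Once this is achieved, the rest will follow by applying Theorem~\ref{thm:latticesol} to $\Gamma\cap R$ and a standard bootstrap to $G/N$.

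First I would reduce to the case where $G$ is simply connected, by passing to the universal cover and replacing $\Gamma$ by its preimage, which is still a uniform lattice because the covering has discrete (central) kernel. Then $G = L \ltimes R$ with $L$ a Levi factor, and $N$ is normal in $G$. The key preliminary step is to show that $\overline{\Gamma R}$ is closed in $G$, equivalently that $\bar{\Gamma}$ is discrete in the semisimple Lie group $G/R$. Let $H := \overline{\Gamma R}$ and let $H_0$ be its identity component, which contains $R$. Since $H$ is normalized by $\Gamma$ and by $R$, the connected subgroup $H_0/R \subset G/R$ is normalized by $\bar{\Gamma}$; by continuity and connectedness, $H_0/R$ is actually normalized by the closure of $\bar{\Gamma}$ in $G/R$. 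The aim is to prove that $H_0 = R$.

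The main obstacle is here, and uses the hypothesis that no compact simple factor of $L$ lies in $\ker(\mathrm{Ad}: L \to \operatorname{GL}(\cR))$. The strategy is an Auslander/Borel density argument applied to the adjoint representation of $L$ on $\cG$: writing $L = L_{nc} \cdot L_c$ with $L_c$ the product of compact simple factors, the image of $\Gamma$ in the real algebraic group $\operatorname{Ad}(G)$ has, by a theorem of Auslander, a Zariski closure whose identity component is semisimple (with no compact factors acting trivially on $\cR$, thanks to the hypothesis). One shows that the Lie algebra of $H_0/R$ is an ideal of $\cG/\cR$ normalized by this closure, hence is a sum of simple factors of $\cL$. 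The finite covolume of $\Gamma$ in $G$, combined with the fact that each non-compact simple factor in $H_0/R$ would force $\bar{\Gamma}$ to accumulate at the identity in directions that are incompatible with the existence of a $\Gamma$-invariant finite measure on $\Gamma\backslash G$, rules out the non-compact part; the hypothesis on compact factors rules out the compact part. Hence $H_0 = R$ and $\bar{\Gamma}$ is discrete, and then the standard fibration $\Gamma\backslash G \to \bar{\Gamma}\backslash(G/R)$ shows both that $\bar{\Gamma}$ is a uniform lattice in $G/R$ and that $\Gamma \cap R$ is a uniform lattice in $R$.

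The last stage is now formal. Applying Theorem~\ref{thm:latticesol} to the lattice $\Gamma \cap R$ in the solvable group $R$, one gets that $(\Gamma\cap R)\cap N = \Gamma\cap N$ is a uniform lattice in $N$, and that $(\Gamma\cap R)/(\Gamma\cap N)$ projects to a uniform lattice in $R/N$. In particular $\Gamma N$ is closed in $G$, so the image $\Gamma N/N$ is a discrete subgroup of the reductive group $G/N$; since $\Gamma\backslash G$ fibers over $(\Gamma N/N)\backslash(G/N)$ with compact fibers $(\Gamma\cap N)\backslash N$, the quotient $(\Gamma N/N)\backslash(G/N)$ is compact, i.e. $\Gamma N/N$ is a uniform lattice in $G/N$. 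This completes all four conclusions. The only truly non-formal step is the density argument of the previous paragraph; this is the step where Raghunathan's original proof contained a gap later filled in \cite{wall}.
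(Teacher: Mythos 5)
First, a point of comparison: the paper does not prove this statement at all. It appears in the appendix as a quotation of Theorem~8.28 of Raghunathan's book \cite{raghunathan}, with a footnote recording that Raghunathan's own proof is flawed and was later repaired by Wu \cite{wall}. So there is no in-paper argument to measure your sketch against; the authors deliberately treat the result as an imported black box, and your task was in effect to reconstruct a proof from the literature.

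As such a reconstruction, your outline has the right skeleton: reduce to showing $\overline{\Gamma R}$ is closed (equivalently that $\bar{\Gamma}$ is discrete in $G/R$), then bootstrap via Theorem~\ref{thm:latticesol} and the fibration over $G/N$. The concluding stage is indeed formal, granting the standard facts that the nilradical of $R$ coincides with $N$ and that $\Gamma N$ is closed once $\Gamma \cap N$ is cocompact in $N$. But the decisive step --- proving $H_0 = R$ --- is exactly the step you do not carry out, and it is exactly the step where the published proof had its gap, so a gesture toward ``an Auslander/Borel density argument'' cannot be accepted here. Worse, your sketch of it is internally inconsistent: Auslander's theorem (Theorem~8.24 in \cite{raghunathan}) asserts that the identity component $H_0$ of $\overline{\Gamma R}$ is \emph{solvable}, not that some Zariski closure has semisimple identity component; if you then argue that the Lie algebra of $H_0/R$ is ``a sum of simple factors of $\cL$'', the two claims are only compatible when $H_0=R$, which is what you are trying to prove. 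The assertion that a noncompact simple factor inside $H_0/R$ ``would force $\bar{\Gamma}$ to accumulate at the identity in directions incompatible with the existence of a finite invariant measure'' is not an argument, and Borel density cannot be applied to $\bar{\Gamma}$ before its discreteness (or the finiteness of the relevant invariant measure on the quotient) is established --- which is precisely the point at issue. You should either supply the Auslander--Wu argument in full or, as the authors do, cite \cite{raghunathan} together with \cite{wall}.
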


\subsection{Cartan subalgebras in general Lie algebras}
\label{sub:CSA}
Let $\cG$ be a real Lie algebra, not necessarily semisimple. For any subalgebra $\cA$ the normalizer $\cN(\cA)$ is:
$$\fN(\cA) := \{ \op x \;/\; [\op x, \op y] \in \cA \;\; \forall \op y \in \cA \}$$

\begin{definition}[cf. \cite{humphreys}, p. 80]
\label{def:csa}
A subalgebra $\cA$ of $\cG$ is a \textbf{Cartan subalgebra} (abrev. CSA) if is is nilpotent, and equal to its own normalizer.
\end{definition}

When $\cG$ is semisimple, this definition coincide with the one given in Definition~\ref{def:cartanalg} (see \cite[Corollary 15.3]{humphreys}).

\begin{proposition}[Lemma A in section 15.4 of \cite{humphreys}]
\label{pro:projetcsa}
Let $\phi: \cG \to \cG'$ be an epimorphism of Lie algebras. Then the image by $\phi$ of any CSA of $\cG$ is a CSA of $\cG'$.
\end{proposition}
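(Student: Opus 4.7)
The plan is to use the Engel-subalgebra characterization of CSAs given in Lemma~\ref{le:engel}: a subalgebra $\mathfrak{H}\subset\cG$ is a CSA if and only if the adjoint action of $\mathfrak{H}$ on $\cG/\mathfrak{H}$ has no zero eigenvalue. Since the image of a nilpotent Lie algebra under a Lie morphism is nilpotent, $\phi(\cA)$ is nilpotent, so the remaining task is entirely spectral.

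First, I would identify $\cG'$ with $\cG/\cK$, where $\cK:=\ker\phi$. Under this identification, $\phi(\cA)$ becomes $(\cA+\cK)/\cK\cong\cA/(\cA\cap\cK)$, and the quotient $\cG'/\phi(\cA)$ becomes $\cG/(\cA+\cK)$. Through these isomorphisms, the adjoint action of an element $a\in\cA$ (viewed as $\phi(a)\in\phi(\cA)$) on $\cG'/\phi(\cA)$ is exactly the action of $\ad(a)$ on $\cG/(\cA+\cK)$, which is itself induced from the action of $\ad(a)$ on $\cG/\cA$ via the natural surjection $\cG/\cA\twoheadrightarrow\cG/(\cA+\cK)$.

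Now the spectral argument closes the proof. Since $\cA$ is a CSA of $\cG$, Lemma~\ref{le:engel} gives that the adjoint action of $\cA$ on $\cG/\cA$ admits no zero eigenvalue: one can pick $\fh\in\cA$ such that $\ad(\fh)$ on $\cG/\cA$ is invertible. By Lemma~\ref{le:spectre}, the spectrum of the induced endomorphism of $\cG/(\cA+\cK)$ is a subset of the spectrum of $\ad(\fh)$ on $\cG/\cA$, hence still does not contain $0$. Transporting back through the identifications, the element $\phi(\fh)\in\phi(\cA)$ acts invertibly on $\cG'/\phi(\cA)$, so the adjoint action of $\phi(\cA)$ on $\cG'/\phi(\cA)$ has no zero eigenvalue. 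Applying Lemma~\ref{le:engel} in the opposite direction in $\cG'$ concludes that $\phi(\cA)$ is a CSA of $\cG'$.

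The only delicate point is the last ``spectral descent'' step, but it is really just the elementary fact in Lemma~\ref{le:spectre} that a linear endomorphism of a quotient has spectrum contained in that of the original endomorphism. The rest of the argument is bookkeeping about the identifications $\cG'\cong\cG/\cK$, $\phi(\cA)\cong(\cA+\cK)/\cK$, and $\cG'/\phi(\cA)\cong\cG/(\cA+\cK)$; no normalizer computation in $\cG'$ is needed, which avoids the subtlety that a preimage of a normalizing element of $\cG'$ need not normalize $\cA$ itself in $\cG$.
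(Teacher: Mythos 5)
Your argument is correct. Note that the paper does not actually prove this statement: it is quoted directly from Humphreys (Lemma A in \S 15.4), so there is no in-paper proof to compare against; your argument is essentially the standard one from that source. Concretely, you use Lemma~\ref{le:engel} to write the CSA as an Engel subalgebra $\cA = L_0({\rm ad}\,\fh)$ for a single $\fh \in \cA$ (this is the step that genuinely requires the Engel characterization rather than just the ``no common $0$-eigenvector'' reformulation of self-normalization), observe that $\cA + \ker\phi$ is ${\rm ad}(\fh)$-invariant, and apply the spectral descent of Lemma~\ref{le:spectre} to conclude that ${\rm ad}(\phi(\fh))$ is invertible on $\cG'/\phi(\cA)$; together with the nilpotency of $\phi(\cA)$ this forces $\phi(\cA)$ to be its own normalizer, hence a CSA by Definition~\ref{def:csa}. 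All identifications ($\cG' \cong \cG/\ker\phi$, $\cG'/\phi(\cA) \cong \cG/(\cA+\ker\phi)$) are handled correctly, so the proof stands as written.
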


There is another useful characterization of CSA:

\begin{definition}
\label{def:engel}
An \textbf{Engel subalgebra} of $\cG$ is a subalgebra of the form $L_0(\mathfrak h)$, where $\mathfrak h$ is an element of $\cG$ and  where $L_0(\mathfrak h)$ denotes the $0$-characteristic
subspace of the adjoint action of $\mathfrak h$ on $\cG$.
\end{definition}

\begin{lemma}[Theorem 15.3 in \cite{humphreys}]
\label{le:engel}
A subalgebra of $\cG$ is a CSA if and only if it is a minimal Engel subalgebra.
\end{lemma}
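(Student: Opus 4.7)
The plan is to prove both directions via the following technical lemma, whose proof is by a polynomial/Zariski argument. \textbf{Key Lemma.} If $K \subset \cG$ is a subalgebra and $\fh_0 \in K$ is chosen so that $L_0(\fh_0)$ has minimal dimension in the family $\{L_0(\fh') : \fh' \in K\}$, and if $K \subset L_0(\fh_0)$, then $L_0(\fh_0) \subset L_0(\fh')$ for every $\fh' \in K$. To prove it, fix $\fh' \in K$ and consider the line $\fh_t := \fh_0 + t(\fh' - \fh_0)$, which lies in $K \subset L_0(\fh_0)$. Since $L_0(\fh_0)$ is a subalgebra, $\operatorname{ad}(\fh_t)$ preserves it; factor the characteristic polynomial of $\operatorname{ad}(\fh_t)$ on $\cG$ as $P_t(T) Q_t(T)$, with $P_t$ and $Q_t$ coming from $L_0(\fh_0)$ and from $\cG/L_0(\fh_0)$. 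At $t=0$, $P_0(T) = T^r$ with $r = \dim L_0(\fh_0)$, and $Q_0(0) \neq 0$, so $Q_t(0) \neq 0$ on a Zariski open set in $t$. There, the $0$-multiplicity of $\operatorname{ad}(\fh_t)$ on $\cG$ equals that of $P_t$, which is at most $r$; the minimality of $r$ forces equality, so $P_t(T) = T^r$ generically, hence identically (both sides are polynomial in $t$). Specializing at $t=1$, $\operatorname{ad}(\fh')|_{L_0(\fh_0)}$ is nilpotent, so $L_0(\fh_0) \subset L_0(\fh')$.

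For \emph{minimal Engel $\Rightarrow$ CSA}: write $\cA = L_0(\fh)$ and apply the Key Lemma with $K = \cA$. Minimality of $\cA$ in $\cG$ forces its minimality in $\{L_0(\fh') : \fh' \in \cA\}$, so $L_0(\fh) = L_0(\fh')$ for every $\fh' \in \cA$. Thus each $\operatorname{ad}(\fh')|_\cA$ is nilpotent and Engel's theorem makes $\cA$ nilpotent. If $\op x \in \fN(\cA)$ then $[\fh, \op x] \in \cA = L_0(\fh)$ is killed by a power of $\operatorname{ad}(\fh)$, so $\op x \in L_0(\fh) = \cA$, proving that $\cA$ is self-normalizing.

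For \emph{CSA $\Rightarrow$ minimal Engel}: first realize $\cA$ as an Engel subalgebra. Pick $\fh \in \cA$ minimizing $\dim L_0(\fh')$ over $\fh' \in \cA$; nilpotence of $\cA$ gives $\cA \subset L_0(\fh)$, and the Key Lemma yields $L_0(\fh) \subset L_0(\fh')$ for every $\fh' \in \cA$, so $\cA$ acts nilpotently on $E := L_0(\fh)$, hence on $E/\cA$. If $E \supsetneq \cA$, Engel's theorem produces $v \in E \setminus \cA$ with $[\cA, v] \subset \cA$; then $v \in \fN(\cA) = \cA$, contradiction. Hence $\cA = L_0(\fh)$ is Engel. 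For minimality, suppose $B \subsetneq \cA$ is an Engel subalgebra; applying the already-proved first direction to a minimal Engel subalgebra inside $B$ gives a CSA $\cA' \subseteq B \subsetneq \cA$. But the normalizer of $\cA'$ inside the nilpotent algebra $\cA$ strictly contains $\cA'$ (the classical ``normalizers grow'' property in nilpotent Lie algebras), and it lies inside $\fN(\cA') = \cA'$, contradiction.

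The main obstacle is the Key Lemma: the core is the polynomial/Zariski argument transferring nilpotence of $\operatorname{ad}(\fh_t)|_{L_0(\fh_0)}$ from generic $t$ to $t = 1$. Once it is in hand, both implications reduce to Engel's theorem together with the normalizers-grow property of nilpotent Lie algebras.
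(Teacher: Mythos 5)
The paper offers no proof of this lemma: it is quoted directly from Humphreys (Theorem 15.3), so there is no in-paper argument to compare against. What you have written is essentially the textbook proof -- your Key Lemma is Humphreys's Lemma 15.2, the self-normalization step is his Lemma B, and the two implications are assembled in the standard way. Most of it is correct: the polynomial/Zariski proof of the Key Lemma is sound (the ground field only needs to be infinite, which $\RR$ is), the direction CSA $\Rightarrow$ minimal Engel is complete, and the self-normalization and ``normalizers grow'' arguments are fine.

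There is, however, one step that does not follow as written. In the direction ``minimal Engel $\Rightarrow$ CSA'' you apply the Key Lemma with $K=\cA$, whose hypothesis requires $\fh$ to realize the minimal \emph{dimension} of $L_0(\fh')$ over $\fh'\in\cA$; you justify this by ``minimality of $\cA$ in $\cG$ forces its minimality in $\{L_0(\fh') : \fh'\in\cA\}$''. But ``minimal Engel subalgebra'' means minimal for \emph{inclusion}, and inclusion-minimality of $\cA$ among Engel subalgebras of $\cG$ gives no lower bound on $\dim L_0(\fh')$ for $\fh'\in\cA$, because $L_0(\fh')$ need not be contained in $\cA$. What inclusion-minimality does give for free is that $\cA=L_0(\fh)$ is inclusion-minimal \emph{within the family} $\{L_0(\fh') : \fh'\in\cA\}$, and that suffices: weaken the hypothesis of your Key Lemma from ``minimal dimension in the family'' to ``inclusion-minimal in the family'' and the same proof goes through -- for generic $t$ the condition $Q_t(0)\neq 0$ forces $L_0(\fh_t)\subseteq L_0(\fh_0)$, and inclusion-minimality (rather than a dimension count) then yields $L_0(\fh_t)=L_0(\fh_0)$, i.e. $P_t(T)=T^r$. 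With that restatement both of your applications of the Key Lemma are legitimate and the argument is complete. As a side remark, in the second direction the final contradiction can be reached more directly: once $\cA=L_0(\fh)\subseteq L_0(\fh')$ for every $\fh'\in\cA$, any Engel subalgebra $B=L_0(\op x)\subsetneq\cA$ satisfies $\op x\in B\subseteq\cA$, hence $\cA\subseteq L_0(\op x)=B$, a contradiction -- the detour through a CSA inside $B$ and the normalizer argument is not needed.
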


In particular, every Lie group contains a CSA. On the other hand, it is a classical result that if $\cG$ is the Lie algebra of an algebraic group over
an algebraically closed field, CSA in $\cG$ are all conjugate one to the other. It remains true when the field is not algebraically closed under some additional hypothesis:

\begin{theorem}[Theorem 16.2 in \cite{humphreys}]
\label{thm:csasolvable}
If $\cG$ is a solvable real\footnote{Actually, the ground field to be infinite is a sufficient condition.} Lie group, the Cartan subalgebras of $\cG$ are conjugate one to the other.
\end{theorem}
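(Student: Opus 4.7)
The plan is to follow the classical induction on $\dim \cG$, adapted to the real ground field. When $\cG$ is nilpotent, every proper subalgebra has strictly larger normalizer, so any CSA must equal $\cG$ itself and the statement is vacuous. For the inductive step, assume $\cG$ is solvable but not nilpotent, and let $\cH_1$, $\cH_2$ be two CSAs; the goal is to produce a conjugacy by an inner automorphism built as a composition of maps of the form $\exp(\op{ad} x)$ with $\op{ad} x$ nilpotent on $\cG$.

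First I would choose a minimal nonzero ideal $\cA$ of $\cG$; since $\cG$ is solvable, $\cA$ is abelian. By Proposition~\ref{pro:projetcsa} the projections $\pi(\cH_i)$ in $\cG/\cA$ are CSAs, so the induction hypothesis produces an inner automorphism of $\cG/\cA$ carrying one onto the other. Each elementary factor $\exp(\op{ad} \bar x)$ of this automorphism lifts to $\cG$ by the same formula (using any lift $x$ of $\bar x$ and noting nilpotency of $\op{ad} \bar x$ lifts to nilpotency modulo $\cA$, hence full nilpotency after enough iterations since $\cA$ is abelian). Applying this lift reduces us to the situation $\cH_1 + \cA = \cH_2 + \cA$.

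It then remains to conjugate $\cH_1$ to $\cH_2$ by a map $\exp(\op{ad} a)$ with $a \in \cA$; such a map is a genuine automorphism since $(\op{ad} a)^2 = 0$ because $\cA$ is an abelian ideal. I would decompose $\cA = \cA_0 \oplus \cA_+$ into $\cH_1$-stable real subspaces, where $\cA_0$ is the generalized zero-eigenspace of the nilpotent family $\op{ad}(\cH_1)$ acting on $\cA$, and $\cA_+$ is its natural invariant complement on which $\op{ad} h_0$ is invertible for some $h_0 \in \cH_1$. One checks $\cA_0 \subseteq \cH_1 \cap \cH_2$: any element of $\cA_0$ normalizes $\cH_1$ (since $\op{ad}(\cH_1)$ acts nilpotently on $\cA_0$) and self-normalization of $\cH_1$ forces it to lie in $\cH_1$, and symmetrically for $\cH_2$. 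Then, writing any element $h_2 \in \cH_2$ as $h_1 + a$ with $h_1 \in \cH_1$ and $a \in \cA_+$, the map $a \mapsto a$ defines (via invertibility of $\op{ad} h_0$ on $\cA_+$) the desired element conjugating $\cH_1$ onto $\cH_2$ by $\exp(\op{ad} a)$.

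The main obstacle, and the one genuine subtlety compared to the complex case, is ensuring that the weight-space decomposition $\cA = \cA_0 \oplus \cA_+$ really exists over $\RR$. Over $\CC$ one splits $\cA \otimes \CC$ as a direct sum of generalized weight spaces for the commuting nilpotent family $\op{ad}(\cH_1)$; over $\RR$, non-real weights come in complex conjugate pairs, but grouping each pair recovers an $\RR$-rational $\cH_1$-invariant subspace. Concretely, $\cA_0$ is the intersection of kernels of $(\op{ad} h)^{\dim \cA}$ over all $h \in \cH_1$, and $\cA_+$ is its stable complement described by real polynomials in $\op{ad} h$. Invertibility of $\op{ad} h_0$ on $\cA_+$ over $\RR$ follows from its invertibility over $\CC$ and the real-rationality of $\cA_+$. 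Once this real-structure issue is handled, the linear-algebra step closes the induction and yields Theorem~\ref{thm:csasolvable}.
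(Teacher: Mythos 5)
The paper does not actually prove this statement: it is imported verbatim from the literature as Theorem 16.2 of \cite{humphreys}, so there is no in-house proof to compare against, and your proposal should be judged as a reconstruction of the classical argument. Its skeleton is the right one (induction on $\dim\cG$, minimal abelian ideal $\cA$, descent to $\cG/\cA$ via Proposition~\ref{pro:projetcsa}, Fitting decomposition of $\cA$ under $\op{ad}(\cH_1)$, conjugation by $\exp(\op{ad}a)$ with $a\in\cA$), and your handling of the real ground field by pairing complex-conjugate weights is correct; the only other place where ``infinite field'' genuinely enters is the existence of $h_0\in\cH_1$ avoiding the zero sets of the finitely many nonzero weights, which you implicitly use but do not flag.

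Two steps, however, have genuine gaps. First, the lifting step: you justify that an arbitrary lift $x$ of $\bar x$ has $\op{ad}x$ nilpotent by saying that nilpotency modulo $\cA$ plus $\cA$ abelian gives ``full nilpotency after enough iterations.'' As a general principle this is false: in the two-dimensional algebra spanned by $x,a$ with $[x,a]=a$ and $\cA=\RR a$, one has $\op{ad}\bar x=0$ on $\cG/\cA$ and $\cA$ abelian, yet $\op{ad}x$ has eigenvalue $1$; knowing $(\op{ad}x)^N\cG\subseteq\cA$ gives nothing unless $\op{ad}x$ is also nilpotent on $\cA$, which does not follow. The classical fix is to generate the inner automorphism group by $\exp(\op{ad}x)$ for $x$ \emph{strongly} ad-nilpotent (i.e.\ $x\in L_c(\op{ad}y)$ for some $c\neq0$); such elements can be lifted inside the corresponding eigenspace of a lift of $y$, producing one ad-nilpotent lift, after which your abelian-ideal computation does show every other lift is ad-nilpotent. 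The existence of that first good lift is exactly what your argument omits. Second, the endgame: the classical proof distinguishes $K:=\cH_1+\cA\subsetneq\cG$ (apply induction to $K$ and extend its inner automorphisms to $\cG$) from $K=\cG$ (where minimality of $\cA$ forces $\cA_0=0$, since $\cA_0$ and $\cA_+$ are then ideals of $\cG$ contained in $\cA$). You suppress this dichotomy, and your concluding sentence --- ``the map $a\mapsto a$ defines the desired element'' --- is not an argument. What is needed is to pick $h_0'\in\cH_2$ with $\cH_2=L_0(\op{ad}h_0')$ (this is where CSA $=$ minimal Engel subalgebra is used), write $h_0'=y+c$ with $y\in\cH_1$, $c\in\cA_+$, note that $\op{ad}h_0'$ and $\op{ad}y$ agree on $\cA$ and are invertible on $\cA_+$, solve $(\op{ad}y)(a)=c$, and verify that $\exp(\op{ad}a)$ carries $L_0(\op{ad}y)=\cH_1$ onto $L_0(\op{ad}h_0')=\cH_2$. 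Relatedly, ``any element of $\cA_0$ normalizes $\cH_1$'' is not immediate from nilpotency of $\op{ad}(\cH_1)$ on $\cA_0$: the standard route is that $\cH_1+\cA_0$ is a nilpotent subalgebra, and a proper subalgebra of a nilpotent algebra is properly contained in its normalizer, contradicting self-normalization of $\cH_1$ unless $\cA_0\subseteq\cH_1$; and the inclusion $\cA_0\subseteq\cH_2$ is not obtained ``symmetrically,'' since $\cA_0$ is defined using $\cH_1$.
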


As we have already pointed out, CSA's are not unique up to conjugacy when the real Lie algebra is semisimple. However, the classification up to
conjugacy of CSA's reduces to the classification of maximal solvable subalgebras of $\cG$, ie. \textbf{Borel subalgebras}. Indeed, every CSA is contained in
a Borel subalgebra, and any CSA in a Borel subalgebra is a CSA in $\cG$. Now there is a natural 1-1 correspondence between Borel subalgebras
of $\cG$ and Borel subalgebras of the Levi factor $\cL$ (the correspondence is simply the projection). As a corollary, we get:

\begin{theorem}
\label{thm:csageneral}
Every Lie group $G$ contains a ~unique \textit{hyperbolic} CSA, ie. a CSA whose projection in the Levi factor is an hyperbolic CSA of $\cL$.
\end{theorem}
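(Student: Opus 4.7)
The plan is to combine the correspondence, recalled just before the statement, between CSAs of $\cG$ (up to conjugacy) and CSAs of the Levi factor $\cL$ (up to conjugacy), together with the uniqueness (up to conjugacy) of the hyperbolic CSA $\fh_\cL = \fa \oplus \cT$ of the semisimple algebra $\cL$ already discussed in \S~\ref{sub:ss} (where $\fa$ is any Cartan subspace and $\cT$ a maximal torus in the compact centralizer of $\fa$).

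For existence, fix such a hyperbolic CSA $\fh_\cL$ of $\cL$ and a Borel subalgebra $\cB_\cL$ of $\cL$ containing it; its preimage $\cB := p^{-1}(\cB_\cL) = \cR + \cB_\cL$ is then a Borel subalgebra of $\cG$ by the correspondence. Since $\cB$ is solvable, Theorem~\ref{thm:csasolvable} produces a CSA $\fh$ of $\cB$, and by the quoted fact any CSA of a Borel is a CSA of $\cG$. Proposition~\ref{pro:projetcsa} says that the projection $p(\fh)$ is a CSA of $\cB_\cL$; on the other hand $\fh_\cL$ is itself a CSA of $\cB_\cL$ because it is nilpotent and self-normalizing in $\cL$, hence \emph{a fortiori} in $\cB_\cL$. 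Applying Theorem~\ref{thm:csasolvable} once more to the solvable algebra $\cB_\cL$, one finds an inner automorphism of $\cB_\cL \subset \cG$ sending $p(\fh)$ onto $\fh_\cL$; this produces a CSA of $\cG$ whose image under $p$ is precisely $\fh_\cL$, hence a hyperbolic CSA of $\cG$.

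For uniqueness, let $\fh_1$ and $\fh_2$ be two hyperbolic CSAs of $\cG$. Their projections $\bar\fh_1 := p(\fh_1)$ and $\bar\fh_2 := p(\fh_2)$ are hyperbolic CSAs of the semisimple algebra $\cL$ and are therefore conjugate under an inner automorphism of $\cL$ (\S~\ref{sub:ss}); lifting this inner automorphism to $\cG$ through the embedding of the chosen Levi factor, one may assume $\bar\fh_1 = \bar\fh_2 =: \bar\fh$. Both $\fh_i$ are then contained in $p^{-1}(\bar\fh) = \cR + \bar\fh$, which is solvable as an extension of the nilpotent algebra $\bar\fh$ by the solvable ideal $\cR$. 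Each $\fh_i$ is a CSA of $\cR + \bar\fh$: nilpotency is inherited, and any normalizer in $\cR + \bar\fh$ is contained in the normalizer in $\cG$, which equals $\fh_i$ itself. Theorem~\ref{thm:csasolvable} applied to $\cR + \bar\fh$ now yields an element conjugating $\fh_1$ to $\fh_2$ in $\cG$.

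The main obstacle is the bookkeeping with the Levi decomposition: one must verify at each step that the conjugations appealed to (inside $\cB_\cL$, inside the Borel $\cB$ of $\cG$, and inside $\cR + \bar\fh$) come from inner automorphisms of the ambient $\cG$, so that the final conjugacy between $\fh_1$ and $\fh_2$ is genuinely a $\cG$-conjugacy. This is routine once one works consistently with a fixed Levi factor embedded in $\cG$ via Theorem~\ref{thm:levi}.
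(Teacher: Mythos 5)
Your proposal is correct and follows essentially the same route as the paper, which states the theorem as a corollary of the preceding discussion (every CSA lies in a Borel subalgebra, CSAs of a Borel are CSAs of $\cG$ and are conjugate by Theorem~\ref{thm:csasolvable}, and Borel subalgebras of $\cG$ correspond to Borel subalgebras of $\cL$ via the projection); your writeup simply supplies the details the paper leaves implicit. The only variation is that for uniqueness you work inside the solvable algebra $p^{-1}(\bar{\mathfrak h}) = \cR + \bar{\mathfrak h}$ rather than inside a Borel subalgebra, which is a harmless (indeed slightly cleaner) instance of the same reduction to Proposition~\ref{pro:projetcsa}, Theorem~\ref{thm:csasolvable}, and the conjugacy of hyperbolic CSAs in the semisimple case.
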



\end{document}